\pdfoutput=1
\RequirePackage{ifpdf}
\ifpdf 
\documentclass[pdftex]{sigma}
\else
\documentclass{sigma}
\fi

\newcommand{\C}{\mathbb{C}}

\newcommand{\Z}{\mathbb{Z}}
\newcommand{\K}{\mathbb{K}}
\newcommand{\bB}{\mathbf{B}}
\newcommand{\bD}{\mathbf{D}}
\newcommand{\bG}{\mathbf{G}}
\newcommand{\bH}{\mathbf{H}}
\newcommand{\bK}{\mathbf{K}}
\newcommand{\bO}{\mathbf{O}}
\newcommand{\bT}{\mathbf{T}}
\newcommand{\osp}{\mathfrak{osp}}
\newcommand{\fsl}{\mathfrak{sl}}
\newcommand{\fgl}{\mathfrak{gl}}
\newcommand{\bsigma}{\mathbf{\sigma}}
\newcommand{\bx}{{\boldsymbol{x}}}

\newcommand{\GP} {\bG_{{}_{\cP}}}

\newcommand{\Ker}{\mathrm{Ker}}

\newcommand{\Zbsigma}{{\Z[\bsigma]}}
\newcommand{\Zbx}{{\Z[\bx]}}

\newcommand{\Cbx}{{\C[\bx]}}
\newcommand {\sqbullet} {{\scriptstyle \blacksquare}}

\numberwithin{equation}{section}

\newtheorem{thm}{Theorem}[section]
\newtheorem{prop}[thm]{Proposition}

\theoremstyle{definition}
\newtheorem{Remark}[thm]{Remark}
\newtheorem{free text}{}[subsection]

\newcommand{\zero} {{\bar{0}}}
\newcommand{\one} {{\bar{1}}}
\newcommand{\fg}{\mathfrak{g}}
\newcommand{\fh}{\mathfrak{h}}
\newcommand{\fa}{\mathfrak{a}}
\newcommand{\fb}{\mathfrak{b}}
\newcommand{\fc}{\mathfrak{c}}
\newcommand{\fd}{\mathfrak{d}}

\newcommand{\fS}{\mathfrak{S}}

\newcommand{\fgcpx} {\fg_{{}_{\C(x)}}}

 \newcommand{\fgzx} {\fg_{{}_\Z}(\boldsymbol{x})}
 \newcommand{\fgpzx} {\fg'_{{}_\Z}(\boldsymbol{x})}
 \newcommand{\fgszx} {\fg''{{}_\Z}(\boldsymbol{x})}

 \newcommand{\fgs} {\fg(\bsigma)}

 \newcommand{\fgps} {\fg'(\bsigma)}
 \newcommand{\fgss} {\fg''(\bsigma)}
 
 \newcommand{\fgpsc} {{\fg'(\bsigma)}_{{}_\C}}

 \newcommand{\fgx} {\fg(\bx)}
 \newcommand{\fgpx} {\fg'(\bx)}
 \newcommand{\fgsx} {\fg''(\bx)}

\newcommand{\fgzs} {\fg_{{}_\Z}(\bsigma)}
 \newcommand{\fgpzs} {\fg'_{{}_\Z}(\bsigma)}
 \newcommand{\fgszs} {\fg''_{{}_\Z}(\bsigma)}

\newcommand{\Hom}{{\mathrm {Hom}}}

\newcommand{\vep}{\varepsilon}

\newcommand{\cat}[1]{{\boldsymbol{\mathsf{#1}}}} 
 \newcommand{\alg} {\text{\rm ($\cat{alg}$)}}
 \newcommand{\Walg} {\text{\rm ($\cat{Walg}$)}}
 \newcommand{\salg} {\text{\rm ($\cat{salg}$)}}
 \newcommand{\Wsalg} {\text{\rm ($\cat{Wsalg}$)}}
 
 \newcommand{\lie} {\text{\rm ($\cat{Lie}$)}}
 \newcommand{\sLie} {\text{\rm ($\cat{sLie}$)}}
 \newcommand{\sHCp} {\text{\rm ($\cat{sHCp}$)}}

\newcommand{\cA}{\mathcal{A}}
\newcommand{\cB}{\mathcal{B}}

\newcommand{\cH}{\mathcal{H}}

\newcommand{\cL}{\mathcal{L}}
\newcommand{\cO}{\mathcal{O}}
\newcommand{\cI}{\mathcal{I}}
\newcommand{\cP}{\mathcal{P}}
\newcommand{\cS}{\mathcal{S}}

\newcommand{\cW}{\mathcal{W}}

\begin{document}
\allowdisplaybreaks

\newcommand{\arXivNumber}{1709.04717}

\renewcommand{\PaperNumber}{137}

\FirstPageHeading

\ShortArticleName{Singular Degenerations of Lie Supergroups of Type $D(2,1;a)$}

\ArticleName{Singular Degenerations of \\ Lie Supergroups of Type $\boldsymbol{D(2,1;a)}$}

\Author{Kenji IOHARA~$^\dag$ and Fabio GAVARINI~$^\ddag$}

\AuthorNameForHeading{K.~Iohara and F.~Gavarini}

\Address{$^\dag$~Univ Lyon, Universit\'{e} Claude Bernard Lyon 1, CNRS UMR 5208, Institut Camille Jordan,\\
\hphantom{$^\dag$}~43 Boulevard du 11 Novembre 1918, F 69622 Villeurbanne Cedex, France}
\EmailD{\href{mailto:iohara@math.univ-lyon1.fr}{iohara@math.univ-lyon1.fr}}
\URLaddressD{\url{http://math.univ-lyon1.fr/~iohara/}}

\Address{$^\ddag$~Dipartimento di Matematica, Universit\`a di Roma ``Tor Vergata'',\\
\hphantom{$^\ddag$}~Via della ricerca scientifica 1, I-00133 Roma, Italy}
\EmailD{\href{mailto:gavarini@mat.uniroma2.it}{gavarini@mat.uniroma2.it}}

\ArticleDates{Received October 31, 2017, in final form December 11, 2018; Published online December 31, 2018}

\Abstract{The complex Lie superalgebras $\fg$ of type $D(2,1;a)$~-- also denoted by $\osp(4,2;a) $~-- are usually considered for ``non-singular'' values of the parameter $a$, for which they are simple. In this paper we introduce five suitable integral forms of $\fg$, that are well-defined at singular values too, giving rise to ``singular specializations'' that are no longer simple: this extends the family of {\it simple} objects of type $D(2,1;a)$ in five different ways. The resulting five families coincide for general values of $ a$, but are different at ``singular'' ones: here they provide non-simple Lie superalgebras, whose structure we describe explicitly. We also perform the parallel construction for complex Lie supergroups and describe their singular specializations (or ``degenerations'') at singular values of $a$. Although one may work with a~single complex parameter $a$, in order to stress the overall $\fS_3$-symmetry of the whole situation, we shall work (following Kaplansky) with a two-dimensional parameter $\bsigma = (\sigma_1,\sigma_2,\sigma_3)$ ranging in the complex affine plane $\sigma_1 + \sigma_2 + \sigma_3 = 0$.}

\Keywords{Lie superalgebras; Lie supergroups; singular degenerations; contractions}

\Classification{14A22; 17B20; 13D10}

\vspace{-5mm}

{\setcounter{tocdepth}{2}
\tableofcontents}

\vspace{-2mm}

\section{Introduction}

{\sloppy In the classification of simple finite-dimensional complex Lie superalgebras~-- due to Kac (cf.~\cite{K1})~-- a special one-parameter family occurs, whose elements~$\fg_a$ depend on a parameter $a \in \C \setminus \{0,-1\}$. These are ``generically non-isomorphic'', and all isomorphisms between them are encoded in a~free action of the symmetric group $ \fS_3 $ on the family $\{\fg_a\}_{a \in \C \setminus \{0,-1\}}$. It was pointed out in~\cite{K1} that the Cartan matrix $ A = \left(\begin{smallmatrix}
 2 & -1 & 0 \\
 -1 & 0 & a \\
 0 & -1 & 2 \end{smallmatrix} \right)$ used to define this Lie superalgebra had already appeared in~\cite{VK}, as a Cartan matrix of a one-parameter family of $16$-dimensional simple Lie algebras over a field $ k $ of characteristic $ 2 $ with $ a \in k \setminus \{0,1\}$.

 }

 For any $ a \in \{ 1, -2, -1/2\} $ one has $ \fg_1 \cong \osp(4,2) $, which is of type $ D(2,1) $: thus Kac called each $ \fg_a $ to be ``{\it of type $ D(2,1;a) $}''~-- while $ D(m,n) $ is the type of the orthosymplectic Lie superalgebra~$ \osp(2m,2n) $. For the same reason, some authors, for example \cite{ccll}~-- cf.\ also~\cite{bgl}~-- use instead notation $ \osp(4,2;a) $.
 By general theory, one can complete each of the (simple) Lie superalgebras~$ \fg_a $ and form a so-called super Harish-Chandra pair: and then one associates to the latter a corresponding complex Lie supergroup, say $ \bG_a $, whose tangent Lie superalgebra is~$ \fg_a $~-- as prescribed in Kac' classification of simple algebraic supergroups, cf.~\cite{K2}. All these~$ \bG_a $'s form a family $ \{\bG_a\}_{a \in \C \setminus \{0,-1\}} $, which bears a free $ \fS_3 $-action that induces the $ \fS_3 $-action on~$ \{\fg_a\}_{a \in \C \setminus \{0,-1\}} $. The starting point of the present paper is the following question: can we ``take the limit'' (in some sense) of $ \fg_a $ for $a$ approaching to the ``singular values'' $ a = 0 $ and $ a = -1 $? And if yes, what is the structure of the resulting ``limit'' Lie superalgebra? Similarly, we raise the same questions for the family of the supergroups~$ \bG_a $.

 In this article, we show that there are several ways to answer, in the positive, these questions. In fact, we present {\it five} possible ways to complete the family of simple Lie superalgebras $ D(2,1;a) $ with additional Lie superalgebras for the ``singular values'' $ a \in \{0,-1\} $. Each one of these new, extra objects can be thought of as a ``limit'' of the older ones; however, the existence of different options show that such ``limits'' have no intrinsic meaning, but strongly depend on some choice~-- roughly, on ``how you approach the singular point''. For each of these choices, the corresponding new objects that are ``limits'' of the (original) simple Lie superalgebras $ D(2,1;a) $ happen to be non-simple, and we describe explicitly their structure, which is different for the different choices. Therefore, we extend the old family $\{ \fg_a = \osp(4,2;a)\}_{a \in \C \setminus \{0,-1\}} $ of simple Lie superalgebras to five larger families, indexed by the points of $ \mathbb{P}^1(\C) \cup \{\ast\} $, whose elements at ``non-singular values'' $ a \in \{ 0, -1, \infty, \ast \} $ are {\it non-simple}~-- which is why we call them ``degenerations''~-- and (when comparing one family with a different one) non-isomorphic.

 By the way, our analysis is by no means exhaustive: one can still provide further ways to complete the family of the simple $ \fg_a $'s (for non-singular values of $ a $) by adding some extra objects at singular values of $ a $, right in the same spirit but with different outcomes. Our goal here is only to explain the {\it existence} and {\it non-uniqueness} of such constructions.
A few words about our construction. First, instead of working with Lie superalgebras $ \fg_a $ indexed by a single parameter $ a \in \C \setminus \{0,-1\} $~-- later extended to $ a \in \C $~-- we rather deal with a multiparameter $ \bsigma \in V := \big\{ (\sigma_1,\sigma_2,\sigma_3) \in \C^3 \,\big|\, \sum_i \sigma_i = 0 \big\} $. The starting point is a construction~-- due to Kaplansky, cf.~\cite{Kap1}; see also~\cite{Sc}~-- that for each $ \sigma \in V $ provides a Lie superalgebra $ \fg_\bsigma $: this yields a full family of Lie superalgebras $ {\big\{ \fg_\bsigma \big\}}_{\bsigma \in V} $, forming a bundle over $ V $, naturally endowed with an action of the group $ \mathcal{G} := \C^\times \times \fS_3 $ via Lie superalgebra isomorphisms. For each $ \bsigma $ in the ``general locus'' $ V^\times := V \setminus \big( \bigcup_{ i=1}^{ 3} \{\sigma_i=0\} \big) $ we have $ \fg_\bsigma \cong \fg_a $ for some $ a \in \C \setminus \{0,-1\} $ so the original family $\{ \fg_a = \osp(4,2;a)\}_{a \in \C \setminus \{0,-1\}} $ of simple Lie superalgebras
is taken into account; in addition, the $ \fg_\bsigma $'s are well-defined also at singular values $ \bsigma \in V \cap \big( \bigcup_{ i=1}^{ 3} \{\sigma_i=0\} \big) $, but there they are non-simple instead.

Thus Kaplansky's family of Lie superalgebras provides a first solution to our problem. In addition, we re-visit this construction and devise five recipes to construct similar families, as follows. For $ \bsigma \in V^\times $, we fix in $ \fg_\bsigma $ a particular $ \C $-basis, call it $B$, in such a way that the structure constants are polynomials in $ \bsigma $. When we replace $ \bsigma = (\sigma_1, \sigma_2, \sigma_3) $ with a formal parameter $ \bx = (x_1, x_2, x_ 3) $, the previous multiplication table defines a Lie superalgebra structure on the free $ \Cbx $-module with basis $ B $, denoted by $ \fg_{{}_B}(\bx) $. Then for each $ \bsigma = (\sigma_1, \sigma_2, \sigma_3) \in V $ the quotient $ \fg_{{}_B}(\bsigma) := \fg_{{}_B}(\bx) / {(x_i - \sigma_i)}_{i=1,2,3} \fg_{{}_B}(\bx) $ is a well-defined complex Lie super\-algebra, such that $ \fg_{{}_B}(\bsigma) \cong \fg_\bsigma $ for $ \bsigma \in V^\times $; thus we get a whole family $\{ \fg_{{}_B}(\bsigma)\}_{\bsigma \in V} $ as requested, that actually depends on the choice of the basis $ B $. We present five explicit examples that give rise to five different outcomes~-- one being Kaplansky's family. Indeed, at each point of the ``singular locus'' $ V \cap \big( \bigcup_{ i=1}^{ 3} \{\sigma_i=0\} \big) $ these families present different (non-isomorphic) non-simple Lie superalgebras, that we describe in detail. As a second contribution, we perform a~parallel construction at the level of Lie supergroups: namely, for each $ \bsigma \in V $ we ``complete'' each Lie superalgebra $ \fg_{{}_B}(\bsigma) $ to form a super Harish-Chandra pair, and then take the corresponding (complex holomorphic) Lie supergroup. This yields a family $\{ \bG_{{}_B}(\bsigma)\}_{\bsigma \in V} $ of Lie supergroups, with $ \bG_\bsigma $ isomorphic to $ \bG_a $ for a suitable $ a \in \C \setminus \{0,-1\} $ for non-singular values of $ \bsigma $, while $ \bG_{{}_B}(\bsigma) $ is not simple for singular values instead; moreover, the group $ \mathcal{G} := \C^\times \times \fS_3 $ freely acts on this family via Lie supergroup isomorphisms. In other words, we complete the ``old'' family of the simple Lie supergroups $ \bG_a $'s (isomorphic to suitable $ \bG_\bsigma $'s) by suitably adding new, non-simple Lie supergroups at singular values of $ \bsigma $. The construction depends on $ B $, and with our five, previously fixed choices we find five different families: for each of them, we describe explicitly the non-simple supergroups $ \bG_\bsigma $ at singular values of $ \bsigma $~-- which are referred to as ``degenerations'' of the (previously known, simple) $ \bG_a $'s.

 This analysis might be reformulated in the language of {\it deformation theory} of supermanifold~-- e.g., as treated in~\cite{va}. However, this goes beyond the scope of the present article.
 This article is organized as follows. In Section~\ref{sect_preliminaires}, we briefly recall the basic algebraic background necessary for this work, in particular, some language about {\it supermathematics}. In Section~\ref{sect: Lie-superalg_osp(4,2;s)}, we introduce our Lie superalgebras $ \fg_\bsigma = \osp(4,2;\bsigma) $. Several {\it integral forms} of the Lie superalgebra~$ \fg_\bsigma $ are introduced in Section~\ref{forms-degen.s - Lie s-alg.s}. In particular, as an application, the structure of their {\it singular degenerations} is studied in detail (Theorems \ref{thm_g-spec}, \ref{thm_g'-spec}, \ref{thm_g''-spec}, \ref{thm_g_eta-spec} and \ref{thm_g'_eta-spec}). Section~\ref{section5} is the last highlight of this paper: we introduce and analyze the Lie supergroups whose Lie superalgebras are studied in Section~\ref{forms-degen.s - Lie s-alg.s}, and we describe the (non-simple) structure of their degenerations~-- i.e., the member of the families at singular values of $ \bsigma $ (Theorems \ref{thm_G-spec}, \ref{thm_G'-spec}, \ref{thm_G''-spec}, \ref{thm_Ghat-spec} and \ref{thm_Ghat'-spec}).

 As the main objects treated in this article have many special features, most of the above descriptions are given in a down-to-earth manner, so that even the readers who are not familiar with the subject could follow easily our exposition.

\section{Preliminaries} \label{sect_preliminaires}

 In this section, we recall the notions and language of Lie superalgebras and Lie supergroups. Our purpose is to fix the terminology, but everything indeed is standard matter.

\subsection{Basic superobjects} \label{subsec:basic-sobjcs}

\looseness=-1 All throughout the paper, we work over the field $ \C $ of complex numbers (nevertheless, immediate generalizations are possible), unless otherwise stated. By {\it $ \C $-supermodule}, or {\it $ \C $-super vector space}, any $ \C $-module $ V $ endowed with a $ \Z_2 $-grading $ V = V_\zero \oplus V_\one $, where $ \Z_2 = \{\zero,\one\} $ is the group with two elements. Then $ V_\zero $ and its elements are called {\it even}, while $ V_\one $ and its elements {\it odd}. By $ |x| (\in \Z_2) $ we denote the {\it parity} of any non-zero homogeneous element, defined by the condition $ x \in V_{|x|} $.

We call {\it $ \C $-superalgebra} any associative, unital $ \C $-algebra $ A $ which is $ \Z_2 $-graded: so $ A $ has a $ \Z_2 $-grading $ A = A_\zero \oplus A_\one $, and $ A_{\mathbf{a}} A_{\mathbf{b}} \subseteq A_{\mathbf{a}+\mathbf{b}} $. Any such $ A $ is said to be {\it commutative} if $ x y = (-1)^{|x| |y|} y x $ for all homogeneous $ x, y \in A $; so, in particular, $ z^2 = 0 $ for all $ z \in A_\one $. All $ \C $-superalgebras form a category, whose morphisms are those of unital $ \C $-algebras preserving the $ \Z_2 $-grading; inside it, commutative $ \C $-superalgebras form a subcategory, that we denote by~$ \salg $. We denote by $ \alg $ the category of (associative, unital) commutative $ \C $-algebras, and by $ {(\text{\bf mod})} $ that of $ \C $-modules. Note also that there is an obvious functor $ {(\ )}_\zero \colon \salg \longrightarrow \alg $ given on objects by $ A \mapsto A_\zero $. We call {\it Weil superalgebra} any finite-dimensional commutative $ \C $-superalgebra $ A $ such that $ A = \C \oplus \mathfrak{N}(A) $ where $ \C $ is even and $ \mathfrak{N}(A) = {\mathfrak{N}(A)}_\zero \oplus {\mathfrak{N}(A)}_\one $ is a~$ \Z_2 $-graded nilpotent ideal (the {\it nilradical} of~$ A $). Every Weil superalgebra $ A $ is endowed with a~canonical epimorphisms $ p_A\colon A \relbar\joinrel\relbar\joinrel\relbar\joinrel\twoheadrightarrow \C $ and an embedding $ u_A \colon \C \lhook\joinrel\longrightarrow A $, such that \mbox{$ p_A \circ u_A = {\rm id}$}. Weil superalgebras over $ \C $ form a full subcategory of $ \salg $, denoted by~$ \Wsalg $. Finally, let $ \Walg := \Wsalg \cap \alg $ be the category of {\it Weil algebras $($over~$ \C )$}, i.e., the full subcategory of all totally even objects in $ \Wsalg $~-- namely, those whose odd part is trivial. Then the functor $ {(\ )}_\zero \colon \salg \longrightarrow \alg $ obviously restricts to a similar functor $ {(\ )}_\zero \colon \Wsalg \longrightarrow \Walg $ given again by $ A \mapsto A_\zero $.

\subsection{Lie superalgebras} \label{subsec:Lie-superalgebras}

 By definition, a {\it Lie superalgebra} is a $ \C $-supermodule $ \fg = \fg_\zero \oplus \fg_\one $ with a {\it $($Lie super$)$bracket} $ [ \cdot, \cdot ] \colon \fg \times \fg \longrightarrow \fg $, $ (x,y) \mapsto [x,y] $, which is $ \C $-bilinear, preserving the $ \Z_2 $-grading and satisfies the following (for all homogenenous $ x, y, z \in \fg $):
\begin{enumerate}\itemsep=0pt
\item[(a)] $ [x,y] + {(-1)}^{|x| |y|}[y,x] = 0 $ {\it $($anti-symmetry$)$};
\item[(b)] ${(-1)}^{|x| |z|} [x, [y,z]] + {(-1)}^{|y| |x|} [y, [z,x]] + {(-1)}^{|z| |y|} [z, [x,y]] = 0 $ {\it $($Jacobi identity$)$}.
\end{enumerate}
 In this situation, we write $ Y^{\langle 2 \rangle} := 2^{-1} [Y,Y]$ $(\!\in \fg_\zero) $ for all $ Y \in \fg_\one $. All Lie $ \C $-superalgebras form a category, denoted by $ \sLie $, whose morphisms are $ \C $-linear, preserving the $ \Z_2 $-grading and the bracket. Note that if $ \fg $ is a Lie $ \C $-superalgebra, then its even part $ \fg_\zero $ is automatically a~Lie $ \C $-algebra.

Lie superalgebras can also be described in functorial language. Indeed, let $ \lie $ be the category of Lie $ \C $-algebras. Then every Lie $ \C $-superalgebra $ \fg \in \sLie $ defines a functor
\begin{gather*} \cL_\fg \colon \ \Wsalg \relbar\joinrel\relbar\joinrel\longrightarrow \lie, \qquad A \mapsto \cL_\fg(A) := ( A \otimes \fg )_\zero = (A_\zero \otimes \fg_\zero)\oplus (A_\one \otimes \fg_\one).
\end{gather*}
 Indeed, $ A \otimes \fg $ is a Lie superalgebra (in a suitable, more general sense, over $ A $) on its own, with Lie bracket
$[ a \otimes X, a' \otimes X'] := {(-1)}^{|X| |a'|} a a' \otimes [X,X'] $; now $ \cL_\fg(A) $ is the even part of $ A \otimes \fg $, hence it is a Lie algebra on its own.

\subsection{Lie supergroups} \label{subsec:Lie-sgrps - LONG}

 We shall now recall, in steps, the notion of complex holomorphic ``Lie supergroups'', as a special kind of ``supermanifold''. The following is a very concise summary of a long, detailed theory: further details are, for instance, in \cite{bcf, dm, va}.

\begin{free text} \label{supermanifolds}
{\bf Supermanifolds.} By {\it superspace} we mean a pair $ S = \big( |S|, \cO_S \big) $ of a topological space~$ |S| $ and a sheaf of commutative superalgebras $ \cO_S $ on it such that the stalk $ \cO_{S,x} $ of $ \cO_S $ at each point $ x \in |S| $ is a local superalgebra. A {\it morphism} $ \phi \colon S \longrightarrow T $ between superspaces $ S $ and $ T $ is a~pair $( |\phi|, \phi^*) $ where $ |\phi| \colon |S| \longrightarrow |T| $ is a continuous map of topological spaces and the induced morphism $ \phi^* \colon \cO_T \longrightarrow |\phi|_* (\cO_S ) $ of sheaves on $ |T| $ is such that $ \phi_x^*(\mathfrak{m}_{|\phi|(x)}) \subseteq \mathfrak{m}_x $, where $ \mathfrak{m}_{|\phi|(x)} $ and $ \mathfrak{m}_{x} $ denote the maximal ideals in the stalks $ \cO_{T,|\phi|(x)} $ and $ \cO_{S,x} $, respectively.

 As basic model, the {\it linear supervariety} $\mathcal{C}_\C^{ p|q} $ (in Leites' terminology) is, by definition, the topological space $ \C^p $ endowed with the following sheaf of commutative superalgebras: $ \cO_{\mathcal{C}_\C^{p|q}}(U) := \cH_{\C^p}(U) \otimes_\C \Lambda_\C(\xi_1,\dots,\xi_q) $ for any open set $ U \subseteq \C^p $, where $ \cH_{\C^p} $ is the sheaf of holomorphic functions on $ \C^p $ and $ \Lambda_\C(\xi_1,\dots,\xi_q) $ is the complex Grassmann algebra on $ q $ variables $ \xi_1 $, $ \dots $, $ \xi_q $ of {\it odd} parity.
A~{\it $($complex holomorphic$)$ supermanifold} of (super)dimension $ p|q $ is a superspace $ M = ( |M|, \cO_M ) $ such that $ |M| $ is Hausdorff and second-countable and $ M $ is locally isomorphic to $ \mathcal{C}_\C^{ p|q} $, i.e., for each $ x \in |M| $ there is an open set $ V_x \subseteq |M| $ with $ x \in V_x $ and $ U \subseteq \C^p $ such that $ \cO_M{|}_{V_x} \cong \cO_{\mathcal{C}_\C^{p|q}}{|}_U $ (in particular, it is locally isomorphic to $ \mathcal{C}_\C^{ p|q} $). A {\it morphism} between holomorphic supermanifolds is just a morphism (between them) as superspaces.

 We denote the category of (complex holomorphic) supermanifolds by ($\cat{hsmfd}$).

Let now $ M $ be a holomorphic supermanifold and $ U $ an open subset in~$ |M| $. Let $ \cI_M(U) $ be the (nilpotent) ideal of $ \cO_M(U) $ generated by the odd part of the latter: then $ \cO_M \big/ \cI_M $ defines a sheaf of purely even superalgebras over $ |M| $, locally isomorphic to $ \cH_{\C^p} $. Then $ M_{\rm rd} := ( |M|, \cO_M / \cI_M ) $ is a {\it classical} holomorphic manifold, called the {\it underlying holomorphic $($sub$)$manifold} of $ M $; the standard projection $ s \mapsto \tilde{s} := s + \cI_M(U) $ (for all $ s \in \cO_M(U) $) at the sheaf level yields an embedding $ M_{\rm rd} \longrightarrow M $, so $ M_{\rm rd} $ can be seen as an embedded sub(super)manifold of $ M $. The whole construction is clearly functorial in~$ M $.

Finally, each ``classical'' manifold can be seen as a ``supermanifold'', just regarding its structure sheaf as one of superalgebras that are actually {\it totally even}, i.e., with trivial odd part. Conversely, any supermanifold enjoying the latter property is actually a manifold, nothing more. In other words, every manifold identify with a supermanifold $ M $ that actually coincides with its underlying (sub)manifold~$ M_{\rm rd} $.
\end{free text}

\begin{free text} \label{Lie supergroups - functors} {\bf Lie supergroups and the functorial approach.} A {\it group object} in the category ($\cat{hsmfd}$) is called {\it $($complex holomorphic$)$ Lie supergroup}. These objects, together with the obvious morphisms, form a subcategory among supermanifolds, denoted ($\cat{Lsgrp}$).

 Lie supergroups~-- as well as supermanifolds~-- can also be conveniently studied via a functorial approach that we now briefly recall (cf.~\cite{bcf} or~\cite{Ga3} for details).
Let $ M $ be a supermanifold. For every $ x \in |M| $ and every $ A \in \Wsalg $ we set $ M_{A,x} = \Hom_\salg ( \cO_{M,x}, A ) $ and $ M_A = \bigsqcup_{ x \in |M|} M_{A,x} $; then we define $ \cW_M \colon \Wsalg \longrightarrow \text{\rm ($\cat{set}$)} $ to be the functor given by $ A \mapsto M_A $ and $ \rho \mapsto \rho^{(M)} $ with $ \rho^{(M)} \colon M_A \longrightarrow M_B $, $ x_A \mapsto \rho \circ x_A $. Overall, this provides a functor $ \cB \colon \text{\rm ($\cat{hsmfd}$)} \longrightarrow [\Wsalg,\text{\rm ($\cat{set}$)}] $ given on objects by $ M \mapsto \cW_M $; we can now refine still more.

 Given a finite dimensional commutative algebra $ A_\zero $ over $ \C $, a (complex holomorphic) {\it $ A_\zero $-manifold} is any manifold that is locally modelled on some open subset of some finite dimensional $ A_\zero $-module, so that the differential of every change of charts is an $ A_\zero $-module isomorphism. An {\it $ A_\zero $-morphism} between two $ A_\zero $-manifolds is any smooth morphism whose differential is everywhere $ A_\zero $-linear (we then say that ``it is $ A_\zero $-smooth''). Gathering all $ A_\zero $-manifolds (for all possible~$ A $), and suitably defining morphisms among them, one defines the category $ \text{\rm ($\mathcal{A}_\zero\text{-}\cat{hmfd}$)} $ of all ``$ \mathcal{A}_\zero $-manifolds''.

A key point now is that {\it each $ \cW_M $ turns out to be a functor from $ \Wsalg $ into $ \text{\rm ($\mathcal{A}_\zero\text{-}\cat{hmfd}$)} $}.
Furthermore, let $[[ \Wsalg, \text{\rm ($\mathcal{A}_\zero\text{-}\cat{hmfd}$)} ]] $ be the subcategory of $ \big[ \Wsalg, \text{\rm ($\mathcal{A}_\zero\text{-}\cat{hmfd}$)} \big] $ with the same objects but whose morphisms are all natural transformations $ \phi \colon \mathcal{G} \relbar\joinrel\longrightarrow \mathcal{H} $ such that for every $ A \in \Wsalg $ the induced $ \phi_A \colon \mathcal{G}(A) \relbar\joinrel\longrightarrow \mathcal{H}(A) $ is $ A_\zero $-smooth. Then {\it the second key point is that if $ \phi \colon M \relbar\joinrel\longrightarrow N $ is a morphism of supermanifolds, then $ \phi_A $ is a morphism in $[[ \Wsalg, \text{\rm ($\mathcal{A}_\zero\text{-}\cat{hmfd}$)} ]] $}. The final outcome is that we have a functor $ \cS \colon \text{\rm ($\cat{hsmfd}$)} \longrightarrow [[ \Wsalg$, $\text{\rm ($\mathcal{A}_\zero\text{-}\cat{hmfd}$)} ]] $,
given on objects by $ M \mapsto \cW_M $; the key result is that {\it this embedding is full and faithful}, so that for any two supermanifolds $ M $ and $ N $ one has {\it $ M \cong N $ if and only if $ \cS(M) \cong \cS(N) $, i.e., $ \cW_M \cong \cW_N $}.

 Still relevant to us, is that the embedding $ \cS $ preserves products, hence also group objects. Therefore, {\it a supermanifold $ M $ is a {\it Lie supergroup} if and only if $ \cS(M) := \cW_M $ takes values in the subcategory $($among $ \cA_\zero $-manifolds$)$ of group objects}~-- thus each $ \cW_M(A) $ is a group.

Finally, in the functorial approach the ``classical'' manifolds (i.e., totally even supermanifolds) can be recovered as follows: in the previous construction one simply has to replace the words ``Weil superalgebras'' with ``Weil algebras'' everywhere. It then follows, in particular, that the functor of points $ \cW_{\mathcal{M}} $ of any holomorphic, manifold $ \mathcal{M} $ is actually a functor from $ \Walg $ to $ \text{\rm ($ \mathcal{A}_\zero\text{-}\cat{hmfd} $)} $; one can still see it as (the functor of points of) a {\it super}manifold~-- that is totally even, though~-- by composing it with the natural functor $ {(\ )}_\zero \colon \Wsalg \longrightarrow \Walg $. On the other hand, given any supermanifold $ M $, say holomorphic, the functor of points of its underlying submanifold $ M_{\rm rd} $ is given by $ \cW_{M_{\rm rd}}(\cA) = \cW_M(\cA) $ for each $ \cA \in \Walg $, or in short $ \cW_{M_{\rm rd}} = \cW_M{\big|}_\Walg $.

Finally, it is worth stressing that the functorial point of view on supermanifolds was originally developed~-- by Leites, Berezin, Deligne, Molotkov, Voronov and many others~-- in a slightly different way. Namely, they considered functors defined, rather than on Weil superalgebras, on {\it Grassmann $($super$)$algebras}. Actually, the two approaches are equivalent: see~\cite{bcf} for a detailed, critical analysis of the matter.

There are some advantages in restricting the focus onto Grassmann algebras. For instance, they are the superalgebras of global sections onto the superdomains of dimension $ 0|q $~-- i.e., ``super-points''. Therefore, if $ M $ is a supermanifold considered as a super-ringed space, its description via a functor defined on Grassmann algebras (only) can be really seen as the true restriction of the functor of points of $ M $, considered as a super-ringed space.

On the other hand, the use of Weil superalgebras has the advantage that one can use it to perform differential calculus on functors $ \cW_M$, much in the spirit of Weil's approach to differential calculus in algebraic geometry. Note also that some peculiar properties for Grassmann algebras are still available for every Weil superalgebra $ A $: e.g., the existence of the maps $ p_A \colon A \relbar\joinrel\relbar\joinrel\twoheadrightarrow \C $ and $ u_A \colon \C \lhook\joinrel\relbar\joinrel\longrightarrow A $, key tools in the theory (for instance, for any Lie supergroup $ G $ this implies the existence of a semidirect product splitting of the group $ G(A) $ of $A$-points of $ G $). See~\cite{bcf} for further details.
\end{free text}

\subsection{Super Harish-Chandra pairs and Lie supergroups} \label{subsec:sHCp_Lie-sgrps}
 A different way to deal with Lie supergroups (or algebraic supergroups) is via the notion of ``super Harish-Chandra pair'', that gathers together the infinitesimal counterpart~-- that of Lie superalgebra~-- and the classical (i.e., ``non-super'') counterpart~-- that of Lie group~-- of the notion of Lie supergroup. We recall it shortly, referring to~\cite{Ga3} (and~\cite{Ga2}) for further details.

\begin{free text} \label{sHCp's} {\bf Super Harish-Chandra pairs.} We call {\it super Harish-Chandra pair}~-- or just ``{\it sHCp}'' in short~-- any pair $ (G, \fg) $ such that $ G $ is a (complex holomorphic) Lie group, $ \fg $ a complex Lie superalgebra such that $ \fg_\zero = \operatorname{Lie}(G) $, and there is a (holomorphic) $ G $-action on $ \fg $ by Lie superalgebra automorphisms, denoted by $ \operatorname{Ad} \colon G \relbar\joinrel\relbar\joinrel\longrightarrow \operatorname{Aut}(\fg) $, such that its restriction to $ \fg_\zero $ is the adjoint action of $ G $ on $ \operatorname{Lie}(G) = \fg_\zero $ and the differential of this action is the restriction to $ \operatorname{Lie}(G) \times \fg = \fg_\zero \times \fg $ of the adjoint action of $ \fg $ on itself. Then a {\it morphism} $ (\Omega,\omega)\colon ( G', \fg' ) \relbar\joinrel\longrightarrow ( G'', \fg'' ) $ between sHCp's is given by a morphism of Lie groups $ \Omega\colon G' \longrightarrow G'' $ and a morphism of Lie superalgebras $ \omega\colon \fg' \longrightarrow \fg'' $ such that $ \omega{|}_{\fg_\zero} = d\Omega $ and $ \omega \circ \operatorname{Ad}_g = \operatorname{Ad}_{\Omega_+(g)} \circ \omega $ for all $ g \in G $.

We denote the category of all super Harish-Chandra pairs by $ \sHCp $.
\end{free text}

\begin{free text} \label{Liesgrp's->sHCp's}
 {\bf From Lie supergroups to sHCp's.} For any $ A \in \Wsalg $, let $ A[\varepsilon] := A[x]\big/\big(x^2\big) $, with $ \varepsilon := x \mod \big(x^2\big) $ being {\it even}. Then $ A[\varepsilon] = A \oplus A \varepsilon \in \Wsalg $, and there exists a natural morphism $ p_{{}_A}\colon A[\varepsilon] \longrightarrow A $ given by $ ( a + a'\varepsilon ) {\buildrel { p_{{}_{A_{ }}}} \over \mapsto} a $. For a Lie supergroup $ \bG $, thought of as a~functor $ \bG \colon \Wsalg \relbar\joinrel\longrightarrow \text{\rm ($ \cat{groups} $)} $~-- i.e., identifying $ \bG \cong \cW_\bG $~-- let $ \bG(p_A)\colon \bG (A[\varepsilon] ) \relbar\joinrel\longrightarrow \bG(A) $ be the morphism associated with $ p_A\colon A[\varepsilon] \relbar\joinrel\longrightarrow A $. Then there exists a unique functor $ \operatorname{Lie}(\bG)\colon \Wsalg \relbar\joinrel\longrightarrow \text{\rm ($ \cat{groups} $)}$ given on objects by $ \operatorname{Lie}(\bG)(A) := \Ker \big(\bG(p)_A\big) $. The key fact now is that {\it $ \operatorname{Lie}(\bG) $ is actually valued in the category $ \text{\rm ($ \cat{Lie} $)} $ of Lie algebras, i.e., it is a functor $ \operatorname{Lie}(\bG) \colon \Wsalg \longrightarrow \text{\rm ($ \cat{Lie} $)} $}. Furthermore, there exists a Lie superalgebra $ \fg $~-- identified with the {\it tangent superspace to $ \bG $ at the unit point}~-- such that $ \operatorname{Lie}(\bG) = \cL_\fg $ (cf.\ Section~\ref{subsec:Lie-superalgebras}). Moreover, for any $ A \in \Wsalg $ one has $ \operatorname{Lie}(\bG)(A) = \operatorname{Lie}(\bG(A)) $, the latter being the Lie algebra of the Lie group $ \bG(A) $.

 Finally, the construction $ \bG \mapsto \operatorname{Lie}(\bG) $ for Lie supergroups is actually natural, i.e., provides a functor $ \operatorname{Lie}\colon \text{\rm ($ \cat{Lsgrp} $)} \relbar\joinrel\relbar\joinrel\longrightarrow \text{\rm ($ \cat{sLie} $)} $ from Lie supergroups to Lie superalgebras.

On the other hand, each Lie supergroup $ \bG $ is a group object in the category of (holomorphic) supermanifolds: therefore, its underlying submanifold $ \bG_{\rm rd} $ is in turn a group object in the category of (holomorphic) manifolds, i.e., it is a Lie group. Indeed, the naturality of the construction $ \bG \mapsto \bG_{\rm rd} $ provides a functor from Lie supergroups to (complex) Lie groups.

 On top of this analysis, if $ \bG $ is any Lie supergroup, then $ ( \bG_{\rm rd}, \operatorname{Lie}(\bG) ) $ is a super Harish-Chandra pair; more precisely, we have a functor $ \Phi \colon \text{\rm ($ \cat{Lsgrp} $)} \relbar\joinrel\longrightarrow \sHCp $ given on objects by $ \bG \mapsto ( \bG_{\rm rd}, \operatorname{Lie}(\bG)) $ and on morphisms by $ \phi \mapsto ( \phi_{\rm rd}, \operatorname{Lie}(\phi) ) $.
\end{free text}

\begin{free text} \label{sHCp's->Liesgrp's} {\bf From sHCp's to Lie supergroups.} The functor $ \Phi\colon \text{\rm ($ \cat{Lsgrp} $)} \relbar\joinrel\longrightarrow \sHCp $ has a~quasi-inverse $ \Psi \colon \sHCp\relbar\joinrel\longrightarrow \text{\rm ($ \cat{Lsgrp} $)} $ that we can describe explicitly (see~\cite{Ga2, Ga3}).

Indeed, let $ \cP := \big( G, \fg \big) $ be a super Harish-Chandra pair, and let $ B := \{ Y_i \}_{i \in I} $ be a $ \C $-basis of $ \fg_\one $. For any $ A \in \Wsalg $, we define $ \GP(A) $ as being the group with generators the elements of the set $ \Gamma^{B}_{A} := G(A) {\textstyle \bigcup} \{ (1 + \eta_i Y_i) \}_{(i,\eta_i) \in I \times A_\one} $ and relations
\begin{gather*}
 1_{{}_G} = 1, \qquad g' \cdot g'' = g' \cdot_{{}_G} g'', \\
( 1 + \eta_i Y_i ) \cdot g = g \cdot ( 1 + c_{j_1} \eta_i Y_{j_1} ) \cdots ( 1 + c_{j_k} \eta_i Y_{j_k} )\\
\hphantom{( 1 + \eta_i Y_i ) \cdot g =}{} \ \text{with} \quad \operatorname{Ad}\big( g^{-1}\big)(Y_i) = c_{j_1} Y_{j_1} + \cdots + c_{j_k} Y_{j_k}, \\
( 1 + \eta'_i Y_i ) \cdot ( 1 + \eta''_i Y_i ) = \big( 1_{{}_G} + \eta''_i \eta'_i Y_i^{\langle 2 \rangle} \big)_{{}_G} \cdot ( 1 + ( \eta'_i + \eta''_i ) Y_i ),\\
( 1 + \eta_i Y_i ) \cdot ( 1 + \eta_j Y_j ) = ( 1_{{}_G} + \eta_j \eta_i [Y_i,Y_j] )_{{}_G} \cdot ( 1 + \eta_j Y_j ) \cdot ( 1 + \eta_i Y_i )
\end{gather*}
for $ g, g', g'' \in G(A) $, $ \eta_i, \eta'_i, \eta''_i, \eta_j \in A_\one $, $ i, j \in I $. This defines the functor $ \GP $ on objects,~and one then defines it on morphisms as follows: for any $ \varphi\colon A' \longrightarrow A'' $ in $ \Wsalg $ we let $ \GP(\varphi)\colon \GP (A' )$ $\longrightarrow \GP\big(A''\big) $ be the group morphism uniquely defined on generators by $ \GP(\varphi)( g') := G(\varphi)( g') $, $ \GP(\varphi)( 1 + \eta' Y_i ) := ( 1 + \varphi(\eta') Y_i ) $.

 One proves (see \cite{Ga2,Ga3}) that every such $ \GP $ is in fact a Lie supergroup~-- thought of as a special functor, i.e., identified with its associated Weil--Berezin functor. In addition, {\it the construction $ \cP \mapsto \GP $ is natural in $ \cP $, i.e., it yields a functor $ \Psi\colon \sHCp\relbar\joinrel\longrightarrow \text{\rm ($ \cat{Lsgrp} $)} $}; moreover, {\it the latter is a quasi-inverse to $ \Phi \colon \text{\rm ($ \cat{Lsgrp} $)} \relbar\joinrel\longrightarrow \sHCp $}.
\end{free text}

\section[Lie superalgebras of type $D(2,1;\bsigma)$]{Lie superalgebras of type $\boldsymbol{D(2,1;\bsigma)}$}\label{sect: Lie-superalg_osp(4,2;s)}

 In this section, we present the complex Lie superalgebras of type $ D(2,1;a) $. On the one hand, one can construct them directly, through Kaplansky's representation (cf. \cite{Kap2}; a widely accessible account of it is also in Scheunert's book \cite[Chapter~I, Section~1.5]{Sc}), which depend on parameters. On the other hand, for non-singular values of the parameters one can realize them via Kac' method, choosing a suitable Cartan matrix, which still depends on parameters.

\subsection[Construction of $ \fg_\bsigma $ (after Kaplansky)]{Construction of $\boldsymbol{\fg_\bsigma}$ (after Kaplansky)} \label{subsect_3.1}

 To fix notation, we set $ V := \big\{ (\sigma_1,\sigma_2,\sigma_3) \in \C^3 \,\big|\, \sum_i \sigma_i = 0 \big\} $~-- a plane in $ \C^3 $~-- and $ V^\times := V \cap ( \C^\times )^3 $, where $ \C^\times := \C \setminus \{0\} $; also, $ \boldsymbol{0} := (0,0,0) \in V \setminus V^\times $. The Lie superalgebras $ \fg_\bsigma $ we deal with will depend on a parameter $ \bsigma := (\sigma_1,\sigma_2, \sigma_3) \in V $.

\begin{free text} \label{kapl-real} {\bf Kaplansky's realization.} We recall hereafter the construction of Lie superalgebras introduced by Kaplansky (cf.~\cite{Kap2}) who denoted them by $ \varGamma(A,B,C) $, with $ A, B, C \in \C $. With a suitable normalization, and different terminology, they form the family nowadays called ``of type $ D(2,1;a) $'', with $ a \in \C $ and $ a \not\in \{0,-1\} $ to ensure simplicity; we shall stick to Kaplansky's point of view, but using the parameter $ \bsigma \in \C^3 $ (and later in $ V $) and adopting the convention of denoting by $ \osp(4,2;\bsigma) $ the Lie superalgebra of type $ D(2,1;\bsigma) $. A detailed account of Kap\-lansky's realization of these Lie algebras can be found in Scheunert's book (cf.\ \cite[Chapter~I, Section~1, Example~5]{Sc}). Recall that having a Lie superalgebra $ \fg = \fg_\zero \oplus \fg_\one $ amounts to having:
$(a)$ a Lie algebra $ \fg_\zero $; $(b)$ a $ \fg_\zero $-module $ \fg_\one $; $(c)$ a $ \fg_\zero $-valued symmetric product on $ \fg_\one $, such that the $ \fg_\zero $-action is by derivations of the (symmetric) product.

 Indeed, in the family of Lie superalgebras $ \fg_\bsigma := \osp(4,2;\bsigma) $ parts $(a)$ and $(b)$ above will stand the same for any $ \bsigma \in V $, while the dependence (of the Lie superalgebra structure) on $ \bsigma $ will actually occur only for part~$(c)$.

 {\it \underline{\text{Step $(a)$}}:} Let $ \fsl_i(2) := \C e_i \oplus \C h_i \oplus \C f_i$ $( i=1,2,3) $ be three isomorphic copy of $ \fsl(2) $, in its standard realization. Then we consider their direct sum $ \fsl_1(2) \oplus \fsl_2(2) \oplus \fsl_3(2) $ with its natural structure of Lie algebra: this will be the even part $ {\big( \fg_\bsigma \big)}_\zero $ of our $ \fg_\bsigma $.

 {\it \underline{\text{Step $(b)$}}:} Let $ \Box := \C \vert + \rangle \oplus \C \vert -\rangle $ be the (natural) tautological $ 2 $-dimensional $ \fsl(2) $-module, and $ \Box_i $ (for all $ i = 1, 2, 3 $) its $ i $-th copy for $ \fsl_i(2) $. The odd part $ {\big( \fg_\bsigma \big)}_\one $ of $ \fg_\bsigma $ is (isomorphic to) $ \Box_1 \boxtimes \Box_2 \boxtimes \Box_3 $, with its natural structure of $ \big( \fsl_1(2) \oplus \fsl_2(2) \oplus \fsl_3(2) \big) $-module. We describe a $ \C $-basis of it with the following shorthand notation: for every $ \epsilon_1, \epsilon_2, \epsilon_3 \in \{+,-\} $, we set $ v_{\epsilon_1,\epsilon_2,\epsilon_3} := | \epsilon_1\rangle \otimes | \epsilon_2\rangle \otimes | \epsilon_3\rangle \in \Box_1 \boxtimes \Box_2 \boxtimes \Box_3 $.

 {\it \underline{\text{Step $(c)$}}:} We define a projection $ \psi: \Box^{ \otimes 2} \cong S^2 \Box \oplus \wedge^2 \Box \relbar\joinrel\relbar\joinrel\twoheadrightarrow \wedge^2 \Box \cong \C $ by
\begin{gather*} \vert \pm \rangle \otimes \vert \pm \rangle \longmapsto 0, \qquad
 \vert \pm \rangle \otimes \vert \mp \rangle \longmapsto \pm 2^{-1}. \end{gather*}
Then for $ \sigma' \in \C $, we define the linear map $ p\colon \Box^{ \otimes 2} \cong S^2 \Box \oplus \wedge^2 \Box \relbar\joinrel\relbar\joinrel\twoheadrightarrow S^2 \Box \cong \fsl_2 $ by
\begin{gather*} p(u,v).w := \sigma' \big( \psi(v,w).u - \psi(w,u).v \big) \qquad \forall\, u,v,w \in \Box \end{gather*}
that more explicitly reads
\begin{gather*} \vert + \rangle \otimes \vert + \rangle \longmapsto \sigma' e, \qquad
 \vert \pm \rangle \otimes \vert \mp \rangle \longmapsto -2^{-1} \sigma' h, \qquad
 \vert -\rangle \otimes \vert - \rangle \longmapsto -\sigma' f
\end{gather*}
with $ \{e,h,f\} $ being the standard $ \fsl_2 $-triple, i.e., $ [e,f] = h $, $ [h,e] = 2e $, $ [h,f] = -2f $.

Now, for each triple $ \bsigma := (\sigma_1,\sigma_2,\sigma_3) \in \C^3 $ and each $ i \in \{1,2,3\} $, let $ p_i \colon \Box_i^{ \otimes 2} \relbar\joinrel\relbar\joinrel\twoheadrightarrow \fsl_i(2) $ be the above map with scalar factor $ \sigma' := -2\sigma_i \in \C $. The Lie superbracket $ [ \,\ ] $ on $ \fg_{\bar{1}} \times \fg_{\bar{1}} $ can be expressed as
\begin{gather*} \big[ {\otimes}_{i=1}^3 u_i, \otimes_{i=1}^3 v_i \big] = \sum_{\tau \in \fS_3} \psi(u_{\tau(1)}, v_{\tau(1)}) \psi(u_{\tau(2)}, v_{\tau(2)}) p_{\tau(3)}(u_{\tau(3)}, v_{\tau(3)}).
\end{gather*}

 Tidying everything up, we can define a bracket on $ \fg_\bsigma = {\big( \fg_\bsigma \big)}_\zero \oplus {\big( \fg_\bsigma \big)}_\one $ by the formula
\begin{gather*} [ x+v, y+w ] := [x,y ] + x.w - y.v + [v,w ] \qquad \forall\, x, y \in ( \fg_\bsigma )_\zero,\quad v, w \in ( \fg_\bsigma )_\one.\end{gather*}
\end{free text}

The following proposition resumes the outcome of this construction (see also Theorem~\ref{thm_g-spec} later on for what happens for singular values of $ \bsigma $):

\begin{prop}[{cf.\ \cite[Chapter~II, Section~4.5, p.~135]{Sc}}] \label{prop: properties-g_sigma} Let $ \bsigma \in \C^3 $.
\begin{enumerate}\itemsep=0pt
\item[$(a)$] The bracket given above defines a structure of Lie superalgebra on $ \fg_\bsigma $ if and only if the given $ \bsigma \in \C^3 $ satisfies the condition $ \sigma_1 + \sigma_2 + \sigma_3 = 0 $, that is, $ \bsigma \in V := \C^3 \bigcap \big\{ \sum_i \sigma_i = 0 \big\} $.
\item[$(b)$] Let $ \bsigma \in V $. Then the Lie superalgebra $ \fg_\bsigma $ is simple if and only if $ \bsigma \in V^\times $.
\item[$(c)$] Let $ \bsigma', \bsigma'' \in V $. Then the Lie superalgebras $ \fg_{\bsigma'} $ and $ \fg_{\bsigma''} $ are isomorphic if and only if there exists $ \tau \in \fS_3 $ such that $ \sigma'' $ and $ \tau.\sigma' $ are proportional.
\end{enumerate}
\end{prop}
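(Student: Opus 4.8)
The plan is to verify each of the three claims essentially by direct computation in Kaplansky's realization, exploiting the explicit multiplication table given in Section~\ref{subsect_3.1}. For part~$(a)$, the Lie superbracket is bilinear and $\Z_2$-graded by construction, and anti-symmetry (axiom~(a)) holds automatically because $\psi$ is anti-symmetric on $\wedge^2\Box$ while $p$ is symmetric on $S^2\Box$. The only nontrivial axiom is the graded Jacobi identity~(b). First I would note that the identity is automatic whenever at most one of the three arguments is odd, since those cases reduce to the $\fg_\zero$-module axioms for $\fg_\one$ and the fact that each $p_i$ intertwines the $\fsl_i(2)$-action. The genuine content lies in the case of three odd arguments $v_{\epsilon_1,\epsilon_2,\epsilon_3}$: expanding $\big[v,[w,u]\big]+\text{(cyclic)}$ using the formula $\big[{\otimes}_iu_i,\otimes_iv_i\big]=\sum_{\tau\in\fS_3}\psi\psi\,p_\tau$ and then letting $\fg_\zero$ act back on the third odd vector, one collects terms proportional to $\sigma_1+\sigma_2+\sigma_3$. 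I expect the Jacobiator to vanish identically precisely when this sum is zero, which gives both implications of~$(a)$ at once. This is the step I expect to be the main obstacle: it is a finite but genuinely intricate bookkeeping over the eight odd basis vectors and the $\fS_3$-summation, where the cancellations hinge delicately on the signs introduced by $\psi(\vert\pm\rangle,\vert\mp\rangle)=\pm2^{-1}$ and by the scalars $\sigma'=-2\sigma_i$.

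For part~$(b)$, assume $\bsigma\in V$. I would first show that $\bsigma\in V^\times$ implies simplicity: any nonzero graded ideal $\fa$ must meet $\fg_\one$ nontrivially (because $[\fg_\one,\fg_\one]$ generates all of $\fg_\zero$ once every $\sigma_i\neq0$, the $p_i$ being nonzero surjections onto each $\fsl_i(2)$), and then the irreducibility of $\fg_\one\cong\Box_1\boxtimes\Box_2\boxtimes\Box_3$ as a $\big(\fsl_1(2)\oplus\fsl_2(2)\oplus\fsl_3(2)\big)$-module forces $\fa\supseteq\fg_\one$, whence $\fa=\fg_\bsigma$. Conversely, if some $\sigma_i=0$, say $\sigma_3=0$, then $p_3\equiv0$, so $[\fg_\one,\fg_\one]$ lands in $\fsl_1(2)\oplus\fsl_2(2)$; consequently $\fsl_3(2)$ (more precisely a suitable ideal built from it) fails to be reached by brackets of odd elements, producing a proper nonzero ideal and destroying simplicity. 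I would make this precise by exhibiting the explicit ideal and checking it is closed under the bracket using the multiplication table.

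For part~$(c)$, the group $\mathcal{G}=\C^\times\times\fS_3$ acts on the family: the $\C^\times$-factor acts by overall rescaling of $\bsigma$ (rescaling all $p_i$ simultaneously, which is absorbed by rescaling the odd part and is therefore an isomorphism), while $\fS_3$ permutes the three copies $\fsl_i(2)$ and correspondingly the tensor factors $\Box_i$. Together these realize the ``proportional up to $\fS_3$'' equivalence as genuine Lie superalgebra isomorphisms, giving the ``if'' direction. For the ``only if'' direction I would argue that any isomorphism $\fg_{\bsigma'}\to\fg_{\bsigma''}$ restricts to an isomorphism of even parts $\bigoplus_i\fsl_i(2)$, hence (by the structure of the automorphism group of a sum of three copies of $\fsl(2)$) permutes the three $\fsl(2)$-factors by some $\tau\in\fS_3$; comparing the induced maps on $\fg_\one$ and tracking how the brackets $p_i$ transform shows that the defining scalars must agree up to a common nonzero factor, i.e.\ $\bsigma''$ is proportional to $\tau.\bsigma'$. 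The delicate point here is to rule out spurious isomorphisms not of this form, which I would handle by observing that the three scalars $\{\sigma_i\}$ are recovered, up to simultaneous scaling and permutation, as intrinsic invariants of the bracket (for instance via the eigenvalue data of the symmetric $\fg_\zero$-valued form on $\fg_\one$).
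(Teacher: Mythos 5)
The paper itself gives no proof of Proposition~\ref{prop: properties-g_sigma}: it is quoted from Scheunert's book \cite{Sc} (going back to Kaplansky), and the only place where the paper independently recovers part of it is Theorem~\ref{thm_g'-spec}, which --- since $ \fgps \cong \fg_\bsigma $ for \emph{all} $ \bsigma \in V $ by \eqref{gps=g_sing} --- re-derives the non-simple structure at singular $ \bsigma $ by inspecting the multiplication table of Section~\ref{second-basis}. So your self-contained verification inside Kaplansky's realization is a genuinely different (and legitimate) route, and its skeleton is the standard one. For (a): antisymmetry, the grading, and the Jacobi identity with at most \emph{two} odd entries hold for every $ \bsigma $ (module axioms plus equivariance of $ \psi $ and the $ p_i $ --- note that equivariance is what handles the two-odd-argument case, not the ``at most one odd'' cases as you wrote), and the constraint appears exactly for three odd arguments: from the table, $ [ v_{+,+,+}, v_{-,-,-} ] = 2^{-1}\sum_i \sigma_i h_i $, so the Jacobiator on $ \big( v_{+,+,+}, v_{+,+,+}, v_{-,-,-} \big) $ is a nonzero multiple of $ (\sigma_1+\sigma_2+\sigma_3)\, v_{+,+,+} $, giving necessity; sufficiency is the finite bookkeeping you describe. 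For (c), both directions are as you say: rescaling $ \bsigma $ by $ c $ is absorbed by rescaling $ \fg_\one $ by $ c^{-1/2} $, $ \fS_3 $ acts by permuting the $ \fsl_i(2) $'s and the factors $ \Box_i $, and conversely any isomorphism permutes the simple ideals of $ \fg_\zero $, after which Schur's lemma on the irreducible $ \Box^{\boxtimes 3} $ pins down the $ \sigma_i $ up to permutation and a common nonzero scalar.

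The one step that would fail as written is the converse of (b). When $ \sigma_3 = 0 $ and $ \sigma_1, \sigma_2 \neq 0 $, there is \emph{no} proper ideal ``built from $ \fsl_3(2) $'': since $ \fg_\one $ restricted to $ \fsl_3(2) $ is a sum of copies of the faithful module $ \Box_3 $, one has $ [ \fsl_3(2), \fg_\one ] = \fg_\one $, and $ [ \fg_\one, \fg_\one ] = \fsl_1(2) \oplus \fsl_2(2) $, so the ideal generated by $ \fsl_3(2) $ is all of $ \fg_\bsigma $. The proper ideal witnessing non-simplicity is the \emph{complement} $ \fsl_1(2) \oplus \fsl_2(2) \oplus \fg_\one \cong \mathfrak{psl}(2|2) $ --- precisely the $ \fb'_i $ of Theorem~\ref{thm_g'-spec}(2) --- which is an ideal exactly because $ p_3 \equiv 0 $ keeps $ [ \fg_\one, \fg_\one ] $ inside $ \fsl_1(2) \oplus \fsl_2(2) $. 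Relatedly, in the forward direction of (b) your parenthetical justification that a nonzero graded ideal $ \fa $ must meet $ \fg_\one $ (namely ``$ [ \fg_\one, \fg_\one ] $ generates $ \fg_\zero $'') is a non sequitur; the correct reason is faithfulness: if $ \fa \cap \fg_\one = 0 $ then $ \fa \subseteq \fg_\zero $ and $ [ \fa, \fg_\one ] \subseteq \fa \cap \fg_\one = 0 $, so $ \fa $ annihilates the faithful $ \fg_\zero $-module $ \fg_\one $, whence $ \fa = 0 $. The surjectivity of the odd bracket onto $ \fg_\zero $ (valid since all $ \sigma_i \neq 0 $) is then what you use at the last step, to pass from $ \fa \supseteq \fg_\one $ to $ \fa = \fg_\bsigma $. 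With these local repairs, your plan goes through.
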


 Thus, the isomorphism classes of our $ \fg_\bsigma $'s are in bijection with the orbits of the $ \fS_3 $-action onto $ {\mathbb{P}}(V) \bigcup \{ \boldsymbol{0} := (0,0,0) \} \cong {\mathbb{P}}^1_\C \bigcup \{*\} $, a complex projective line plus an extra point.

\begin{free text}{\bf The multiplicative table of $\boldsymbol{\fg_\bsigma}$.} \label{mult-table}
For later use, we record hereafter the {\it complete multiplication table} of the Lie superalgebra $ \fg_\bsigma $~-- for every $ \bsigma = (\sigma_1,\sigma_2,\sigma_3) \in V $~-- with respect to the $ \C $-basis
$\{ h_i, e_j, f_j, v_{\epsilon_1,\epsilon_2,\epsilon_3} \,|\, i, j \in \{1,2,3\}, \epsilon_1, \epsilon_2, \epsilon_3 \in \{+,-\} \} $ given from scratch. In addition, in the formulas below we also take into account the following {\it coroots}:
 \begin{gather*}
 h_{\beta_1}:= + 2^{-1} \sigma_1 h_1 - 2^{-1} \sigma_2 h_2 - 2^{-1} \sigma_3 h_3, \\
 h_{\beta_2}:= - 2^{-1} \sigma_1 h_1 + 2^{-1} \sigma_2 h_2 - 2^{-1} \sigma_3 h_3, \\
 h_{\beta_3}:= - 2^{-1} \sigma_1 h_1 - 2^{-1} \sigma_2 h_2 + 2^{-1} \sigma_3 h_3, \\
 h_\theta:= + 2^{-1} \sigma_1 h_1 + 2^{-1} \sigma_2 h_2 + 2^{-1} \sigma_3 h_3.
 \end{gather*}

 In short, the table is the following ($ \forall\, i, j \in \{1,2,3\} $, $ \epsilon_1, \epsilon_2, \epsilon_3 \in \{+,-\} $):
\begin{gather*}
[ h_i, h_j ] = 0, \qquad [ e_i, e_j ] = 0, \qquad [ f_i, f_j ] = 0, \\
[ h_i, e_j ] = 2 \delta_{ij} e_j, \qquad [ h_i, f_j ] = -2 \delta_{ij} f_j, \qquad [ e_i, f_j ] = \delta_{ij} h_j, \\
[ e_i, v_{\epsilon_1,\epsilon_2,\epsilon_3} ] = \delta_{\epsilon_i,-} v_{{(-1)}^{\delta_{1,i}}\epsilon_1,{(-1)}^{\delta_{2,i}}\epsilon_2,{(-1)}^{\delta_{3,i}}\epsilon_3}, \\
[ h_i, v_{\epsilon_1,\epsilon_2,\epsilon_3} ] = \epsilon_i v_{\epsilon_1,\epsilon_2,\epsilon_3}, \\
[ f_i, v_{\epsilon_1,\epsilon_2,\epsilon_3} ] = \delta_{\epsilon_i,+} v_{{(-1)}^{\delta_{1,i}}\epsilon_1,{(-1)}^{\delta_{2,i}}\epsilon_2,{(-1)}^{\delta_{3,i}}\epsilon_3}, \\
 [ v_{+,+,+}, v_{+,-,-} ] = -\sigma_1 e_1, \qquad [ v_{+,+,+}, v_{-,+,-} ] = -\sigma_2 e_2, \qquad \\
[ v_{+,+,+}, v_{-,-,+} ] = -\sigma_3 e_3, \qquad [ v_{+,+,-}, v_{+,-,+} ] = +\sigma_1 e_1, \\
 [ v_{+,+,-}, v_{-,+,+} ] = +\sigma_2 e_2, \qquad [ v_{+,-,+}, v_{-,+,+} ] = +\sigma_3 e_3, \\
 [ v_{+,+,+}, v_{-,-,-} ] = + 2^{-1} \sigma_1 h_1 + 2^{-1} \sigma_2 h_2 + 2^{-1} \sigma_3 h_3 = h_\theta, \\
 [ v_{+,+,-}, v_{-,-,+} ] = - 2^{-1} \sigma_1 h_1 - 2^{-1} \sigma_2 h_2 + 2^{-1} \sigma_3 h_3 = h_{\beta_3}, \\
 [ v_{+,-,+}, v_{-,+,-} ] = - 2^{-1} \sigma_1 h_1 + 2^{-1} \sigma_2 h_2 - 2^{-1} \sigma_3 h_3 = h_{\beta_2}, \\
[ v_{-,+,+}, v_{+,-,-} ] = + 2^{-1} \sigma_1 h_1 - 2^{-1} \sigma_2 h_2 - 2^{-1} \sigma_3 h_3 = h_{\beta_1}, \\
 [ v_{-,+,+}, v_{-,-,-} ] = +\sigma_1 f_1, \qquad
 [ v_{+,-,+}, v_{-,-,-} ] = +\sigma_2 f_2, \qquad \\
 [ v_{+,+,-}, v_{-,-,-} ] = +\sigma_3 f_3, \qquad
 [ v_{-,+,-}, v_{-,-,+} ] = -\sigma_1 f_1, \\
[ v_{+,-,-}, v_{-,-,+} ] = -\sigma_2 f_2, \qquad
 [ v_{+,-,-}, v_{-,+,-} ] = -\sigma_3 f_3.
 \end{gather*}
\end{free text}

\subsection{Kac' realization} \label{Kac-real}

 We show now how the Lie superalgebras $ \fg_\bsigma $ of Section~\ref{subsect_3.1}, for {\it non-singular values} $ \bsigma \in V^\times $, can be also realized as contragredient Lie superalgebras (via Kac' method).

\begin{free text} \label{1st_Kac-constr}
 {\bf First construction.} For any $ \bsigma \in V $, we consider the Cartan matrix
\begin{gather*} A_\bsigma = ( a_{i,j} )_{i=1,2,3}^{j=1,2,3} =
 \begin{pmatrix}
 0 & - \sigma_3 & - \sigma_2 \\
 - \sigma_3 & 0 & - \sigma_1 \\
 - \sigma_2 & - \sigma_1 & 0
 \end{pmatrix}
\end{gather*}
(see \cite{ccll} for a far-reaching analysis of what else is associated with such a Cartan matrix). We associate to it a Dynkin diagram for which the nodes are defined as in \cite[Section~2.5.5]{K1}, and we join any two vertices $ i $ and $ j $ with an edge labeled by $ a_{ij} $: the resulting diagram then is
\begin{gather*}
 \setlength{\unitlength}{1mm}
\begin{picture}(50,35)(0,-5)
\put(0,0){\circle{3}}
\put(-5,-1.2){$2$}
\qbezier[20](1.05,1.05)(0,0)(-1.05,-1.05)
\qbezier[20](1.05,-1.05)(0,0)(-1.05,1.05)
\put(30,0){\circle{3}}
\put(32.5,-1.2){$3$}
\qbezier[20](31.05,1.05)(30,0)(28.95,-1.05)
\qbezier[20](31.05,-1.05)(30,0)(28.95,1.05)
\put(15,24){\circle{3}}
\put(14,27){$1$}
\qbezier[20](16.05,25.05)(15,24)(13.95,22.95)
\qbezier[20](16.05,22.95)(15,24)(13.95,25.05)
\qbezier[200](1.5,0)(15,0)(28.5,0)
\qbezier[200](0.75,1.4)(7.5,12)(14.25,22.6)
\qbezier[200](29.25,1.4)(22.5,12)(15.75,22.6)
\put(13,-3.8){$-\sigma_1$}
\put(24,12){$-\sigma_2$}
\put(-0.9,12){$-\sigma_3$}
\end{picture}
\end{gather*}
Following Kac, we consider a realization $ \big( \fh, \Pi^\vee = {\{H_{\beta_i}\}}_{i=1,2,3}$, $\Pi = {\{\beta_i\}}_{i=1,2,3} \big) $ of $ A_\bsigma $, that is
\begin{enumerate}\itemsep=0pt
 \item[1)] $ \fh $ is a $ \C $-vector space,
 \item[2)] $ \Pi^\vee $ is the set of {\it simple coroots}, a basis of $ \fh $,
 \item[3)] $ \Pi $ is the set of {\it simple roots}, a basis of $ \fh^\ast $,
 \item[4)] $ \beta_j(H_{\beta_i}) = a_{i,j} $ for all $ 1 \leq i, j \leq 3 $.
\end{enumerate}
 The (contragredient) Lie superalgebra $ \fg(A_\bsigma) $ is, by definition, the simple Lie superalgebra defined as follows. First, let $ \tilde{\fg}(A_\bsigma) $ be the Lie superalgebra with the nine generators $ H_{\beta_i}$, $X_{\pm \beta_i}$ ($ i=1,2,3 $), and relations (for $ 1 \leq i, j \leq 3 $)
\begin{gather*}
[ H_{\beta_i}, H_{\beta_j} ] = 0,\qquad [ H_{\beta_i}, X_{\pm \beta_j}] = \pm \beta_j(H_{\beta_i}) X_{\pm \beta_j}, \\
 [ X_{\beta_i}, X_{-\beta_j} ] = \delta_{i,j} H_{\beta_i},\qquad [ X_{\pm \beta_i}, X_{\pm \beta_i} ] = 0
\end{gather*}
 with parity $|H_{\beta_i}| = \bar{0} $ and $|X_{\pm\beta_i}| = \bar{1} $ for all $ i $. Then one considers the maximal homogeneous ideal $ I_\bsigma $ of $ \tilde{\fg}(A) $ which meets trivially the $ \C $-span of the generators, and finally defines $ \fg(A_\bsigma) := \tilde{\fg}(A_\bsigma) / I_\bsigma $.

A straightforward (and easy) analysis shows that $ \fg(A_\bsigma) $ is finite-dimensional. Then, by general results (cf.\ \cite[Section~2.5.1]{K1}), there exists an epimorphism $ \Phi_\bsigma\colon \fg_\bsigma \relbar\joinrel\relbar\joinrel\twoheadrightarrow \fg(A_\bsigma) $ of Lie superalgebras uniquely determined by
\begin{alignat*}{4}
 & h_{\beta_1} \mapsto H_{\beta_1}, \qquad&& h_{\beta_2} \mapsto H_{\beta_2}, \qquad && h_{\beta_3} \mapsto H_{\beta_3}, &\\
 & v_{-,+,+} \mapsto X_{+\beta_1}, \qquad && v_{+,-,+} \mapsto X_{+\beta_2}, \qquad && v_{+,+,-} \mapsto X_{+\beta_3},& \\
& v_{+,-,-} \mapsto X_{-\beta_1}, \qquad && v_{-,+,-} \mapsto X_{-\beta_2}, \qquad && v_{-,-,+} \mapsto X_{-\beta_3}.&
 \end{alignat*}

{\it When $ \bsigma $ is {\it non-singular}, that is $ \bsigma \in V^\times $, this epimorphism $ \Phi_\bsigma $ is actually an {\it isomorphism}, so that $ \osp(4,2;\bsigma) =: \fg_\bsigma \cong \fg(A_\bsigma) $}. One can see it in two ways: first, since $ \fg_\bsigma $ is simple for $ \bsigma \in V^\times $, the kernel of $ \Phi_\bsigma $ is then necessarily trivial; second, direct inspection shows that for $ \bsigma \in V^\times $ the ideal $ I_\bsigma $ has the ``correct'' codimension in $ \tilde{\fg}(A_\bsigma) $ so that $ \Phi_\bsigma $ be injective.

{\it We assume now, for the rest of the present subsection, that $ \bsigma \in V^\times $ $($non-singular case$)$}.

 Then the set $ \Delta^+ $ of positive roots of $ \fg(A_\bsigma) $ has the following description:
\begin{gather*} \Delta^+ = \big\{ \beta_1, \beta_2, \beta_3, \beta_1+\beta_2, \beta_2+\beta_3, \beta_3+\beta_1, \beta_1+\beta_2+\beta_3 \big\}.\end{gather*}
This can be seen as a consequence of $ \fg_\bsigma \cong \fg(A_\bsigma) $, or more directly by inspection (namely, describing $ I_\bsigma $ explicitly). The set of roots then is $ \Delta = \Delta^+ \cup \Delta^- $, with $ \Delta^- := -\Delta^+ $.

 The dual $ \fh^\ast $ of the Cartan subalgebra has the following description. Let $ {\{ \vep_i \}}_{i=1,2,3} \subset \fh^\ast $ be an orthogonal basis normalized by the conditions $ (\vep_i, \vep_i) = - \frac{ 1 }{ 2 } \sigma_i $ ($ i = 1, 2, 3 $); then one can verify that $ (\beta_i, \beta_j) = -\sigma_k $ with $ \{i,j,k\} = \{1,2,3\} $, where the simple roots are
\begin{gather*} \beta_1 = -\vep_1 + \vep_2 + \vep_3, \qquad \beta_2 = \vep_1 - \vep_2 + \vep_3, \qquad \beta_3 = \vep_1 + \vep_2 - \vep_3.\end{gather*}
Now let $ \Delta_{\bar{0}}^+ $ and $ \Delta_{\bar{1}}^+ $ be the set of even (resp.\ odd) positive roots. One has
\begin{gather*} \Delta_{\bar{0}}^+ = \big\{ \beta_1 + \beta_2, \beta_2 + \beta_3, \beta_3 + \beta_1 \big\} = \big\{ 2\vep_i \,|\, 1 \leq i \leq 3 \big\},
 \qquad \Delta_{\bar{1}}^+ = \big\{ \beta_1, \beta_2, \beta_3, \theta \big\}, \end{gather*}
 where $ \theta := \beta_1 + \beta_2 + \beta_3 = \vep_1 + \vep_2 + \vep_3 $ is the highest root.

 We introduce now further root vectors and coroots, defined by
\begin{alignat*}{4}
& X_{2\vep_1} := [ X_{\beta_2}, X_{\beta_3} ], \qquad&&
 X_{2\vep_2} := [ X_{\beta_3}, X_{\beta_1} ], \qquad&&
 X_{2\vep_3} := [ X_{\beta_1}, X_{\beta_2} ], &\\
& X_{-2\vep_1} := - [ X_{-\beta_2}, X_{-\beta_3} ], \qquad&&
 X_{-2\vep_2} := - [ X_{-\beta_3}, X_{-\beta_1} ], \qquad&&
 X_{-2\vep_3} := - [ X_{-\beta_1}, X_{-\beta_2} ], &\\
& H_{2\vep_1} := - ( H_{\beta_2} + H_{\beta_3} ), \qquad&&
 H_{2\vep_2} := - ( H_{\beta_3} + H_{\beta_1} ), \qquad&&
 H_{2\vep_3} := - ( H_{\beta_1} + H_{\beta_2} ). &
\end{alignat*}
 It can be checked that, for $ i, j \in \{1,2,3\} $, one has
\begin{gather} \label{[X_e,X_e] & [H_e,X_e]}
[ X_{2\vep_i}, X_{-2\vep_j} ] = \sigma_i \delta_{i,j} H_{2\vep_i}, \qquad
[ H_{2\vep_i}, X_{\pm 2\vep_j}] = \pm 2 \sigma_i \delta_{i,j} X_{\pm 2\vep_j},
\end{gather}
which implies that each $ \fa_i :=\C X_{2\vep_i} \oplus \C H_{2\vep_i} \oplus \C X_{-2\vep_i} $ (for $ 1 \leq i \leq 3 $) is a Lie sub-(super)algebra, with $ [\fa_j, \fa_k] = 0 $ for $ j \neq k $, and $ \fa_i $ {\it is isomorphic to $ \fsl(2) $ since $ \sigma_i \neq 0 $}. In particular, the even part of the Lie superalgebra $ \fg(A_\bsigma) $ is nothing but $ {\fg(A_\bsigma)}_\zero = \bigoplus_{i=1}^3 \fa_i $.

 For $ i=1,2,3 $ we set $ X_\theta^i = \big[ X_{2\vep_i}, X_{\beta_i} \big] \in {\fg(A_\bsigma)}_{\theta} $ and $ X_{-\theta}^i = \big[ X_{-2\vep_i}, X_{-\beta_i} \big] \in {\fg(A_\bsigma)}_{-\theta} $; then the following identities hold:
\begin{gather} \label{rels-X_th}
 \sum_{i=1}^3 X_{\pm \theta}^i = 0, \qquad \big[ X_\theta^j, X_{-\theta}^k \big] = - \sigma_j \sigma_k \sum_{i=1}^3 H_{\beta_i}.
\end{gather}
These formulas imply that there exists $ X_\theta \in {\fg(A_\bsigma)}_{\theta} $ and $ X_{-\theta} \in {\fg(A_\bsigma)}_{-\theta} $ such that
\begin{gather} \label{X_th^i vs. X_th}
 X_\theta^i = \sigma_i X_\theta, \qquad X_{-\theta}^i = \sigma_i X_{-\theta},
\end{gather}
for any $ 1 \leq i \leq 3 $. Hence, setting $ H_\theta = - ( H_{\beta_1} + H_{\beta_2} + H_{\beta_3} ) $, one has also $ [ X_\theta, X_{-\theta} ] = H_\theta $. Moreover, it also follows that $ H_{2\vep_1} + H_{2\vep_2} + H_{2\vep_3} = 2 H_\theta $. Eventually, from all this we see that the odd part $ {\fg(A_\bsigma)}_\one $ of the Lie superalgebra $ \fg(A_\bsigma) $ is the $ \C $-span of $\{ X_{\pm \beta_i} \}_{i=1,2,3} \cup \{ X_{\pm \theta} \} $.

 Finally, from the previous description we see that the isomorphism $ \osp(4,2;\bsigma) \cong \fg(A_\bsigma) $ can be described on basis elements by
\begin{alignat}{5}
 & h_i \mapsto \sigma_i^{-1} H_{2 \varepsilon_i}, \qquad&& e_i \mapsto \sigma_i^{-1} X_{2 \varepsilon_i}, \qquad && f_i \mapsto \sigma_i^{-1} X_{-2 \varepsilon_i}, \qquad && i = 1, 2, 3, & \nonumber\\
 & v_{-,+,+} \mapsto X_{\beta_1} \phantom{k}, \qquad && v_{+,-,+} \mapsto X_{\beta_2} \phantom{k}, \qquad && v_{+,+,-} \mapsto X_{\beta_3} \phantom{k}, \qquad && v_{+,+,+} \mapsto X_{\theta},& \nonumber\\
 & v_{+,-,-} \mapsto X_{-\beta_1}, \qquad && v_{-,+,-} \mapsto X_{-\beta_2}, \qquad && v_{-,-,+} \mapsto X_{-\beta_3}, \qquad && v_{-,-,-} \mapsto X_{-\theta}. &\label{eq: descr-isom_Kac-Kap}
 \end{alignat}

{\it Note} that in first line of \eqref{eq: descr-isom_Kac-Kap} the non-singularity of $ \bsigma \in V^\times $ plays a key role!
\end{free text}

\begin{free text} \label{2nd_Kac-constr} {\bf Second construction.} For the reader's convenience, we present now a second construction based upon a different, more familiar Cartan matrix (and associated Dynkin diagram). The link with the previous construction of Section~\ref{1st_Kac-constr} is through the application of the odd reflection with respect to the root~$ \beta_2 $, following V.~Serganova (see~\cite{Se2}).

 To begin with, set $ \alpha_1 = \beta_2 + \beta_3 $, $ \alpha_2 = -\beta_2 $, $ \alpha_3 = \beta_1 + \beta_2 $. Then $ \Pi' := {\{\alpha_i\}}_{i=1,2,3} $ is another set of simple roots of $ \fg(A_\bsigma) $, which is not Weyl-group conjugate to $ \Pi $; the corresponding set of coroots $ (\Pi')^\vee = {\big\{ h_{\alpha_i} \big\}}_{i=1,2,3} $ should be taken as $ h_{\alpha_1} = H_{2\vep_1} $, $ h_{\alpha_2} = H_{\beta_2} $, $ h_{\alpha_3} = H_{2\vep_3} $. With such a choice, the associated Cartan matrix $ A'_\bsigma := ( a'_{i,j} := \alpha_j(h_{\alpha_i}) )_{i=1,2,3}^{j=1,2,3} $ is given by
\begin{gather*} A'_\bsigma =
 \begin{pmatrix}
 2 \sigma_1 & - \sigma_1 & 0 \\
 - \sigma_1 & 0 & - \sigma_3 \\
 0 & - \sigma_3 & 2 \sigma_3
 \end{pmatrix}
 = \sigma_1
 \begin{pmatrix}
 2 & -1 & 0 \\
 -1 & 0 & -\frac{\sigma_3}{\sigma_1} \\
 0 & -\frac{\sigma_3}{\sigma_1} & 2\frac{\sigma_3}{\sigma_1}
 \end{pmatrix}, \end{gather*}
 where the second equality is available only if $ \sigma_1 \neq 0 $. In particular, {\it for $ \sigma_1 \neq 0 $ and $ \sigma_3 \neq 0 $, our original $ \fg(A_\bsigma) $ can be also defined via the Cartan matrix}
\begin{gather*} A'_a =
 \begin{pmatrix}
 2 & -1 & 0 \\
 1 & 0 & a \\
 0 & -1 & 2
 \end{pmatrix} \end{gather*}
with $ a := \frac{\sigma_3}{\sigma_1} $. When $ a \not= -1 $, this in turn corresponds~-- following Kac' conventions, up to renumbering the vertices (cf.~\cite[Section~2.5]{K1})~-- to the simple Lie superalgebra attached to the following {\it distinguished} (i.e., with just one odd vertex) Dynkin diagram of type $ D(2,1;a) $
\begin{gather*}
\setlength{\unitlength}{1mm}
\begin{picture}(50,15)(0,0)
\put(0.6,10){$\alpha_1$}
\put(9.6,10){$\alpha_2$}
\put(18.6,10){$\alpha_3$}
\put(2, 5){\circle{3}}
\put(3.5, 5){\line(1,0){6}}
\put(11, 5){\circle{3}}
\put(12.5, 5){\line(1,0){6}}
\put(20, 5){\circle{3}}
\qbezier[90](9.9,4)(11,5)(12.1,6)
\qbezier[90](12.1,4)(11,5)(9.9,6)
\put(6,2.5){${}_1$}
\put(14.8,2.5){${}_a$}
\end{picture}
\end{gather*}
(instead, $ a = -1 $ corresponds to $ \sigma_2 = 0 $, that is a {\it singular} case). Kac' results tell us that, {\it for all $ a \in \C \setminus \{-1,0\} $}, the set of positive roots with respect to $ \Pi' $ is given by
\begin{gather*} \Delta'^{,+} = \big\{ \alpha_1, \alpha_2, \alpha_3, \alpha_1+\alpha_2, \alpha_2+\alpha_3, \alpha_1+\alpha_2+\alpha_3, \alpha_1+2\alpha_2+\alpha_3 \big\},\end{gather*}
while the coroots can be expressed as
\begin{gather*}
 h_{\alpha_1} = H_{\beta_2 + \beta_3} = - ( H_{\beta_2} + H_{\beta_3} ), \qquad h_{\alpha_2} = H_{-\beta_2} = H_{\beta_2}, \\
 h_{\alpha_3} = H_{\beta_2 + \beta_1} = - \big( H_{\beta_2} + H_{\beta_1} \big), \\
 h_{\alpha_1 + \alpha_2} = -H_{\beta_3} = h_{\alpha_1} + h_{\alpha_2}, \qquad h_{\alpha_3 + \alpha_2} = -H_{\beta_1} = h_{\alpha_3} + h_{\alpha_2}, \\
 h_{\alpha_1 + \alpha_2 + \alpha_3} = H_{\beta_1 + \beta_2 + \beta_3} = -H_{\beta_1} - H_{\beta_2} - H_{\beta_3} = h_{\alpha_1} + h_{\alpha_2} + h_{\alpha_3}, \\
 h_{\alpha_1 + 2\alpha_2 + \alpha_3} = H_{\beta_1 + \beta_3} = -H_{\beta_1} - H_{\beta_3} = h_{\alpha_1} + 2 h_{\alpha_2} + h_{\alpha_3}.
\end{gather*}
\end{free text}

\begin{free text} \label{singul-case} {\bf The singular case.} We saw in Section~\ref{1st_Kac-constr} that for every non-singular parameter $ \bsigma \in V^\times $, we have a Lie superalgebra isomorphism $ \Phi_\bsigma\colon \osp(4,2;\bsigma) = \fg_\bsigma {\buildrel \cong \over {\lhook\joinrel\relbar\joinrel\relbar\joinrel\relbar\joinrel\twoheadrightarrow}} \fg(A_\bsigma) $. For singular values $ \bsigma \in V \setminus V^\times $, instead, the epimorphism $ \Phi_\bsigma $ is no longer an isomorphism: indeed, in this case, the Lie ideal $ I_\bsigma $ in $ \tilde{\fg}(A_\bsigma) $ is bigger than in the non-singular case, for instance (as a straightforward calculation shows), we have
\begin{gather*} \big[ X_{\beta_i}, X_{\beta_j} \big] \in I_\bsigma \ \iff \ \sigma_k = 0 \qquad \forall\, \{i,j,k\} = \{1,2,3\}. \end{gather*}

Similarly, the contragredient Lie superalgebra $ \fg (A'_a ) $ of Section~\ref{2nd_Kac-constr} can be defined also for the ``singular value'' $ a = -1 $ (corresponding to $ \sigma_1 + \sigma_3 = 0 $, which is equivalent to $ \sigma_2 = 0 $). However, in this case the corresponding Lie ideal $ I'_a $ in $ \tilde{\fg} (A'_a ) $ is bigger, as we have, for instance,
\begin{gather*} [ [ [ X_{\alpha_1}, X_{\alpha_2} ], X_{\alpha_3} ], X_{\alpha_2} ] \in I'_a \quad \iff \quad a = -1,
\end{gather*}
therefore $ \fg\big(A'_{a=-1}\big) $ has strictly smaller dimension than $ \osp(4,2;\bsigma) $ for $ \bsigma := (1,0,-1) $, whereas $ \fg (A'_a ) \cong \fg_\bsigma := \osp(4,2;\bsigma) $ via $ \Phi_\bsigma $ for all $ \bsigma = (1,a+1,a ) \not= (1,0,-1) $.

 This shows that, in a sense, describing our objects $ \fg_\bsigma = \osp(4,2;\bsigma) $ of Section~\ref{subsect_3.1} as contragredient Lie superalgebras is problematic, so to say, at singular values of $ \bsigma $, in that the contragredient construction yields not the outcome we are looking for.
\end{free text}

\subsection[Bases of $ \fg_\bsigma $]{Bases of $\boldsymbol{\fg_\bsigma}$} \label{bases-g_s}

In this subsection, for any given $ \bsigma \in V^\times $ we sort out three special bases of $ \fg_\bsigma $. Later on (in Section~\ref{forms-degen.s - Lie s-alg.s}) we use them to construct three different families, indexed by~$ V $, of Lie superalgebras: by construction these families will coincide on all non-singular parameters $ \bsigma \in V^\times $ but will actually differ instead on singular values $ \bsigma \in V \setminus V^\times $.

\begin{free text} \label{first-basis} {\bf First basis.} Let $ B := \{ H_{2\vep_1}, H_{2\vep_2}, H_{2\vep_3} \} \bigcup \{ X_\alpha \}_{\alpha \in \Delta} $ and $ B_+ := B \cup \{ H_\theta \} $ be the subsets of $ \fg_\bsigma $ whose elements are defined by
\begin{gather}
 H_{2 \varepsilon_i} := \sigma_i h_i, \qquad X_{\phantom{+}2 \varepsilon_i} := \sigma_i e_i, \qquad X_{-2 \varepsilon_i} := \sigma_i f_i, \qquad i = 1, 2, 3, \qquad H_{\theta} := 2^{-1} \sum_{i=1}^3 \sigma_i h_i, \nonumber\\
X_{\beta_1}\phantom{k} := v_{-,+,+}, \qquad X_{\beta_2} \phantom{k}:= v_{+,-,+}, \qquad X_{\beta_3}\phantom{k} := v_{+,+,-}, \qquad \phantom{k}X_{\theta} := v_{+,+,+}, \nonumber\\
 X_{-\beta_1} := v_{+,-,-}, \qquad X_{-\beta_2} := v_{-,+,-}, \qquad X_{-\beta_3} := v_{-,-,+}, \qquad X_{-\theta} := v_{-,-,-},\label{1st-basis-g}
\end{gather}
 (cf.\ Section~\ref{kapl-real}); the analysis in Section~\ref{kapl-real} tells us that, for every $ \bsigma \in V^\times $, the set $ B $ is a~$ \C $-basis of $ \fg_\bsigma $, hence $ B_+ $ is a spanning set. Again from Section~\ref{subsect_3.1}~-- in particular Section~\ref{mult-table}~-- for the Lie brackets among the elements of~$ B_+ $ we find the following multiplication table
\begin{gather*}
 [ H_{2\vep_i}, H_{2\vep_j} ] = 0, \qquad
 [ H_{2\vep_i}, X_{\pm 2\vep_j} ] = \pm 2 \delta_{i,j} \sigma_i X_{\pm 2\vep_j}, \\
 [ X_{2\vep_i}, X_{2\vep_j} ] = 0, \qquad
 [ X_{-2\vep_i}, X_{-2\vep_j} ] = 0, \qquad
 [ X_{2\vep_i}, X_{-2\vep_j} ] = \delta_{i,j} \sigma_i H_{2\vep_i}, \\
 [ H_{2\vep_i}, X_{\pm\beta_j} ] = \pm {(-1)}^{\delta_{i,j}} \sigma_i X_{\pm\beta_j}, \qquad
 [ H_{2\vep_i}, X_{\pm\theta} ] = \pm \sigma_i X_{\pm\theta}, \\
 [ H_\theta, X_{\pm 2\vep_i} ] = \pm \sigma_i X_{2\vep_i}, \qquad
 [ H_\theta, X_{\pm \beta_i} ] = \mp \sigma_i X_{\pm \beta_i}, \qquad
 [ H_\theta, X_{\pm \theta} ] = 0, \\
 [ X_{2\vep_i}, X_{\beta_j} ] = \delta_{i,j} \sigma_i X_{\theta}, \qquad
 [ X_{2\vep_i}, X_{-\beta_j} ] = (1-\delta_{i,j}) \sigma_i X_{\beta_k}, \\
 [ X_{-2\vep_i}, X_{\beta_j} ] = (1-\delta_{i,j}) \sigma_i X_{-\beta_k}, \qquad
 [ X_{-2\vep_i}, X_{-\beta_j} ] = \delta_{i,j} \sigma_i X_{-\theta}, \\
 [ X_{2\vep_i}, X_{\theta} ] = 0, \quad
 [ X_{2\vep_i}, X_{-\theta}] = \sigma_i X_{-\beta_i}, \qquad
 [ X_{-2\vep_i}, X_{\theta}] = \sigma_i X_{\beta_i}, \qquad
 [ X_{-2\vep_i}, X_{-\theta} ] = 0, \\
 [ X_{\beta_i}, X_{\beta_j} ] = (1-\delta_{i,j}) X_{2\vep_k}, \qquad
 [ X_{-\beta_i}, X_{-\beta_j} ] = -(1-\delta_{i,j}) X_{-2\vep_k}, \\
 [ X_{\beta_i}, X_{-\beta_j} ] = \delta_{i,j} ( H_{2\vep_i} - H_\theta ), \\
 [ X_{\beta_i}, X_{\theta} ] = 0, \qquad
 [ X_{\beta_i}, X_{-\theta} ] = X_{-2\vep_i}, \qquad
 [ X_{-\beta_i}, X_{\theta} ] = -X_{2\vep_i}, \qquad
 [ X_{-\beta_i}, X_{-\theta}] = 0, \\
 [ X_{\theta}, X_{\theta}] = 0, \qquad
 [ X_{\theta}, X_{-\theta} ] = H_\theta, \qquad
 [ X_{-\theta}, X_{-\theta} ] = 0
 \end{gather*}
for all $ i, j \in \{1,2,3\} $, with $ k \in \{1,2,3\} \setminus \{i,j\} $.
\end{free text}

\begin{free text} \label{second-basis} {\bf Second basis.} Let now $ B' := \{ H'_{2\vep_1}, H'_{2\vep_2}, H'_{2\vep_3} \} \bigcup \{ X'_\alpha \}_{\alpha \in \Delta} $ and $ B'_+ := B' \cup \{ H'_\theta \} $ be the subsets of $ \fg_\bsigma $ with elements
\begin{gather}
 H'_{2 \varepsilon_i} := h_i, \qquad X'_{+2 \varepsilon_i} := e_i, \qquad X'_{-2 \varepsilon_i} := f_i, \qquad i = 1, 2, 3, \qquad H'_{\theta} := 2^{-1} \sum_{i=1}^3 \sigma_i h_i, \nonumber\\
 X'_{\beta_1} := v_{-,+,+}, \qquad X'_{\beta_2} := v_{+,-,+}, \qquad X'_{\beta_3} := v_{+,+,-}, \qquad X'_{\theta} := v_{+,+,+}, \nonumber\\
 X'_{-\beta_1} := v_{+,-,-}, \qquad X'_{-\beta_2} := v_{-,+,-}, \qquad X'_{-\beta_3} := v_{-,-,+}, \qquad X'_{-\theta} := v_{-,-,-},\label{2nd-basis-g}
 \end{gather}
 (cf.\ Section~\ref{kapl-real}). Again from Section~\ref{kapl-real} we see that, for every $ \bsigma \in V $ (including also the singular locus), $ B' $ is a $ \C $-basis of $ \fg_\bsigma $, so $ B'_+ $ is a spanning set: indeed, $ B' $ is nothing but a different notation for the natural, built-in $ \C $-basis of $ \fg_\bsigma $ in Kaplansky's realization (cf.\ Section~\ref{kapl-real}). Then from Section~\ref{mult-table} we get the following multiplication table for Lie brackets among elements of~$ B'_+ $
\begin{gather*}
 [ H'_{2\vep_i}, H'_{2\vep_j} ] = 0, \qquad
 [ H'_{2\vep_i}, X'_{\pm 2\vep_j} ] = \pm 2 \delta_{i,j} X'_{\pm 2\vep_j}, \cr
 [ X'_{2\vep_i}, X'_{2\vep_j} ] = 0, \qquad
 [ X'_{-2\vep_i}, X'_{-2\vep_j} ] = 0, \qquad
 [ X'_{2\vep_i}, X'_{-2\vep_j} ] = \delta_{i,j} H'_{2\vep_i}, \\
 [ H'_{2\vep_i}, X'_{\pm\beta_j} ] = \pm {(-1)}^{\delta_{i,j}} X'_{\pm\beta_j}, \qquad
 [ H'_{2\vep_i}, X'_{\pm\theta} ] = \pm X'_{\pm\theta}, \\
 [ H'_\theta, X'_{\pm 2\vep_i} ] = \pm \sigma_i X'_{2\vep_i}, \qquad
 [ H'_\theta, X'_{\pm \beta_i} ] = \mp \sigma_i X'_{\pm \beta_i}, \qquad
 [ H'_\theta, X'_{\pm \theta} ] = 0,\\
 [ X'_{2\vep_i}, X'_{\beta_j} ] = \delta_{i,j} X'_{\theta}, \qquad
 [ X'_{2\vep_i}, X'_{-\beta_j} ] = (1-\delta_{i,j}) X'_{\beta_k}, \\
 [ X'_{-2\vep_i}, X'_{\beta_j} ] = (1-\delta_{i,j}) X'_{-\beta_k}, \qquad
 [ X'_{-2\vep_i}, X'_{-\beta_j} ] = \delta_{i,j} X'_{-\theta}, \\
 [ X'_{2\vep_i}, X'_{\theta} ] = 0, \qquad
 [ X'_{2\vep_i}, X'_{-\theta}] = X'_{-\beta_i}, \qquad
 [ X'_{-2\vep_i}, X'_{\theta}] = X'_{\beta_i}, \qquad
 [ X'_{-2\vep_i}, X'_{-\theta} ] = 0, \\
 [ X'_{\beta_i}, X'_{\beta_j} ] = (1-\delta_{i,j}) \sigma_i X'_{2\vep_k}, \qquad
 [ X'_{-\beta_i}, X'_{-\beta_j} ] = -(1-\delta_{i,j}) \sigma_i X'_{-2\vep_k}, \\
 [ X'_{\beta_i}, X'_{-\beta_j} ] = \delta_{i,j} ( \sigma_i H'_{2\vep_i} - H'_\theta ), \\
 [ X'_{\beta_i}, X'_{\theta} ] = 0, \qquad
 [ X'_{\beta_i}, X'_{-\theta} ] = \sigma_i X'_{-2\vep_i}, \qquad
 [ X'_{-\beta_i}, X'_{\theta} ] = -\sigma_i X'_{2\vep_i}, \qquad
 [ X'_{-\beta_i}, X'_{-\theta}] = 0, \\
 [ X'_{\theta}, X'_{\theta}] = 0, \qquad
 [ X'_{\theta}, X'_{-\theta} ] = H'_\theta, \qquad
 [ X'_{-\theta}, X'_{-\theta} ] = 0,
 \end{gather*}
 for all $ i, j \in \{1,2,3\} $, with $ k \in \{1,2,3\} \setminus \{i,j\} $.
\end{free text}

\begin{free text} \label{third-basis} {\bf Third basis.} Let now $ \bsigma \in V^\times $ (generic case again!). We fix as third basis (and spanning set) of $ \fg_\bsigma $ a suitable blending of the two ones in Sections~\ref{first-basis} and~\ref{second-basis} above. Namely, we set $ B'' := \{ H'_{2\vep_1}, H'_{2\vep_2}, H'_{2\vep_3} \} \bigcup \{ X_\alpha \}_{\alpha \in \Delta} $ and $ B''_+ := B'' \cup \big\{ H'_\theta \big\} $, with elements defined by~\eqref{2nd-basis-g} and~\eqref{1st-basis-g}. Then $ B'' $ is another $ \C $-basis, and $ B''_+ $ another spanning set, of $ \fg_\bsigma $.

 For later use we record hereafter the complete table of Lie brackets among elements of $ B''_+ $, which can be argued at once from those for $ B_+ $ and $ B'_+ $ in Sections~\ref{1st-basis-g} and~\ref{2nd-basis-g}, respectively:
\begin{gather*}
 [ H'_{2\vep_i}, H'_{2\vep_j} ] = 0, \qquad
 [ H'_{2\vep_i}, X_{\pm 2\vep_j} ] = \pm 2 \delta_{i,j} X_{\pm 2\vep_j}, \\
 [ X_{2\vep_i}, X_{2\vep_j} ] = 0, \qquad
 [ X_{-2\vep_i}, X_{-2\vep_j} ] = 0, \qquad
 [ X_{2\vep_i}, X_{-2\vep_j} ] = \delta_{i,j} \sigma_i^{ 2} H'_{2\vep_i}, \\
 [ H'_{2\vep_i}, X_{\pm\beta_j} ] = \pm {(-1)}^{\delta_{i,j}} X_{\pm\beta_j}, \qquad
 [ H'_{2\vep_i}, X_{\pm\theta} ] = \pm X_{\pm\theta}, \\
 [ H'_\theta, X_{\pm 2\vep_i} ] = \pm \sigma_i X_{2\vep_i}, \qquad
 [ H'_\theta, X_{\pm \beta_i} ] = \mp \sigma_i X_{\pm \beta_i}, \qquad
 [ H'_\theta, X_{\pm \theta} ] = 0, \\
 [ X_{2\vep_i}, X_{\beta_j} ] = \delta_{i,j} \sigma_i X_{\theta}, \qquad
 [ X_{2\vep_i}, X_{-\beta_j} ] = (1-\delta_{i,j}) \sigma_i X_{\beta_k}, \\
 [ X_{-2\vep_i}, X_{\beta_j} ] = (1-\delta_{i,j}) \sigma_i X_{-\beta_k}, \qquad
 [ X_{-2\vep_i}, X_{-\beta_j} ] = \delta_{i,j} \sigma_i X_{-\theta}, \\
 [ X_{2\vep_i}, X_{\theta} ] = 0, \qquad
 [ X_{2\vep_i}, X_{-\theta}] = \sigma_i X_{-\beta_i}, \qquad
 [ X_{-2\vep_i}, X_{\theta}] = \sigma_i X_{\beta_i}, \\
 [ X_{-2\vep_i}, X_{-\theta} ] = 0, \qquad
 [ X_{\beta_i}, X_{\beta_j} ] = (1-\delta_{i,j}) X_{2\vep_k}, \\
 [ X_{-\beta_i}, X_{-\beta_j} ] = -(1-\delta_{i,j}) X_{-2\vep_k}, \qquad
 [ X_{\beta_i}, X_{-\beta_j} ] = \delta_{i,j} ( \sigma_i H'_{2\vep_i} - H'_\theta ), \\
 [ X_{\beta_i}, X_{\theta} ] = 0, \qquad
 [ X_{\beta_i}, X_{-\theta} ] = X_{-2\vep_i}, \qquad
 [ X_{-\beta_i}, X_{\theta} ] = -X_{2\vep_i}, \qquad
 [ X_{-\beta_i}, X_{-\theta}] = 0, \\
 [ X_{\theta}, X_{\theta}] = 0, \qquad
 [ X_{\theta}, X_{-\theta} ] = H'_\theta, \qquad
 [ X_{-\theta}, X_{-\theta} ] = 0,
\end{gather*}
for all $ i, j \in \{1,2,3\} $, with $ k \in \{1,2,3\} \setminus \{i,j\} $.
\end{free text}

\section[Integral forms $\&$ degenerations for Lie superalgebras of type $ D(2,1;\bsigma)$]{Integral forms $\&$ degenerations for Lie superalgebras\\ of type $\boldsymbol{D(2,1;\bsigma)}$} \label{forms-degen.s - Lie s-alg.s}

Let $ \mathfrak{l} $ be any Lie (super)algebra over a field $ \K $, and $ R $ any subring of~$ \K $. By {\it integral form of~$ \mathfrak{l} $ over~$ R $}, or {\it (integral) $ R $-form of $ \mathfrak{l} $}, we mean by definition any Lie $ R $-sub(super)algebra $ \mathfrak{t}_R $ of~$ \mathfrak{l} $ whose scalar extension to $ \K $ is~$ \mathfrak{l} $ itself: in other words $ \K \otimes_R \mathfrak{t}_R \cong \mathfrak{l} $ as Lie (super)algebras over~$\K$. In this subsection we introduce five particular integral forms of $ \mathfrak{l} = \fg_{\bsigma} $, and study some remarkable specializations of them. Let $ \Delta:=\Delta^+\cup(-\Delta^+) $ be the root system of $ \fg_\bsigma $.

 From now on, for any $ \bsigma := (\sigma_1, \sigma_2, \sigma_3) \in V := \big\{ \bsigma \in \C^3 \,|\, \sum\limits_{i=1}^3 \sigma_i = 0 \big\} $ we denote by $ \Zbsigma $ the (unital) subring of $ \C $ generated by $ \{ \sigma_1, \sigma_2, \sigma_3 \} $.

 We warn the reader that the choice of a $ \Z[\bsigma] $-form becomes very important when one considers a singular degeneration: one cannot speak instead of {\it the} singular degeneration, in that any degeneration actually depends not only on the specific specialization value taken by $ \bsigma $ but also on the previous choice of a specific $ \Z[\bsigma] $-form, that must be fixed in advance. Some specific features of this phenomenon are presented in Theorems~\ref{thm_g-spec}, \ref{thm_g'-spec}, \ref{thm_g''-spec} etc.

\subsection[First family: the Lie superalgebras $ \fgs $]{First family: the Lie superalgebras $\boldsymbol{\fgs}$} \label{sect-g-sigma}

\begin{free text} \label{subsect-g(sigma)}
 {\bf Construction of the $\boldsymbol{\fgs}$'s.} For any $ \bsigma \in V $ (cf.\ Section~\ref{subsect_3.1}), let $ \fgzs $ be the Lie superalgebra over $ \Zbsigma $ defined as follows. As a $ \Zbsigma $-module, $ \fgzs $ is spanned by the formal set of $ \Zbsigma $-linear generators $ B_\fg := \{ H_{2\vep_1}, H_{2\vep_2}, H_{2\vep_3}, H_\theta \} \bigcup \{ X_\alpha \}_{\alpha \in \Delta} $ subject only to the single relation $ H_{2\vep_1} + H_{2\vep_2} + H_{2\vep_3} = 2 H_\theta $. Then it follows, in particular, that $ \fgzs $ is clearly a free $ \Zbsigma $-module, with basis $ B_\fg \setminus \{H_{2\vep_i}\} $ for any $ 1 \leq i \leq 3 $. The Lie superalgebra structure of $ \fgzs $ is defined by the formulas in Section~\ref{first-basis}, now taken as definitions of the Lie brackets among the (linear) generators of $ \fgzs $ itself.
 Overall, all these $ \fgzs $'s form a family of Lie superalgebras indexed by the points of the complex plane~$ V $. Moreover, taking $ \fgs := \C \otimes_{\raisebox{-.7mm}{\hbox{$ \scriptstyle \Zbsigma $}}} \fgzs $ for all $ \bsigma \in V $ we find a more regular situation, as now these (extended) Lie superalgebras $ \fgs $ all share~$ \C $ as their common ground ring. Moreover,
\begin{gather} \label{gs=g_sing}
 \text{\it in the non-singular case, i.e., for $ \bsigma_i \in V^\times $, we have $ \fgs \cong \fg_{\bsigma} $}
\end{gather}
 by Section~\ref{first-basis} and the very definition of $ \fgs $ itself.
 Indeed, the analysis in Section~\ref{first-basis}~-- describing a $ \C $-basis and its multiplication table for $ \fg_\bsigma $ with $ \bsigma \in V^\times $~-- prove that {\it for all \mbox{$ \bsigma \in V^\times $}, the Lie superalgebra $ \fgzs $ over $ \Zbsigma $ identifies to an integral $ \Zbsigma $-form of $ \fg_\bsigma $}.
 In order to formalize the description of the family $\{ \fgs\}_{\bsigma \in V} $, we proceed as follows. Let $ \Zbx := \Z[V] \cong \Z[x_1,x_2,x_3] / ( x_1 + x_2 + x_ 3 ) $ be the ring of global sections of the $ \Z$-scheme associated with~$ V $. In the construction of~$ \fgzs $, formally replace~$ \bx $ to $ \bsigma $ (hence the $ x_i $'s to the $ \sigma_i $'s): this makes sense, provides a meaningful definition of a Lie superalgebra over $ \Zbx $, denoted by $ \fgzx $, and then $ \fgx := \Cbx \otimes_{\raisebox{-.7mm}{\hbox{$ \scriptstyle \Zbx $}}} \fgzx $ by scalar extension, which is a Lie superalgebra over $ \Cbx:=\C \otimes_{\Z}\Zbx $. Definitions imply that, for any $ \bsigma \in V $, we have a Lie $ \Zbsigma $-superalgebra isomorphism
\begin{gather*} \fgzs \cong \Zbsigma \mathop{\otimes}\limits_\Zbx \fgzx \end{gather*}
-- through the ring isomorphism $ \Zbsigma \cong \Zbx \Big/ \big( x_i-\sigma_i \big)_{i=1,2,3} $~-- and similarly
\begin{gather*}\fgs \cong \C \mathop{\otimes}\limits_{\Cbx} \fgx \end{gather*}
as Lie $ \C $-superalgebras, through the ring isomorphism $ \C \cong \Cbx / ( x_i-\sigma_i )_{i=1,2,3} $. In geometrical language, all this can be formulated as follows. The Lie superalgebra $ \fgx $~-- being a free, finite rank $ \Cbx $-module~-- defines a coherent sheaf $ \mathcal{L}_{ \fg_{{}_\Cbx}} $ of Lie superalgebras over $ \operatorname{Spec} (\Cbx)$. Moreover, there exists a unique fibre bundle
 over $ \operatorname{Spec} (\Cbx) $, say $ \mathbb{L}_{ \fg_{{}_\Cbx}} $, whose sheaf of sections is exactly $ \mathcal{L}_{ \fg_{{}_\Cbx}} $. This fibre bundle can be thought of as a (total) deformation space over the base space $ \operatorname{Spec} (\Cbx) $, in which every fibre can be seen as a ``deformation'' of any other one, and also any single fibre can be seen as a degeneration of the original Lie superalgebra $ \fgx $. Moreover, the fibres of $ \mathbb{L}_{ \fg_{{}_\Cbx}} $ on $ \operatorname{Spec} (\Cbx) = V \cup \{\star\} $ are, by definition, given by $( \mathbb{L}_{ \fg_{{}_\Cbx}})_\bsigma = \C \mathop{\otimes}\limits_\Cbx \fgx \cong \fgs $ for any {\it closed point} $ \bsigma \in V \subseteq \operatorname{Spec} (\Cbx) $, while for the {\it generic point} $ \star \in \operatorname{Spec} (\Cbx) $ we have $( \mathbb{L}_{ \fg_{{}_\Cbx}})_\star = \C(\bx) \mathop{\otimes}\limits_\Cbx \fgx ( =: \fgcpx) $. Finally, it follows from our construction that these sheaf and fibre bundle do admit an action of $ \C^\times \times \fS_3 $, that on the base space $ \operatorname{Spec} (\Cbx) = V \cup \{\star\} $ simply fixes $ \{\star\} $ and is the standard $ \big( \C^\times \times \fS_3 \big) $-action on $ V $. In the next result we describe the structure of these fibres $ ( \mathbb{L}_{ \fg_{{}_\Cbx}})_{\bsigma} \cong \fgs $ for all $ \bsigma \in V $. The outcome is that in the ``regular'' locus $ V^\times $ they are simple (as Lie superalgebras), while in the ``singular'' locus $ V \setminus V^\times $ they are not, and we can describe explicitly their structure.
\end{free text}

\begin{thm} \label{thm_g-spec} Let $ \bsigma \in V $ as above, and set $ \fa_i :=\C X_{2\vep_i} \oplus \C H_{2\vep_i} \oplus \C X_{-2\vep_i} $ with $ X_{2\vep_i} \, $, $ H_{2\vep_i} $ and $ X_{-2\vep_i} $ as defined in Section~{\rm \ref{first-basis}}, for all $ i = 1, 2, 3 $.
\begin{enumerate}\itemsep=0pt
\item[$(1)$] If $ \bsigma \in V^\times $, then the Lie superalgebra $ \fgs $ is simple.
\item[$(2)$] If $ \bsigma \in V \setminus V^\times $, with $ \sigma_i = 0 $ and $ \sigma_j \not= 0 \not= \sigma_k $ for $ \{i,j,k\} = \{1,2,3\} $, then $ \fa_i $ is a {\it central} Lie ideal of $ \fgs $, isomorphic to $ \C^{ 3|0} $, and $ \fgs $ is the universal central extension of $ \mathfrak{psl}(2|2) $ by $ \fa_i $ $($cf.\ {\rm \cite[Theorem 4.7]{IK})}; in other words, there exists a short exact sequence of Lie superalgebras
\begin{gather*} 0 \relbar\joinrel\relbar\joinrel\relbar\joinrel\relbar\joinrel\longrightarrow \C^{ 3|0} \cong \fa_i \relbar\joinrel\relbar\joinrel\relbar\joinrel\relbar\joinrel\longrightarrow \fgs \relbar\joinrel\relbar\joinrel\relbar\joinrel\relbar\joinrel\longrightarrow \mathfrak{psl}(2|2) \relbar\joinrel\relbar\joinrel\relbar\joinrel\relbar\joinrel\longrightarrow 0.
\end{gather*}
A parallel result also holds true when working with $ \fgzs $ over the ground ring $ \Zbsigma $.
\item[$(3)$] If $ \bsigma = \boldsymbol{0}$ $(\! \in V \setminus V^\times ) $, i.e., $ \sigma_h = 0 $ for all $ h \in \{1,2,3\} $, then $ {\fg(\boldsymbol{0})}_\zero \cong \C^{ 9|0} $ is the center of $ \fg(\boldsymbol{0}) $, and the quotient $ \fg(\boldsymbol{0}) \Big/ {\fg(\boldsymbol{0})}_\zero \cong \C^{ 0|8} $ is Abelian; in particular, $ \fg(\boldsymbol{0}) $ is a {\it non-trivial}, {\it non-Abelian} central extension of $ \C^{ 0|8} $ by $ \C^{ 9|0} $, i.e., there exists a short exact sequence of Lie superalgebras, with {\it non-Abelian} middle term,
\begin{gather*} 0 \relbar\joinrel\relbar\joinrel\relbar\joinrel\relbar\joinrel\longrightarrow \C^{ 9|0} \cong {\fg(\boldsymbol{0})}_\zero \relbar\joinrel\relbar\joinrel\relbar\joinrel\relbar\joinrel\longrightarrow \fg(\boldsymbol{0}) \relbar\joinrel\relbar\joinrel\relbar\joinrel\relbar\joinrel\longrightarrow \C^{ 0|8} \relbar\joinrel\relbar\joinrel\relbar\joinrel\relbar\joinrel\longrightarrow 0.
\end{gather*}
 A parallel result holds true when working with $ \fg_{\raisebox{-.6mm}{\hbox{$ \scriptscriptstyle \Z $}}}(\boldsymbol{0}) $ over the ground ring $ \Z[\boldsymbol{0}] = \Z $.
\end{enumerate}
\end{thm}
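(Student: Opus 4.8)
\emph{Overall strategy and part (1).} The whole proof is driven by one structural feature of the ``first basis'' of Section~\ref{first-basis}: every structure constant there is an explicit monomial in the $\sigma_h$'s, so that a factor $\sigma_i$ survives precisely in those brackets that involve the $i$-th triple $X_{\pm 2\vep_i}, H_{2\vep_i}$ or that are produced by the map $p_i$. The plan is therefore to read each singular specialization straight off that table, setting the relevant $\sigma_h$ to $0$. Part~(1) needs no computation: for $\bsigma \in V^\times$ the identification \eqref{gs=g_sing} gives $\fgs \cong \fg_\bsigma$, which is simple by Proposition~\ref{prop: properties-g_sigma}(b).

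\emph{Part (2).} Assume $\sigma_i = 0$ and $\sigma_j, \sigma_k \neq 0$. First I would check that $\fa_i$ is a central ideal: scanning the table, every bracket of $X_{2\vep_i}, H_{2\vep_i}$ or $X_{-2\vep_i}$ with an arbitrary basis vector carries a factor $\sigma_i$ (for instance $[X_{2\vep_i},X_{-2\vep_i}] = \sigma_i H_{2\vep_i}$, $[X_{2\vep_i}, X_{\beta_i}] = \sigma_i X_\theta$, $[H_{2\vep_i}, X_{\pm\theta}] = \pm\sigma_i X_{\pm\theta}$), hence vanishes at $\sigma_i = 0$; this simultaneously shows $\fa_i$ is abelian, so $\fa_i \cong \C^{3|0}$. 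Next I would identify $\fgs\big/\fa_i$ with $\mathfrak{psl}(2|2) = A(1,1)$: its even part collapses to $\fa_j \oplus \fa_k$, each a genuine $\fsl(2)$ because $\sigma_j,\sigma_k \neq 0$ (after the rescaling already used in Section~\ref{1st_Kac-constr}), while the eight odd vectors $X_{\pm\beta_h}, X_{\pm\theta}$ provide two copies of the bimodule $\Box_j\boxtimes\Box_k$ over $\fa_j\oplus\fa_k$; matching the residual brackets against a standard model of $\mathfrak{psl}(2|2)$ gives the isomorphism. For universality I would verify that $\fgs$ is perfect --- e.g.\ $H_\theta = [X_\theta,X_{-\theta}]$, $X_\theta = \sigma_j^{-1}[X_{2\vep_j},X_{\beta_j}]$, $X_{2\vep_i} = [X_{\beta_j},X_{\beta_k}]$, $H_{2\vep_j} = [X_{\beta_j},X_{-\beta_j}]+H_\theta$, and so on recover every generator --- and then invoke \cite[Theorem~4.7]{IK}, which gives $\dim H_2(\mathfrak{psl}(2|2)) = 3$. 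Since a perfect central extension is always a quotient of the universal one, the canonical surjection $\mathfrak{u}(\mathfrak{psl}(2|2)) \twoheadrightarrow \fgs$ is forced to be an isomorphism by the dimension count $\dim\mathfrak{u}(\mathfrak{psl}(2|2)) = (6|8)+(3|0) = (9|8) = \dim\fgs$, whence $\fgs$ is the universal central extension. All these computations already live over $\Zbsigma$, which yields the parallel integral statement.

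\emph{Part (3).} For $\bsigma = \boldsymbol{0}$ every bracket in the table that involves an even generator carries some factor $\sigma_h$ and thus vanishes, so the entire even part $\fg(\boldsymbol{0})_\zero \cong \C^{9|0}$ is central. To see it is the \emph{full} center I would observe that the only surviving brackets are the odd--odd ones, which are $\sigma$-free in this basis ($[X_\theta,X_{-\theta}] = H_\theta$, $[X_{\beta_i},X_{-\beta_i}] = H_{2\vep_i}-H_\theta$, $[X_{\beta_i},X_{\beta_j}] = X_{2\vep_k}$, $[X_{\beta_i},X_{-\theta}] = X_{-2\vep_i}$, etc.), and that the resulting $\fg_\zero$-valued symmetric form on the odd part is non-degenerate: pairing a general odd $z$ against $X_{-\theta}$ and $X_\theta$ forces all its coefficients to vanish. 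Hence no nonzero odd element is central, and the center is exactly $\fg(\boldsymbol{0})_\zero$. Passing to the quotient annihilates all remaining brackets, so $\fg(\boldsymbol{0})\big/\fg(\boldsymbol{0})_\zero \cong \C^{0|8}$ is abelian; since $\fg(\boldsymbol{0})$ itself is non-abelian, it is a non-trivial, non-Abelian central extension of $\C^{0|8}$ by $\C^{9|0}$. The identical argument over $\Z[\boldsymbol{0}] = \Z$ gives the integral version.

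\emph{Main obstacle.} I expect the one genuinely delicate step to be the explicit identification $\fgs\big/\fa_i \cong \mathfrak{psl}(2|2)$ in part~(2). Although conceptually it is just the statement that the $i$-th $\fsl(2)$-factor degenerates and $D(2,1;a)$ limits to $A(1,1)$, carrying it out requires careful bookkeeping of the rescaled brackets and a correct choice of isomorphism onto a standard model; moreover the entire universality conclusion rests on this identification together with the value $\dim H_2(\mathfrak{psl}(2|2)) = 3$ imported from \cite{IK}, so it is here that one must be most attentive.
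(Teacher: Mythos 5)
Your proposal is correct, and its core is the same as the paper's own proof: all three parts are read off by direct inspection of the multiplication table of the first basis (Section~\ref{first-basis}), with part~(1) following from \eqref{gs=g_sing} together with Proposition~\ref{prop: properties-g_sigma}, exactly as in the paper. The one place where you go beyond the published argument is the universality claim in part~(2): the paper's proof only verifies that $\fa_i$ is a central ideal with quotient $\mathfrak{psl}(2|2)$ and delegates universality entirely to the citation of \cite[Theorem~4.7]{IK}, whereas you supply the missing link --- perfectness of $\fgs$ (via explicit brackets such as $H_{2\vep_j}=[X_{\beta_j},X_{-\beta_j}]+[X_\theta,X_{-\theta}]$ and $X_{2\vep_i}=[X_{\beta_j},X_{\beta_k}]$, all valid at $\sigma_i=0$), so that the canonical surjection from the universal central extension exists, and then the dimension count $(6|8)+(3|0)=(9|8)$ forces it to be an isomorphism. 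Similarly, in part~(3) your non-degeneracy check (bracketing a general odd element against $X_{\pm\theta}$) is slightly sharper than the paper's remark that no odd \emph{generator} is central. Both refinements are sound and arguably make the write-up more self-contained than the original.
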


\begin{proof} Part (1) is a direct consequence of \eqref{gs=g_sing} and Proposition \ref{prop: properties-g_sigma}.
 Claim~(2) instead follows at once by direct inspection of the formulas in Section~\ref{first-basis}. For instance, reading the lines in the first and second line in the table of formulas for Lie brackets therein, we see that $ \fa_i $ is Abelian when $ \sigma_i = 0 $, so that $ \fa_i \cong \C^3 $ as claimed. Moreover, the formulas from the third to the seventh in the same table tell us also that all brackets of the generators of $ \fa_i $ with all other generators turn to zero when $ \sigma_i = 0 $: thus $ \fa_i $ is central in $ \fgs $~-- hence, in particular, it is a Lie ideal~-- as claimed. Similarly, a direct verification (setting $ \sigma_i = 0 $ in those formulas) shows that the quotient $ \fgs / \fa_i $ is isomorphic to $ \mathfrak{psl}(2|2) $.

 Claim~(3) follows again by straightforward analysis of the table of formulas in Section~\ref{first-basis}. Indeed, the first two lines in the table describes the structure of the Lie algebra $ {\fgs}_\zero $: when $ \bsigma = \boldsymbol{0} $ they simply tell that this structure is trivial, that is $ \fg(\boldsymbol{0}) $ is Abelian, hence isomorphic to~$ \C^9 $ with trivial Lie bracket. Moreover, the lines from third to seventh in the same table describe the adjoint action of $ \fgs_\zero $ onto $ \fgs_\one $: when $ \bsigma = \boldsymbol{0} $, all the Lie brackets therein turn to zero, so the $ {\fg(\boldsymbol{0})}_\zero $-action is trivial, which means exactly that $ {\fg(\boldsymbol{0})}_\zero $ is central. Finally, the lines from eighth to eleventh describe the Lie brackets among linear generators of $ \fgs_\one $: all these brackets are independent of $ \bsigma $, and prove that none of these generators is central. Therefore we can conclude that the center of $ \fg(\boldsymbol{0}) $ is exactly $ {\fg(\boldsymbol{0})}_\zero $.

 As $ {\fg(\boldsymbol{0})}_\zero $ is the center of $ \fg(\boldsymbol{0}) $, the space $ \fg(\boldsymbol{0}) / {\fg(\boldsymbol{0})}_\zero $ bears the quotient Lie superalgebra structure, entirely odd. Since $ \fg(\boldsymbol{0}) = {\fg(\boldsymbol{0})}_\zero \oplus {\fg(\boldsymbol{0})}_\one $, this Lie structure is automatically trivial (no need of looking at formulas whatsoever \dots) because $ [ {\fg(\boldsymbol{0})}_\zero, {\fg(\boldsymbol{0})}_\zero ] \subseteq {\fg(\boldsymbol{0})}_\zero $. Therefore $ \fg(\boldsymbol{0}) / {\fg(\boldsymbol{0})}_\zero $ is Abelian, and isomorphic to $ \C^{ 0|8} $ because $\dim_\C ( {\fg(\boldsymbol{0})}_\one ) = 8 $.

 Finally, we stress the point that $ \fg(\boldsymbol{0}) $ has {\it non-trivial} structure, as the lines from eighth to eleventh (in the table) display non-zero Lie brackets among some of its generators.
\end{proof}

\subsection[Second family: the Lie superalgebras $ \fgps $]{Second family: the Lie superalgebras $\boldsymbol{\fgps}$}\label{sect_g'-sigma}

\begin{free text} \label{subsect-g'(sigma)}
 {\bf Construction of the $ \fgps $'s.} For any $ \bsigma \in V $ (cf.\ Section~\ref{subsect_3.1}), we define the Lie superalgebra $ \fgpzs $ over $ \Zbsigma $ as follows. As a $ \Zbsigma $-module, $ \fgpzs $ is spanned by the formal set of $ \Zbsigma $-linear generators $ B'_\fg := \{ H'_{2\vep_1}, H'_{2\vep_2}, H'_{2\vep_3}, H'_\theta \} \bigcup \{ X'_\alpha \}_{\alpha \in \Delta} $ subject only to the single relation $ \sigma_1 H'_{2\vep_1} + \sigma_2 H'_{2\vep_2} + \sigma_3 H'_{2\vep_3} = 2 H'_\theta $. The Lie superalgebra structure of $ \fgpzs $ is defined by the formulas in Section~\ref{second-basis}, which now we read as definitions of the Lie brackets among the (linear) generators of $ \fgpzs $ itself.

 Altogether, the $ \fgpzs $'s form a family of Lie superalgebras indexed by the points of $ V $. Then taking $ \fgps := \C \otimes_{\raisebox{-.7mm}{\hbox{$ \scriptstyle \Zbsigma $}}} \fgpzs $ for all $ \bsigma \in V $ we find a more regular situation, as these Lie superalgebras $ \fgps $ now share $ \C $ as their common ground ring. In fact, this is just another manner of presenting the family of Kaplansky's Lie superalgebras $ \fg_\bsigma $, in that
\begin{gather} \label{gps=g_sing}
 \text{\it for all $ \bsigma_i \in V $, we have $ \fgps \cong \fg_{\bsigma} $}
\end{gather}
 by Sections~\ref{kapl-real} and~\ref{second-basis}~-- cf.\ \eqref{2nd-basis-g}~-- and the very construction of $ \fgps $.
 In fact, the analysis in Section~\ref{second-basis} (describing a $ \C $-basis of $ \fg_\bsigma $, for any $ \bsigma \in V $, and its multiplication table) prove that {\it for all $ \bsigma \in V $, the Lie $ \Zbsigma $-superalgebra $ \fgpzs $ identifies to an integral $ \Zbsigma $-form of $ \fg_\bsigma $}.
 We can formalize the description of the family $ \{ \fgps \}_{\bsigma \in V} $ proceeding like in Section~\ref{subsect-g(sigma)}; in particular we keep the same notation, such as $ \Zbx := \Z[V] \cong \Z[x_1,x_2,x_3] / ( x_1 + x_2 + x_ 3 ) $, etc.
 In the construction of $ \fgpzs $, formally replace $ \bx $ to $ \bsigma $: this yields a meaningful definition of a Lie superalgebra over $ \Zbx $, denoted by $ \fgpzx $, and also $ \fgpx := \Cbx \otimes_{\raisebox{-.7mm}{\hbox{$ \scriptstyle \Zbx $}}} \fgpzx $ by scalar extension. Now definitions imply that, for any $ \bsigma \in V $, we have a Lie $ \Zbsigma $-superalgebra isomorphism
\begin{gather*} \fgpzs \cong \Zbsigma \mathop{\otimes}\limits_\Zbx \fgpzx \end{gather*}
-- through the ring isomorphism $ \Zbsigma \cong \Zbx / ( x_i-\sigma_i )_{i=1,2,3} $~-- and similarly
\begin{gather*} \fgps \cong \C \mathop{\otimes}\limits_{\Cbx} \fgpx \end{gather*}
 as Lie $ \C $-superalgebras, through the ring isomorphism $ \C \cong \Cbx / ( x_i-\sigma_i )_{i=1,2,3} $.

 One can argue similarly as in Section~\ref{subsect-g(sigma)} to have a geometric picture of the above description: this amounts to literally replacing $ \fgs $ with $ \fgps $, which eventually provide a coherent sheaf $ \cL_{ \fg'_{{}_\Cbx}} $ of complex Lie superalgebras over $ V $, with a $ ( \C^\times \times \fS_3 ) $-action on it, and a corresponding fibre bundle $ \mathbb{L}_{ \fg'_{{}_\Cbx}} $, whose fibres are the $ \fgps $'s; details are left to the reader. In the next result we describe these Lie superalgebras $ \fgps$ $({\cong} \fg_\bsigma) $, for all $ \bsigma \in V $: like for the $ \fgs $'s, the outcome is again that in the ``regular'' locus $ V^\times $ they are simple, while in the ``singular'' locus $ V \setminus V^\times $ we can describe explicitly their non-simple structure.
\end{free text}

\begin{thm} \label{thm_g'-spec} Given $ \bsigma \in V $, let $ \fa'_i := \C X'_{2\vep_i} \oplus \C H'_{2\vep_i} \oplus \C X'_{-2\vep_i} $, for all $ i \in \{1,2,3\} $.
\begin{enumerate}\itemsep=0pt
\item[$(1)$] If $ \bsigma \in V^\times $, then the Lie superalgebra $ \fgps $ is simple.
\item[$(2)$] If $ \bsigma \in V \setminus V^\times $, with $ \sigma_i = 0 $ and $ \sigma_j \not= 0 \not= \sigma_k $ for $ \{i,j,k\} = \{1,2,3\} $, then if
\begin{gather*} \mathfrak{b}'_i :=
 \bigg( \sum_{\alpha \not= \pm 2\vep_i} \C X'_\alpha \bigg) \oplus \bigg( \sum_{j \not= i} \C H'_{2\vep_j} \bigg)
\end{gather*}
we have $ \mathfrak{b}'_i \trianglelefteq \fgps $ $($a Lie ideal$)$, $ \fa'_i \leq \fgps $ $($a Lie subsuperalgebra$)$, and there exist isomorphisms $ \mathfrak{b}'_i \cong \mathfrak{psl}(2|2) $, $ \fa'_i \cong \fsl(2) $ and $ \fgps \cong \fsl(2) \ltimes \mathfrak{psl}(2|2) $~-- a semidirect product of Lie superalgebras. In short, there exists a {\it split} short exact sequence
\begin{gather*} 0 \relbar\joinrel\relbar\joinrel\relbar\joinrel\longrightarrow \mathfrak{psl}(2|2) \cong \mathfrak{b}'_i \relbar\joinrel\relbar\joinrel\relbar\joinrel\longrightarrow \fgps {\buildrel
{{\displaystyle \dashleftarrow \hskip-4pt \text{$-$} \text{$-$} \text{$-$}}} \over {\relbar\joinrel\relbar\joinrel\relbar\joinrel\relbar\joinrel\longrightarrow}} \fa'_i \cong \fsl(2) \relbar\joinrel\relbar\joinrel\relbar\joinrel\longrightarrow 0.
\end{gather*}
 A parallel result also holds true when dealing with $ \fgpzs $ over the ground ring $ \Zbsigma $.
\item[$(3)$] If $ \bsigma = \boldsymbol{0}$ $(\! \in V \setminus V^\times ) $, i.e., $ \sigma_h = 0 $ for all $ h \in \{1,2,3\} $, then $ {\fg'(\boldsymbol{0})}_\zero \cong {\fsl(2)}^{\oplus 3} $ as Lie $($super$)$algebras, the Lie $($super$)$bracket is trivial on $ {\fg'(\boldsymbol{0})}_\one $ and $ {\fg'(\boldsymbol{0})}_\one \cong \Box^{ \boxtimes 3} $ as modules over $ {\fg'(\boldsymbol{0})}_\zero \cong {\fsl(2)}^{\oplus 3} $. Finally, we have
\begin{gather*} \fg'(\boldsymbol{0}) \cong {\fg'(\boldsymbol{0})}_\zero \ltimes {\fg'(\boldsymbol{0})}_\one \cong {\fsl(2)}^{\oplus 3} \ltimes \Box^{ \boxtimes 3}
\end{gather*}
-- a semidirect product of Lie superalgebras. In other words, there is a {\it split} short exact sequence
\begin{gather*} 0 \relbar\joinrel\relbar\joinrel\relbar\joinrel\relbar\joinrel\longrightarrow \Box^{ \boxtimes 3} \cong {\fg'(\boldsymbol{0})}_\one \relbar\joinrel\relbar\joinrel\relbar\joinrel\relbar\joinrel\longrightarrow \fg'(\boldsymbol{0}) {\buildrel
{{\displaystyle \dashleftarrow \hskip-4pt \text{$-$} \text{$-$} \text{$-$}}} \over {\relbar\joinrel\relbar\joinrel\relbar\joinrel\relbar\joinrel\longrightarrow}} {\fg'(\boldsymbol{0})}_\zero \cong {\fsl(2)}^{\oplus 3} \relbar\joinrel\relbar\joinrel\relbar\joinrel\relbar\joinrel\longrightarrow 0.
\end{gather*}
 A parallel result holds true when working with $ \fg_{\raisebox{-.6mm}{\hbox{$ \scriptscriptstyle \Z $}}}(\boldsymbol{0}) $ over the ground ring $ \Z[\boldsymbol{0}] = \Z $.
\end{enumerate}
\end{thm}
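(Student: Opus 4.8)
The entire statement can be proved, exactly as in Theorem~\ref{thm_g-spec}, by direct inspection of the multiplication table for $ B'_+ $ recorded in Section~\ref{second-basis} (whose structure constants agree with those of Section~\ref{mult-table}), specialising the relevant $ \sigma_i $'s to zero. Part~(1) is immediate: by~\eqref{gps=g_sing} we have $ \fgps \cong \fg_\bsigma $, so Proposition~\ref{prop: properties-g_sigma}$(b)$ gives simplicity precisely for $ \bsigma \in V^\times $. The only step that is not pure bookkeeping is the final identification $ \mathfrak{b}'_i \cong \mathfrak{psl}(2|2) $ in part~(2); I flag it now as the main obstacle and return to it below.

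For part~(2) I would fix $ \sigma_i = 0 $ with $ \sigma_j \neq 0 \neq \sigma_k $. First observe that in the primed basis the $ \fsl(2) $-triple relations $ [H'_{2\vep_i}, X'_{\pm 2\vep_i}] = \pm 2 X'_{\pm 2\vep_i} $ and $ [X'_{2\vep_i}, X'_{-2\vep_i}] = H'_{2\vep_i} $ carry \emph{no} $ \sigma $-factor, so $ \fa'_i \cong \fsl(2) $ for every $ \bsigma $, and $ \fa'_i \leq \fgps $. To show $ \mathfrak{b}'_i \trianglelefteq \fgps $ I would verify $ [\fgps, \mathfrak{b}'_i] \subseteq \mathfrak{b}'_i $ line by line; the decisive point is that the only brackets among elements of $ \mathfrak{b}'_i $ that could land in $ \fa'_i $ are the odd--odd ones landing in $ \C X'_{\pm 2\vep_i} $, namely $ [X'_{\beta_j}, X'_{\beta_k}] $ and $ [X'_{-\beta_j}, X'_{-\beta_k}] $, which carry the factor $ \sigma_i $ and hence vanish, together with $ [X'_{\beta_i}, X'_{-\beta_i}] = \sigma_i H'_{2\vep_i} - H'_\theta = -H'_\theta $, which lies in $ \mathfrak{b}'_i $ because the defining relation $ \sigma_1 H'_{2\vep_1} + \sigma_2 H'_{2\vep_2} + \sigma_3 H'_{2\vep_3} = 2 H'_\theta $ now reads $ 2H'_\theta = \sigma_j H'_{2\vep_j} + \sigma_k H'_{2\vep_k} \in \textstyle\sum_{l \neq i} \C H'_{2\vep_l} $. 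A dimension count then gives $ \fgps = \fa'_i \oplus \mathfrak{b}'_i $ as $ \C $-vector spaces (of superdimensions $ 3|0 $ and $ 6|8 $), with $ \fa'_i $ a subalgebra and $ \mathfrak{b}'_i $ an ideal, whence $ \fgps \cong \fa'_i \ltimes \mathfrak{b}'_i $ and the claimed split short exact sequence. Finally I would recognise $ \mathfrak{b}'_i $: its even part is $ \fa'_j \oplus \fa'_k \cong \fsl(2) \oplus \fsl(2) $ and its odd part is the $ 8 $-dimensional span of $ \{X'_{\pm\beta_l}, X'_{\pm\theta}\} $, and matching the resulting bracket table with the standard presentation of $ \mathfrak{psl}(2|2) $ — the very same algebra that arises as the quotient $ \fgs/\fa_i $ in the proof of Theorem~\ref{thm_g-spec}$(2)$, cf.\ \cite[Theorem~4.7]{IK} — yields $ \mathfrak{b}'_i \cong \mathfrak{psl}(2|2) $ and hence $ \fgps \cong \fsl(2) \ltimes \mathfrak{psl}(2|2) $.

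For part~(3) I set $ \sigma_h = 0 $ for all $ h $. The first two lines of the table are $ \sigma $-free, so the three triples $ \fa'_h \cong \fsl(2) $ survive and mutually commute; since the defining relation degenerates to $ 2H'_\theta = 0 $ (so $ H'_\theta = 0 $) while $ H'_{2\vep_1}, H'_{2\vep_2}, H'_{2\vep_3} $ remain independent, one gets $ {\fg'(\boldsymbol{0})}_\zero \cong {\fsl(2)}^{\oplus 3} $. Every odd--odd bracket in the table carries a $ \sigma $-factor, and $ [X'_{\beta_i}, X'_{-\beta_i}] = \sigma_i H'_{2\vep_i} - H'_\theta $ and $ [X'_\theta, X'_{-\theta}] = H'_\theta $ also vanish, so $ \big[ {\fg'(\boldsymbol{0})}_\one, {\fg'(\boldsymbol{0})}_\one \big] = 0 $. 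The even--odd brackets are $ \sigma $-independent and reproduce exactly the natural action of $ {\fsl(2)}^{\oplus 3} $ on $ \Box_1 \boxtimes \Box_2 \boxtimes \Box_3 $ (Kaplansky's Step~$(b)$), so $ {\fg'(\boldsymbol{0})}_\one \cong \Box^{\boxtimes 3} $ as a $ {\fg'(\boldsymbol{0})}_\zero $-module. As $ {\fg'(\boldsymbol{0})}_\zero $ is a subalgebra complementary to the abelian odd ideal $ {\fg'(\boldsymbol{0})}_\one $, we conclude $ \fg'(\boldsymbol{0}) \cong {\fsl(2)}^{\oplus 3} \ltimes \Box^{\boxtimes 3} $ with the stated split sequence.

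The $ \Zbsigma $-statements follow from the very same inspection: all structure constants already lie in $ \Zbsigma $, the relation $ \sum_l \sigma_l H'_{2\vep_l} = 2H'_\theta $ is integral, and the subobjects $ \fa'_i $ and $ \mathfrak{b}'_i $ — taken as the $ \Zbsigma $-spans of the corresponding generators ($ H'_\theta $ included) — are defined over $ \Zbsigma $, so the $ \C $-decompositions above are simply the scalar extensions of the analogous $ \Zbsigma $-splittings. As noted, the one genuinely structural input, as opposed to mechanical table-chasing, is the isomorphism $ \mathfrak{b}'_i \cong \mathfrak{psl}(2|2) $, which I would secure by reusing the identification already carried out for the $ \fgs $-family in Theorem~\ref{thm_g-spec}$(2)$.
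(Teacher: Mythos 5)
Your proof is correct and takes essentially the same route as the paper's: part (1) from \eqref{gps=g_sing} together with Proposition~\ref{prop: properties-g_sigma}, and parts (2)--(3) by direct inspection of the multiplication table in Section~\ref{second-basis}, hinging on exactly the same critical points the paper singles out (the odd--odd brackets carrying the factor $ \sigma_i $, and $ H'_\theta \in \fb'_i $ precisely when $ \sigma_i = 0 $ via the relation $ \sigma_1 H'_{2\vep_1} + \sigma_2 H'_{2\vep_2} + \sigma_3 H'_{2\vep_3} = 2 H'_\theta $), with $ \fb'_i \cong \mathfrak{psl}(2|2) $ left, as in the paper, to matching bracket tables. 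The only organizational difference --- you check the ideal property of $ \fb'_i $ directly and split off $ \fa'_i $ by a dimension count, whereas the paper shows $ \fb'_i $ is an $ \fa'_i $-stable subsuperalgebra complementary to $ \fa'_i $ and deduces it is an ideal --- is immaterial.
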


\begin{proof} Part (1) is a direct consequence of \eqref{gps=g_sing} and Proposition~\ref{prop: properties-g_sigma}. Claim~(2) instead follows easily from direct inspection of the formulas in Section~\ref{second-basis}. Indeed, $ \fa'_i \cong \fsl(2) $ follows from the first two lines of the table of formulas for Lie brackets in Section~\ref{second-basis}, which also show that $ \fa'_i $ is a Lie subsuperalgebra of $ \fgps $~-- for all $ \bsigma \in V $, indeed.

 Similarly, those formulas show that $ \fb'_i $ is stable by the adjoint $ \fa'_i $-action exactly if and only if $ \sigma_i = 0 $: in fact, the critical point is that $ H'_\theta = 2^{-1} \sum\limits_{k=1}^3 \sigma_k H'_{2\vep_k} \in \fb'_i \iff \sigma_i = 0 $, and also $ [ X'_{2\vep_i}, H'_\theta ] = \mp \sigma_i X'_{2\vep_i} \in \fb'_i \iff \sigma_i = 0 $.

 Moreover, the same formulas altogether show also that $ \fb'_i $ is a Lie subsuperalgebra if and only if $ \sigma_i = 0 $~-- for instance, because $ \big[ X'_{\beta_j}, X'_{\beta_k} \big] = (1 - \delta_{j,k}) \sigma_i X'_{2\vep_i} \in \fb'_i \iff \sigma_i = 0 $. Then, when $ \sigma_i = 0 $, looking closely at the specific form of the Lie (sub)superalgebra $ \fb'_i $ these formulas also show that $ \fb'_i \cong \mathfrak{psl}(2|2) $ as claimed.

 Finally, as $ \fa'_i $ is a direct sum complement of $ \fb'_i $ in $ \fgps $, since $ \fb'_i $ is a Lie subsuperalgebra and is $ \fa'_i $-stable (when $ \sigma_i = 0 $) we conclude that it is also a Lie ideal, q.e.d.

 Claim~(3) follows again by straightforward analysis of the table of formulas in Section~\ref{second-basis}~-- just a matter of sheer bookkeeping.
\end{proof}

\subsection[Third family: the Lie superalgebras $ \fgss $]{Third family: the Lie superalgebras $\boldsymbol{\fgss}$}\label{sect-g''-sigma}

\begin{free text} \label{subsect-g''_Z[ts]} {\bf Construction of the $\boldsymbol{\fgss}$'s.} For any $ \bsigma \in V $ (cf.\ Section~\ref{subsect_3.1}), we define the Lie superalgebra $ \fgszs $ over $ \Zbsigma $ as follows. By definition, $ \fgszs $ is the $ \Zbsigma $-module spanned by the set of formal (linear) generators $ B_{\fg''} := \{ H'_{2\vep_1}, H'_{2\vep_2}, H'_{2\vep_3}, H'_\theta \} \bigcup \{ X_\alpha \}_{\alpha \in \Delta} $ subject only to the single relation $ \sigma_1 H'_{2\vep_1} + \sigma_2 H'_{2\vep_2} + \sigma_3 H'_{2\vep_3} = 2 H'_\theta $. The Lie superalgebra structure of $ \fgszs $ is defined by the formulas in Section~\ref{third-basis}, which now must be read as definitions for Lie brackets among the (linear) generators of $ \fgszs $.

 The $ \fgszs $'s altogether form a family of Lie superalgebras indexed by the points of $ V $. Setting $ \fgss := \C \otimes_{\raisebox{-.7mm}{\hbox{$ \scriptstyle \Zbsigma $}}} \fgszs $ for all $ \bsigma \in V $ we find a more regular situation, in that these Lie superalgebras $ \fgss $ share $ \C $ as their common ground ring. In addition
\begin{gather} \label{gss=g_sing}
 \text{\it in the {\it non-singular case}, i.e., for $ \bsigma_i \in V^\times $, we have $ \fgss \cong \fg_{\bsigma} $}
\end{gather}
 by Section~\ref{third-basis} and the very definition of $ \fgss $ itself. Indeed, from Section~\ref{third-basis}~-- where a~$ \C $-basis and its multiplication table for $ \fg_\bsigma $, with $ \bsigma \in V^\times $, are described~-- we see that {\it for all $ \bsigma \in V^\times $, the Lie superalgebra $ \fgszs $ over $ \Zbsigma $ identifies to an integral $ \Zbsigma $-form of~$ \fg_\bsigma $}.

Keeping notation as before, we can describe the family $\{ \fgss \}_{\bsigma \in V} $ in a formal way, taking its ``version over $ \Zbx $'', denoted by $ \fgszx $~-- just replacing the complex parameters $ (\sigma_1, \sigma_2, \sigma_3) =: \bsigma $ with a triple of formal parameters $ (x_1,x_2,x_3) =: \bx $ adding to zero~-- and its complex-based counterpart $ \fgsx := \Cbx \otimes_{\raisebox{-.7mm}{\hbox{$ \scriptstyle \Zbx $}}} \fgszx $. Then the very construction implies that, for any $ \bsigma \in V $, one has a Lie $ \Zbsigma $-superalgebra isomorphism
\begin{gather*} \fgszs \cong \Zbsigma \mathop{\otimes}\limits_\Zbx \fgszx \end{gather*}
-- through $ \Zbsigma \cong \Zbx / ( x_i-\sigma_i )_{i=1,2,3} $~-- and similarly
\begin{gather*} \fgss \cong \C \mathop{\otimes}\limits_{\Cbx} \fgsx \end{gather*}
as Lie $ \C $-superalgebras, through $ \C \cong \Cbx / ( x_i-\sigma_i )_{i=1,2,3} $. Finally, the reader can easily mimick what is done in Section~\ref{subsect-g(sigma)} and find a geometric description of the family of the $ \fgss $'s.

Like in Section~\ref{subsect-g(sigma)}, we can re-cast all this in geometrical terms, defining a coherent sheaf~$ \cL_{\fg''_{{}_\Cbx}} $ of complex Lie superalgebras over $ V $, with a $ ( \C^\times \times \fS_3 ) $-action on it, and a corresponding fibre bundle $ \mathbb{L}_{ \fg''_{{}_\Cbx}} $ with the $ \fgss $'s as fibres; the reader can easily fill in the details.

 In the next result we describe these Lie superalgebras $ \fgss $, for all $ \bsigma \in V $: like for the previous two families, the outcome is that in the ``regular'' locus $ V^\times $ they are simple, while in the ``singular'' locus $ V \setminus V^\times $ we can describe explicitly their non-simple structure.
\end{free text}

\begin{thm} \label{thm_g''-spec} Let $ \bsigma \in V $, and set $ \fc''_i := ( \C X_{2\vep_i} \oplus \C X_{-2\vep_i} ) $, $ \overline{\fb''_i} := \fgs / \fc''_i $~-- a subsuperspace and a quotient superspace of $ \fgss $, in general~-- for all $ i \in \{1,2,3\} $.
\begin{enumerate}\itemsep=0pt
\item[$(1)$] If $ \bsigma \in V^\times $, then the Lie superalgebra $ \fgss $ is simple.
\item[$(2)$] If $ \bsigma \in V \setminus V^\times $, with $ \sigma_i = 0 $ and $ \sigma_j \not= 0 \not= \sigma_k $ for $ \{i,j,k\} = \{1,2,3\} $, then $ \fc''_i := ( \C X_{2\vep_i} \oplus \C X_{-2\vep_i} ) $ is an Abelian Lie ideal of $ \fgss $, hence $ \overline{\fb''_i} $ is a quotient Lie superalgebra of~$ \fgss $; therefore, there exists a short exact sequence
\begin{gather*} 0 \relbar\joinrel\relbar\joinrel\relbar\joinrel\longrightarrow \fc''_i := \big( \C X_{2\vep_i} \oplus \C X_{-2\vep_i} \big) \relbar\joinrel\relbar\joinrel\relbar\joinrel\longrightarrow \fgss \relbar\joinrel\relbar\joinrel\relbar\joinrel\longrightarrow \overline{\fb''_i} \relbar\joinrel\relbar\joinrel\relbar\joinrel\longrightarrow 0.
\end{gather*}
Furthermore, setting $ \overline{\fd''_i} := \big( \bigoplus_{j \not= i} \big( \C \overline{H'_{2\vep_j}} \oplus \C \overline{X_{+2\vep_j}} \oplus \C \overline{X_{-2\vep_j}} \big) \big) \bigoplus \big( \bigoplus_{\gamma \in \Delta_\one} \C \overline{X_\gamma} \big) $~-- in the quotient Lie superalgebra $ \overline{\fb''_i} $~-- we have that
$ \overline{\fd''_i} $ is a Lie ideal and $ \C \overline{H'}_{2\vep_i} $ is a Lie subsuperalgebra of $ \overline{\fb''_i} $, with $ \overline{\fd''_i} \cong {\mathfrak{psl}(2|2)} $, $ \C \overline{H'}_{2\vep_i} \cong \C $, and $ \overline{\fb''_i} \cong ( \C \overline{H'}_{2\vep_i} ) \ltimes \overline{\fd''_i} $~-- a~semidirect product of Lie superalgebras. In short, there exists a {\it split} short exact sequence
\begin{gather*} 0 \relbar\joinrel\relbar\joinrel\relbar\joinrel\relbar\joinrel\longrightarrow {\mathfrak{psl}(2|2)} \cong \overline{\fd''_i} \relbar\joinrel\relbar\joinrel\relbar\joinrel\relbar\joinrel\longrightarrow \overline{\fb''_i} {\buildrel
{{\displaystyle \dashleftarrow \hskip-4pt \text{$-$} \hskip-0pt \text{$-$} \hskip0pt \text{$-$}}} \over {\relbar\joinrel\relbar\joinrel\relbar\joinrel\relbar\joinrel\longrightarrow}} \C \overline{H'}_{2\vep_i} \cong \C \relbar\joinrel\relbar\joinrel\relbar\joinrel\relbar\joinrel\longrightarrow 0.
\end{gather*}
A parallel result holds true when working with $ \fgszs $ over the ground ring $ \Zbsigma $.
\item[$(3)$] If $ \bsigma = \boldsymbol{0} ( \!\in V \setminus V^\times ) $, i.e., $ \sigma_h = 0 $ for all $ h \in \{1,2,3\} $, then $ \fc'' := \bigoplus\limits_{i=1}^3 \fc''_i $ is an Abelian Lie ideal, hence $ \overline{\fb''} := \fgss / \fc'' $ is a quotient Lie superalgebra $($of $ \fgss )$; therefore, there exists a short exact sequence
\begin{gather*} 0 \relbar\joinrel\relbar\joinrel\relbar\joinrel\relbar\joinrel\longrightarrow \fc'' := {\textstyle \bigoplus_{i=1}^3} \fc''_i \relbar\joinrel\relbar\joinrel\relbar\joinrel\relbar\joinrel\longrightarrow \fg''(\boldsymbol{0}) \relbar\joinrel\relbar\joinrel\relbar\joinrel\relbar\joinrel\longrightarrow \overline{\fb''} \relbar\joinrel\relbar\joinrel\relbar\joinrel\relbar\joinrel\longrightarrow 0.
\end{gather*}
Moreover, there exists a second, {\it split} short exact sequence
\begin{gather*} 0 \relbar\joinrel\relbar\joinrel\relbar\joinrel\relbar\joinrel\longrightarrow {\textstyle \bigoplus_{\alpha \in \Delta_\one}} \C \overline{X}_\alpha \relbar\joinrel\relbar\joinrel\relbar\joinrel\relbar\joinrel\longrightarrow \overline{\fb''} {\buildrel
{{\displaystyle \dashleftarrow \hskip-4pt \text{$-$} \text{$-$} \text{$-$}}} \over {\relbar\joinrel\relbar\joinrel\relbar\joinrel\relbar\joinrel\longrightarrow}} {\textstyle \bigoplus_{i=1}^3} \C \overline{H'}_{2\vep_i} \relbar\joinrel\relbar\joinrel\relbar\joinrel\relbar\joinrel\longrightarrow 0,
\end{gather*}
 where $ {\textstyle \bigoplus_{\alpha \in \Delta_\one}} \C \overline{X}_\alpha \trianglelefteq \overline{\fb''} $ is an {\it Abelian} Lie ideal and $ {\textstyle \bigoplus_{i=1}^3} \C \overline{H'}_{2\vep_i} \leq \overline{\fb''} $ is an {\it Abelian} Lie subsuperalgebra $($of $ \overline{\fb''})$, so that $ \overline{\fb''} \cong \big( {\textstyle \bigoplus_{i=1}^3} \C \overline{H'}_{2\vep_i} \big) \ltimes \big( {\textstyle \bigoplus_{\alpha \in \Delta_\one}} \C \overline{X}_\alpha \big) $~-- a semidirect product of Lie superalgebras.

 A parallel result holds true when working with $ \fg''(\boldsymbol{0}) $ over the ground ring $ \Z[\boldsymbol{0}] = \Z $.
\end{enumerate}
\end{thm}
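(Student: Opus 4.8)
The plan is to treat the three parts exactly in the spirit of the proofs of Theorems~\ref{thm_g-spec} and~\ref{thm_g'-spec}: part~(1) is purely formal, while parts~(2) and~(3) reduce to reading off the multiplication table of Section~\ref{third-basis} under the relevant vanishing of the $ \sigma $'s, with a single genuine structural identification to perform. For part~(1) I would simply invoke~\eqref{gss=g_sing}, giving $ \fgss \cong \fg_\bsigma $ for $ \bsigma \in V^\times $, and then apply Proposition~\ref{prop: properties-g_sigma}$(b)$, which asserts that $ \fg_\bsigma $ is simple precisely on $ V^\times $.

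For part~(2), assume $ \sigma_i = 0 $ and $ \sigma_j \neq 0 \neq \sigma_k $. First I would verify that $ \fc''_i $ is an Abelian Lie ideal by direct inspection of the table: abelianness is immediate because $ [ X_{2\vep_i}, X_{-2\vep_i} ] = \sigma_i^2 H'_{2\vep_i} = 0 $ (the $ \sigma_i^2 $ of the third-basis table is decisive here, in contrast with the first and second families), and the ideal property holds since every bracket of $ X_{\pm 2\vep_i} $ against another generator carries a factor $ \sigma_i $ and hence vanishes, the sole exception being $ [ H'_{2\vep_i}, X_{\pm 2\vep_i} ] = \pm 2 X_{\pm 2\vep_i} \in \fc''_i $. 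This yields the first short exact sequence. Passing to $ \overline{\fb''_i} = \fgss / \fc''_i $, I would check that $ \overline{\fd''_i} $ is a Lie ideal: the only brackets that could leave it must produce an $ \overline{H'_{2\vep_i}} $-component or an $ \overline{X_{\pm 2\vep_i}} $-component, and both are controlled, since the vectors $ X_{\pm 2\vep_i} $ are zero in the quotient (so e.g.\ $ [ X_{\beta_j}, X_{\beta_k} ] = \overline{X_{2\vep_i}} = 0 $ for $ \{i,j,k\} = \{1,2,3\} $), while the only dangerous bracket $ [ X_{\beta_i}, X_{-\beta_i} ] = \sigma_i H'_{2\vep_i} - H'_\theta $ collapses to $ -\overline{H'_\theta} $ (as $ \sigma_i = 0 $), which lies in $ \C \overline{H'_{2\vep_j}} \oplus \C \overline{H'_{2\vep_k}} \subseteq \overline{\fd''_i} $ through the defining relation. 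The complementary line $ \C \overline{H'_{2\vep_i}} $ is trivially a subsuperalgebra and acts nontrivially (e.g.\ $ [ H'_{2\vep_i}, X_{\pm\beta_j} ] \neq 0 $), which gives the split short exact sequence and the claimed semidirect decomposition.

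The one step that is not sheer bookkeeping is the isomorphism $ \overline{\fd''_i} \cong \mathfrak{psl}(2|2) $, which I expect to be the only real obstacle. Here I would identify the even part of $ \overline{\fd''_i} $, spanned by $ \{ H'_{2\vep_j}, X_{\pm 2\vep_j}, H'_{2\vep_k}, X_{\pm 2\vep_k} \} $, with $ \fsl(2) \oplus \fsl(2) $ (each summand a genuine $ \fsl(2) $ since $ \sigma_j, \sigma_k \neq 0 $, after rescaling away $ \sigma_j^2, \sigma_k^2 $), recognize the $ 8 $-dimensional odd part as the bifundamental $ \Box \boxtimes \Box $, and observe that the odd--odd brackets close into the Cartan of $ \fsl(2) \oplus \fsl(2) $ alone -- for instance $ [ X_\theta, X_{-\theta} ] = \overline{H'_\theta} $ and $ [ X_{\beta_i}, X_{-\beta_i} ] = -\overline{H'_\theta} $ -- with no separate central direction surviving. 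This is exactly what distinguishes $ \mathfrak{psl}(2|2) $ from $ \fsl(2|2) $, and it is precisely the structure already isolated in Theorem~\ref{thm_g-spec}$(2)$ and Theorem~\ref{thm_g'-spec}$(2)$, so I would appeal to those (or to \cite[Theorem~4.7]{IK}) rather than redo the verification.

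For part~(3), set $ \bsigma = \boldsymbol{0} $. The crucial initial remark is that the defining relation $ \sigma_1 H'_{2\vep_1} + \sigma_2 H'_{2\vep_2} + \sigma_3 H'_{2\vep_3} = 2 H'_\theta $ now forces $ H'_\theta = 0 $. I would then check, as in part~(2), that $ \fc'' = \bigoplus_{i=1}^3 \fc''_i $ is an Abelian Lie ideal (every off-diagonal bracket carries a $ \sigma $-factor, hence vanishes), giving the first short exact sequence. In the quotient $ \overline{\fb''} = \fg''(\boldsymbol{0}) / \fc'' $ all the $ X_{\pm 2\vep_i} $ vanish and $ H'_\theta = 0 $, so every odd--odd bracket is zero (e.g.\ $ [ X_{\beta_i}, X_{\beta_j} ] = \overline{X_{2\vep_k}} = 0 $, $ [ X_{\beta_i}, X_{-\beta_i} ] = \sigma_i H'_{2\vep_i} - H'_\theta = 0 $, $ [ X_\theta, X_{-\theta} ] = H'_\theta = 0 $); hence $ \bigoplus_{\alpha \in \Delta_\one} \C \overline{X}_\alpha $ is an Abelian ideal, the complementary $ \bigoplus_{i=1}^3 \C \overline{H'_{2\vep_i}} $ is an Abelian subsuperalgebra, and their nontrivial action produces the second, split short exact sequence and the semidirect decomposition. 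Finally, the parallel statements over $ \Zbsigma $ (resp.\ over $ \Z $) follow verbatim, since all the structure constants entering the argument are integral.
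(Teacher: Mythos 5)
Your proposal is correct and follows essentially the same route as the paper's own proof: part (1) via \eqref{gss=g_sing} together with Proposition~\ref{prop: properties-g_sigma}, and parts (2)--(3) by direct inspection of the multiplication table of Section~\ref{third-basis}, hinging on precisely the two points the paper itself singles out~-- the factor $ \sigma_i^{ 2} $ in $ [ X_{2\vep_i}, X_{-2\vep_i} ] $, and the collapse $ H'_\theta = 0 $ in $ \fg''(\boldsymbol{0}) $ forced by the relation $ \sigma_1 H'_{2\vep_1} + \sigma_2 H'_{2\vep_2} + \sigma_3 H'_{2\vep_3} = 2 H'_\theta $. The only (harmless) slip is in a side remark: the $8$-dimensional odd part of $ \overline{\fd''_i} $ is \emph{two} copies of the bifundamental, i.e.\ $ ({\sqbullet} \oplus {\sqbullet}) \boxtimes \Box \boxtimes \Box $, not $ \Box \boxtimes \Box $ itself; this does not affect your argument, and the $ \mathfrak{psl}(2|2) $ identification you outline is exactly the step the paper dismisses as bookkeeping.
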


\begin{proof}Claim (1) is a direct consequence of \eqref{gss=g_sing} along with Proposition~\ref{prop: properties-g_sigma}.
Claims~(2) and~(3), like for the previous, parallel results, both follow as direct outcome of the formulas for Lie brackets among linear generators of $ \fgss $, which we read from Section~\ref{third-basis}.

 Indeed, for claim~(2) we notice that $ [ X_{2\vep_i}, X_{-2\vep_j} ] = \delta_{i,j} \sigma_i^{ 2} H'_{2\vep_i} $ is zero when $ \sigma_i = 0 $, so that $ \fc''_i $ is then an Abelian Lie subsuperalgebra. Moreover, $ \fc''_i $ is stable for the adjoint action by elements of $ \{ H_{2\vep_1}, H_{2\vep_2}, H_{2\vep_3}, H_\theta \} $ by construction (this holds true for every $ \bsigma \in V $ indeed). Finally, $ \fc''_i $ is also stable for the adjoint action by odd root vectors such as $ X_{\pm\beta_j} $ and $ X_{\pm\theta} $ because the formulas in fifth, sixth and seventh line of the table in Section~\ref{third-basis} all give zero Lie brackets when $ \sigma_i = 0 $. Overall, this means that $ \fc''_i $ is an Abelian Lie ideal of $ \fgss $.

 The claim about $ \overline{\fb''} := \fgss / \fc'' $ and the short exact sequence now are obvious consequences of $ \fc''_i $ being a Lie ideal.

 As to the rest of claim~(1), one sees again that everything follows from straightforward bookkeeping, nothing more.
For claim~(3) one has again to carry out a similar analysis. For instance, $ \fc'' := \bigoplus\limits_{i=1}^3 \fc''_i $ is an Abelian Lie ideal because of claim {\it (2)} and the fact that the $ \fc''_i $'s commute with each other.

 Something less obvious only occurs with the analysis of $ {\textstyle \bigoplus_{\alpha \in \Delta_\one}} \C \overline{X}_\alpha $. Indeed, the fact that this is an {\it Abelian} Lie ideal of $ \overline{\fb''} $ follows once more from the formulas in Section~\ref{third-basis} {\it but} one also has to pay attention to some detail.

 Indeed, a first bunch of Lie brackets among odd root vectors which are non-zero in $ \fg(\boldsymbol{0}) $ but are trivial instead in its subquotient $ {\textstyle \bigoplus_{\alpha \in \Delta_\one}} \C \overline{X}_\alpha $ are the following:
\begin{gather*}
 [ \overline{X_{\beta_i}}, \overline{X_{\beta_j}} ] = (1-\delta_{i,j}) \overline{X_{2\vep_k}} = -\overline{0}, \qquad
 [ \overline{X_{-\beta_i}}, \overline{X_{-\beta_j}} ] = -(1-\delta_{i,j}) \overline{X_{-2\vep_k}} = -\overline{0},\\
 [ \overline{X_{\beta_i}}, \overline{X_{-\theta}} ] = \overline{X_{-2\vep_i}} = -\overline{0}, \qquad
 [ \overline{X_{-\beta_i}}, \overline{X_{\theta}} ] = -\overline{X_{2\vep_i}} = -\overline{0}.
 \end{gather*}
 Second, the remaining non-obvious Lie brackets are described by the two formulas
\begin{gather*}
 [ X_{\beta_i}, X_{-\beta_j} ] = \delta_{i,j} ( \sigma_i H'_{2\vep_i} - H'_\theta ), \qquad [ X_{\theta}, X_{-\theta} ] = H'_\theta
 \end{gather*}
but the relation $ \sigma_1 H'_{2\vep_1} + \sigma_2 H'_{2\vep_2} + \sigma_3 H'_{2\vep_3} = 2 H'_\theta $ in $ \fgss $ reads now $ H'_\theta = 0 $ in $ \fg''(\boldsymbol{0}) $, hence from the last formulas above we get
 $[ \overline{X_{\beta_i}}, \overline{X_{-\beta_j}} ] = \overline{0} $ and $[ \overline{X_{\theta}}, \overline{X_{-\theta}} ] = \overline{0} $ in $ \overline{\fb''} $.

 All other parts of claim {\it (3)} follow equally from a similar analysis~-- a sheer matter of bookkeeping~-- so we leave them to the reader.
\end{proof}

\subsection[Degenerations from contractions: the $ \widehat{\fg}(\bsigma) $'s and the $ \widehat{\fg}^{ \prime}(\bsigma) $'s]{Degenerations from contractions: the $\boldsymbol{\widehat{\fg}(\bsigma)}$'s and the $ \boldsymbol{\widehat{\fg}^{ \prime}(\bsigma)}$'s} \label{contractions}

 We finish our study of remarkable integral forms of $ \fg_\bsigma $ by introducing some further ones, that all are obtained through a general construction; when specializing these forms, one obtains again degenerations, now of the kind that is often referred to as ``contraction'' (see, e.g.,~\cite{dr}).
We start with a very general construction. Let $ R $ be a (commutative, unital) ring, and let~$ \mathcal{A} $ be an ``algebra'' (not necessarily associative, nor unitary), in some category of $ R $-bimodules, for some ``product'' denoted by ``$ \cdot $'': we assume in addition that
\begin{gather} \label{props_A=F+C}
 \mathcal{A} = \mathcal{F} \oplus \mathcal{C} \qquad \text{with} \qquad \mathcal{F} \cdot \mathcal{F} \subseteq \mathcal{F}, \qquad \mathcal{F} \cdot \mathcal{C} \subseteq \mathcal{C}, \qquad \mathcal{C} \cdot \mathcal{F} \subseteq \mathcal{C}, \qquad \mathcal{C} \cdot \mathcal{C} \subseteq \mathcal{F}
\end{gather}
Choose now $ \tau $ be a non-unit in $ R $, and correspondingly consider in $ \mathcal{A} $ the $ R $-submodules
\begin{gather} \label{def-A_eta,+}
 \mathcal{F}_\tau := \mathcal{F}, \qquad
 \mathcal{C}_{ \tau} := \tau \mathcal{C}, \qquad
 \mathcal{A}_{ \tau} := \mathcal{F}_\tau + \mathcal{C}_{ \tau} = \mathcal{F} \oplus (\tau \mathcal{C}).
\end{gather}
Fix also a (strict) ideal $ I \trianglelefteq R $; then set $ R_I := R / I $ for the corresponding quotient ring, and use notation $ \mathcal{A}_{ \tau,I} := \mathcal{A}_{ \tau} / I \mathcal{A}_{ \tau} \cong ( R / I ) \otimes_R \mathcal{A}_{ \tau} $, $ \mathcal{F}_{\tau,I} := \mathcal{F}_\tau / I \mathcal{F}_\tau \cong ( R / I ) \otimes_R \mathcal{F} $ and $ \mathcal{C}_{ \tau,I} := \mathcal{C}_{ \tau} / I \mathcal{C}_{ \tau} = (\tau \mathcal{C}) / ( I \tau \mathcal{C} ) \cong ( R / I ) \otimes_R \mathcal{C}_{ \tau} \cong ( R / I ) \otimes_R (\tau \mathcal{C}) $. By construction we have $ \mathcal{A}_{ \tau,I} \cong \mathcal{F}_{\tau,I} \oplus \mathcal{C}_{ \tau,I} $ as an $ R_I $-module; moreover,
\begin{gather*} \mathcal{F}_{\tau,I} \cdot \mathcal{F}_{\tau,I} \subseteq \mathcal{F}_{\tau,I}, \qquad \mathcal{F}_{\tau,I} \cdot \mathcal{C}_{ \tau,I} \subseteq \mathcal{C}_{ \tau,I}, \qquad \mathcal{C}_{ \tau,I} \cdot \mathcal{F}_{\tau,I} \subseteq \mathcal{C}_{ \tau,I}, \qquad \mathcal{C}_{ \tau,I} \cdot \mathcal{C}_{ \tau,I} \subseteq \overline{\tau}^{ 2} \mathcal{F}_{\tau,I},
\end{gather*}
where the last identity comes from $ \mathcal{C}_{ \tau} \cdot \mathcal{C}_{ \tau} = \tau^2 ( \mathcal{C} \cdot \mathcal{C}) \subseteq \tau^2 \mathcal{F} = \tau^2 \mathcal{F}_\tau $ and we write $ \overline{\tau} := ( \tau \operatorname{mod} I ) \in R / I $. In particular, {\it if $ \tau \in I $, then $ \mathcal{C}_{ \tau,I} \cdot \mathcal{C}_{ \tau,I} = \{0\} $ and we get
\begin{gather} \label{A_eta-I semidir}
 \mathcal{A}_{ \tau,I} = \mathcal{F}_{\tau,I} \ltimes \mathcal{C}_{ \tau,I},
\end{gather}
where $ \mathcal{C}_{ \tau,I} $ bears the $ \mathcal{F}_{\tau,I} $-bimodule structure induced from $ \mathcal{A} $ and is given a trivial product, so that it sits inside $ \mathcal{A}_{ \tau,I} $ as a two-sided Abelian ideal, with \eqref{A_eta-I semidir} being a semidirect product splitting}.
 In fact, $ \mathcal{A}_{ \tau,I} $ is what is called a~``central extension of $ \mathcal{F}_{\tau,I} $ by $ \mathcal{C}_{ \tau,I} $''.

 In short, for $ \tau \in I $ this process leads us from the initial object $ \mathcal{A} $, that splits into $ \mathcal{A} = \mathcal{F} \oplus \mathcal{C} $ as $ R $-module, to the final object $ \mathcal{A}_{ \tau,I} = \mathcal{F}_{\tau,I} \ltimes \mathcal{C}_{ \tau,I} $, now split as a semidirect product.
 Following \cite[Section~2 and references therein]{dr}, we shall refer to this process as ``contraction'', and also refer to $ \mathcal{A}_{ \tau,I} $ as to a~``{\it contraction} of $ \mathcal{A} $''.
 Note, however, that these are contractions of a very special type, in that only the odd part is ``contracted'': in the general theory of contractions of Lie superalgebras, instead, one has to do with a richer variety of objects~-- cf., e.g.,~\cite{va}. For our purpose however we do not need the general theory in its full extent.
We apply now the above contraction procedure to a couple of integral forms of $ \fg_\bsigma $.
 First consider the case $ \mathcal{A} := \fgzx $, $ \mathcal{F} := {\fg_{{}_\Z}(\bx)}_\zero $ and $ \mathcal{C} := {\fg_{{}_\Z}(\bx)}_\one $; here the ground ring is $ R := \Zbx $, and we choose in it $ \tau := x_{1 } x_{2 } x_3 $ and the ideal $ I = I_\bsigma $ generated by $ x_1 - \sigma_1 $, $ x_2 - \sigma_2 $ and $ x_3 - \sigma_3 $. In this case, the ``blown-up'' Lie superalgebras in~\eqref{def-A_eta,+} reads $ {\fgzx}_\tau = {\fgzx}_\zero \oplus ( \tau {\fgzx}_\one ) $, that we write also with the simpler notation $ \widehat{\fg}_{{}_\Z}(\bx) := {\fgzx}_\tau $; similarly we write $ \widehat{\fg}_{{}_\Z}(\bsigma) := {\fgzx}_{\tau,I_\bsigma} $. Note that each $ \widehat{\fg}_{{}_\Z}(\bsigma) $ for {\it non-singular} $ \bsigma \in V^\times $ is yet another $ \Zbsigma $-integral form of our initial complex Lie superalgebra $ \fg_\bsigma $. Similarly occurs if we work over $ \C $, i.e., when we consider $ \mathcal{A} := \fgx $, $ \mathcal{F} := {\fgx}_\zero $, $ \mathcal{C} := {\fgx}_\one $ and the blown-up algebra $ {\fgx}_\tau $ with ground ring $ R := \Cbx $, and the contraction $ \widehat{\fg}(\bsigma) := {\fgx}_{\tau,I_\bsigma} $ over $ \C $. This gives (as in Section~\ref{subsect-g(sigma)}) a new coherent sheaf of complex Lie superalgebras over~$ V $, say $ \cL_{\widehat{\fg}_{{}_\Cbx}} $, with a $ \big( \C^\times \times \fS_3 \big) $-action on it, and an associated fibre bundle $ \mathbb{L}_{ \widehat{\fg}_{{}_\Cbx}} $ with the $ \widehat{\fg}(\bsigma) $'s as fibres; details are left to the reader. Next result describes the structure of the $ \widehat{\fg}(\bsigma) $'s.

\begin{thm} \label{thm_g_eta-spec} Let $ \bsigma \in V $, $ i \in \{1,2,3\} $, $ \widehat{\fa}_i :=\C X_{2\vep_i} \oplus \C H_{2\vep_i} \oplus \C X_{-2\vep_i} $ $( = \fa_i $ of Section~{\rm \ref{1st_Kac-constr})}.
\begin{enumerate}\itemsep=0pt
 \item[$(1)$] If $ \bsigma \in V^\times $, then the Lie superalgebra $ \widehat{\fg}(\bsigma) $ is simple.
\item[$(2)$] If $ \bsigma \in V \setminus V^\times $, with $ \sigma_i = 0 $ and $ \sigma_j \not= 0 \not= \sigma_k $ for $ \{i,j,k\} = \{1,2,3\} $, then
$ \widehat{\fa}_i \trianglelefteq \widehat{\fg} $ is a~central Lie ideal in $ \widehat{\fg}(\bsigma) $, with $ \widehat{\fa}_i \cong \C^{ 3|0} $, while $ \widehat{\fa}_j \cong \widehat{\fa}_k \cong \fsl(2) $ for $ \{j,k\} = \{1,2,3\} \setminus \{i\} $. Moreover, we have a semidirect product splitting
\begin{gather*} \widehat{\fg}(\bsigma) \cong {\widehat{\fg}(\bsigma)}_\zero \ltimes {\widehat{\fg}(\bsigma)}_\one \end{gather*}
with $ {\widehat{\fg}(\bsigma)}_\zero = \oplus_{\ell=1}^{ 3} \widehat{\fa}_\ell \cong \C^{ 3|0} \oplus \fsl(2) \oplus \fsl(2) $ while $ {\widehat{\fg}(\bsigma)}_\one $ is endowed with trivial Lie bracket and $ {\widehat{\fg}(\bsigma)}_\one \cong ( {\raise1,3pt\hbox{$ \sqbullet $}} {\raise1pt\hbox{$ \hskip2pt \oplus \hskip1pt $}} {\raise1,3pt\hbox{$ \sqbullet $}} ) \boxtimes \Box \boxtimes \Box $~-- where $ {\raise1,3pt\hbox{$ \sqbullet $}} $ is the trivial representation~-- as a module over $ {\widehat{\fg}(\bsigma)}_\zero \cong \C^{ 3|0} \oplus \fsl(2) \oplus \fsl(2) $; so there exists a {\it split} short exact sequence of Lie superalgebras
\begin{gather*} 0 \relbar\joinrel\longrightarrow {\widehat{\fg}(\bsigma)}_\one \cong ( {\raise1,3pt\hbox{$ \sqbullet $}} {\raise1pt\hbox{$ \hskip2pt \oplus \hskip1pt $}} {\raise1,3pt\hbox{$ \sqbullet $}} ) \boxtimes \Box \boxtimes \Box \relbar\joinrel\longrightarrow \widehat{\fg}(\bsigma) {\buildrel
{{\displaystyle \dashleftarrow \hskip-4pt \text{$-$}}} \over {\relbar\joinrel\longrightarrow}} {\widehat{\fg}(\bsigma)}_\zero \cong \C^{ 3|0} \oplus \fsl(2) \oplus \fsl(2) \relbar\joinrel\longrightarrow 0.
\end{gather*}
A parallel result also holds true when working with $ \widehat{\fg}_{{}_\Z}(\bsigma) $ over the ground ring $ \Zbsigma $.
\item[$(3)$] If $ \bsigma = \boldsymbol{0} ( \!\in V \setminus V^\times ) $, i.e., $ \sigma_h = 0 $ for all $ h \in \{1,2,3\} $, then $ \widehat{\fg}(\boldsymbol{0}) $ is the Abelian complex Lie superalgebra of superdimension $ 9|8 $, that is $ \widehat{\fg}(\boldsymbol{0}) \cong \C^{ 9|8} $ with trivial bracket.

 A parallel result holds true when working with $ \widehat{\fg}_{{}_\Z}(\boldsymbol{0}) $ over the ground ring $ \Z[\boldsymbol{0}] = \Z $.
 \end{enumerate}
\end{thm}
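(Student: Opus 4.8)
The plan is to exploit the one structural feature that makes the $\widehat{\fg}(\bsigma)$'s tractable: the contraction alters \emph{only} the odd-odd brackets. First I would record that $\overline{\tau} := (\tau \bmod I_\bsigma) = \sigma_1 \sigma_2 \sigma_3$, so that $\tau \in I_\bsigma$ exactly when some $\sigma_i$ vanishes, i.e.\ precisely on the singular locus $V \setminus V^\times$. Writing the odd generators of $\widehat{\fg}(\bsigma) = {\fgx}_{\tau,I_\bsigma}$ as $\widehat{X}_\gamma := \overline{\tau X_\gamma}$ and reading brackets off the first-basis table of Section~\ref{first-basis}, one checks that the even-even brackets and the even-odd action of $\widehat{\fg}(\bsigma)$ are identical to those of $\fgs$, whereas every odd-odd bracket picks up the scalar factor $\overline{\tau}^{\,2}$. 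Thus $\widehat{\fg}(\bsigma)$ is obtained from $\fgs$ simply by rescaling the odd-odd bracket by $\overline{\tau}^{\,2}$, and everything concerning its even part and its even-odd action can be imported verbatim from Theorem~\ref{thm_g-spec}.

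Part~(1) is then immediate: for $\bsigma \in V^\times$ one has $\overline{\tau} \in \C^\times$, and the $\C$-linear map fixing ${\widehat{\fg}(\bsigma)}_\zero$ and sending $\widehat{X}_\gamma \mapsto \overline{\tau}\,X_\gamma$ is, by the three bracket formulas above, an isomorphism $\widehat{\fg}(\bsigma) \cong \fgs$; simplicity follows from Theorem~\ref{thm_g-spec}(1). For parts~(2) and~(3) the crucial point is that $\overline{\tau} = 0$, so the odd-odd bracket vanishes identically and the general contraction result~\eqref{A_eta-I semidir} at once produces the semidirect splitting $\widehat{\fg}(\bsigma) = {\widehat{\fg}(\bsigma)}_\zero \ltimes {\widehat{\fg}(\bsigma)}_\one$ with ${\widehat{\fg}(\bsigma)}_\one$ an Abelian ideal carrying trivial bracket.

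It then remains to identify both halves. For part~(2), with $\sigma_i = 0 \neq \sigma_j, \sigma_k$, the first two lines of the table give $\widehat{\fa}_\ell \cong \fsl(2)$ for $\ell \in \{j,k\}$ (after rescaling to absorb $\sigma_\ell \neq 0$) and $\widehat{\fa}_i \cong \C^{3|0}$ Abelian, the three summands commuting pairwise; moreover each even-odd bracket involving a generator of $\widehat{\fa}_i$ carries a factor $\sigma_i = 0$, so $\widehat{\fa}_i$ is central in all of $\widehat{\fg}(\bsigma)$. For part~(3), Theorem~\ref{thm_g-spec}(3) already shows ${\fg(\boldsymbol{0})}_\zero \cong \C^{9|0}$ is Abelian and central in $\fg(\boldsymbol{0})$; since the contraction additionally kills the odd-odd bracket, \emph{all} brackets of $\widehat{\fg}(\boldsymbol{0})$ vanish, giving $\widehat{\fg}(\boldsymbol{0}) \cong \C^{9|8}$ with trivial bracket. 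As all the computations involve only $\Zbsigma$-integral structure constants, the parallel statements over $\Zbsigma$ (resp.\ $\Z$) follow at no extra cost.

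The one step I expect to need genuine care, rather than mere bookkeeping, is pinning down ${\widehat{\fg}(\bsigma)}_\one$ as a module over ${\widehat{\fg}(\bsigma)}_\zero$ in part~(2). Here I would return to Kaplansky's realization (Section~\ref{kapl-real}), where the odd space is $\Box_1 \boxtimes \Box_2 \boxtimes \Box_3$ and each $\fa_\ell$ acts standardly on the $\ell$-th slot. The subtlety is that the generators $H_{2\vep_\ell}, X_{\pm 2\vep_\ell}$ carry $\sigma_\ell$-factors, so I must verify that after the rescaling identifying $\widehat{\fa}_j, \widehat{\fa}_k$ with $\fsl(2)$ their action on $\Box_j, \Box_k$ is the standard weight $\pm 1$ action, while the vanishing of $\sigma_i$ trivializes the $i$-th slot, turning $\Box_i$ into $\sqbullet \oplus \sqbullet$. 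Granting this, the external tensor decomposition yields ${\widehat{\fg}(\bsigma)}_\one \cong (\sqbullet \oplus \sqbullet) \boxtimes \Box \boxtimes \Box$ and hence the stated split short exact sequence.
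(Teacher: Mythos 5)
Your proposal is correct and follows essentially the same route as the paper: the paper's own (very terse) proof likewise combines Theorem~\ref{thm_g-spec} with the contraction fact~\eqref{A_eta-I semidir} (that $\overline{\tau}=\sigma_1\sigma_2\sigma_3$ kills the odd--odd brackets exactly on the singular locus), or equivalently a direct reading of the Section~\ref{first-basis} formulas with the odd generators rescaled by~$\tau$. You merely make explicit what the paper leaves implicit — notably the rescaling isomorphism $\widehat{\fg}(\bsigma)\cong\fgs$ for $\bsigma\in V^\times$ and the identification of the odd part as $(\,{\scriptstyle\blacksquare}\oplus{\scriptstyle\blacksquare}\,)\boxtimes\Box\boxtimes\Box$ via Kaplansky's realization — and these details check out.
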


\begin{proof} The claim follows directly from Theorem \ref{thm_g-spec} once we take also into account the fact that the $ \widehat{\fg}(\bsigma) $'s are specializations of $ {\fgx}_\tau $, and for singular values $ \bsigma \in V^\times $ any such speciali\-zation is a {\it contraction} of $ \widehat{\fg}(\bsigma) $, of the form $ \widehat{\fg}(\bsigma) = {\fg(\bx)}_{\tau,I} $ for the element $ \tau := x_{1 } x_{2 } x_{ 3} $ and the ideal $ I := \{ {( x_i - \sigma_i \big)}_{i=1,2,3} \} $. Otherwise, one can deduce the statement directly from the explicit formulas for (linear) generators of $ \widehat{\fg}(\bx) $: indeed, the latter are easily obtained as slight modification~-- taking into account that odd generators must be ``rescaled'' by the coefficient $ \tau := x_{1 } x_{2 } x_{ 3} $~-- of the similar formulas in Section~\ref{first-basis} for $ \fgs $, which read as formulas for $ \fgx $ just switching the $ \sigma_\ell $'s into $ x_\ell $'s.
\end{proof}

As a second instance, we consider the case $ \mathcal{A} := \fgpzx $, $ \mathcal{F} := {\fgpzx}_\zero $ and $ \mathcal{C} := {\fgpzx}_\one $; the ground ring is again $ R := \Zbx $, and again we choose in it $ \tau := x_{1 } x_{2 } x_{3 } $ and the ideal $ I $ generated by $ x_1 - \sigma_1 $, $ x_2 - \sigma_2 $ and $ x_3 - \sigma_3 $. In this second case, we have again a ``blown-up'' Lie superalgebra as in~\eqref{def-A_eta,+}, that now reads $ {\fgpzx}_\tau = {\fgpzx}_\zero \oplus ( \tau {\fgpzx}_\one ) $, for which we use the simpler notation $ \widehat{\fg}^{ \prime}_{{}_\Z}(\bx) := {\fgpzx}_\tau $; similarly we write also~$ \widehat{\fg}^{ \prime}_{{}_\Z}(\bsigma) := {\fgpzx}_{\tau,I_\bsigma} $. Again, each $ \widehat{\fg}^{ \prime}_{{}_\Z}(\bsigma) $ for {\it non-singular} $ \bsigma \in V^\times $ is another $ \Zbsigma $-integral form of the complex Lie superalgebra $ \fg_\bsigma $ we started with. Similarly, working over $ \C $ we consider $ \mathcal{A} := \fgpx $, $ \mathcal{F} := {\fgpx}_\zero $, $ \mathcal{C} := {\fgpx}_\one $ and the blown-up algebra $ {\fgpx}_\tau $ with ground ring $ R := \Cbx $, and the contraction $ \widehat{\fg}^{ \prime}(\bsigma) := {\fgpx}_{\tau,I_\bsigma} $ over~$ \C $.
This provides one more coherent sheaf of complex Lie superalgebras over $ V $, denoted~$ \cL_{\widehat{\fg}^{ \prime}_{{}_\Cbx}} $, with a~$ ( \C^\times \times \fS_3 ) $-action on it, and an associated fibre bundle $ \mathbb{L}_{ \widehat{\fg}^{ \prime}_{{}_\Cbx}} $ having the $ \widehat{\fg}^{ \prime}(\bsigma) $'s as fibres (just like in Section~\ref{subsect-g(sigma)}: details are left to the reader). Next result describes the structure of these fibres $ \widehat{\fg}^{ \prime}(\bsigma) $'s:

\begin{thm} \label{thm_g'_eta-spec}
 Let $ \bsigma \in V $, and $ \widehat{\fa}^{ \prime}_i := \C X'_{2\vep_i} \oplus \C H'_{2\vep_i} \oplus \C X'_{-2\vep_i} $ for all $ i = 1, 2, 3 $.
\begin{enumerate}\itemsep=0pt
\item[$(1)$] If $ \bsigma \in V^\times $, then the Lie superalgebra $ \widehat{\fg}^{ \prime}(\bsigma) $ is simple.
\item[$(2)$] If $ \bsigma \in V \setminus V^\times $, then
 $ {\widehat{\fg}^{ \prime}(\bsigma)}_\zero \cong {\fsl(2)}^{\oplus 3} $ as Lie superalgebras, %
 $ {\widehat{\fg}^{ \prime}(\bsigma)}_\one \cong \boxtimes_{ i=1}^{ 3} \Box_i $ as modules over $ {\widehat{\fg}^{ \prime}(\bsigma)}_\zero \cong {\fsl(2)}^{\oplus 3} $ and the Lie bracket is trivial on $ {\widehat{\fg}^{ \prime}(\bsigma)}_\one $; finally, we have semidirect product splittings
\begin{gather*} \widehat{\fg}^{ \prime}(\bsigma) \cong {\widehat{\fg}^{ \prime}(\bsigma)}_\zero \ltimes {\widehat{\fg}^{ \prime}(\bsigma)}_\one \cong {\fsl(2)}^{\oplus 3} \ltimes \big( {\boxtimes}_{ i=1}^{ 3} \Box_i \big).
\end{gather*}
 In other words, there exists a {\it split} short exact sequence
\begin{gather*} 0 \relbar\joinrel\relbar\joinrel\relbar\joinrel\relbar\joinrel\longrightarrow \Box^{ \boxtimes 3} \cong {\widehat{\fg}^{ \prime}(\bsigma)}_\one \relbar\joinrel\relbar\joinrel\relbar\joinrel\relbar\joinrel\longrightarrow \widehat{\fg}^{ \prime}(\bsigma) {\buildrel
{{\displaystyle \dashleftarrow \hskip-4pt \text{$-$} \text{$-$} \text{$-$}}} \over {\relbar\joinrel\relbar\joinrel\relbar\joinrel\relbar\joinrel\longrightarrow}} {\widehat{\fg}^{ \prime}(\bsigma)}_\zero \cong {\fsl(2)}^{\oplus 3} \relbar\joinrel\relbar\joinrel\relbar\joinrel\relbar\joinrel\longrightarrow 0.
\end{gather*}
A parallel result also holds true when working with $ \widehat{\fg}^{ \prime}_{{}_\Z}(\bsigma) $ over the ground ring $ \Zbsigma $.
\end{enumerate}
\end{thm}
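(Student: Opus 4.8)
The plan is to read off the entire statement from the general contraction formalism of Section~\ref{contractions}, combined with the multiplication table of the second basis in Section~\ref{second-basis}, in complete parallel with the proof of Theorem~\ref{thm_g_eta-spec}. The first thing I would isolate is the decisive structural feature of the second basis: using the genuine even generators $ H'_{2\vep_i}, X'_{\pm 2\vep_i} $ and eliminating $ H'_\theta $ via the relation $ \sigma_1 H'_{2\vep_1} + \sigma_2 H'_{2\vep_2} + \sigma_3 H'_{2\vep_3} = 2 H'_\theta $, the even--even and even--odd brackets tabulated in Section~\ref{second-basis} are all \emph{independent of} $ \bsigma $, so that the whole $ \bsigma $-dependence of $ \fgpx $ sits in the odd--odd brackets alone. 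This is precisely the feature that the contraction is designed to exploit.

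For part~(1), when $ \bsigma \in V^\times $ the element $ \tau := x_1 x_2 x_3 $ specializes to $ \overline{\tau} = \sigma_1 \sigma_2 \sigma_3 \neq 0 $; the rescaling $ X'_\alpha \mapsto \tau X'_\alpha $ defining the contraction is then multiplication by a nonzero scalar, and the rescaling of the odd generators by the nonzero scalar $ \overline{\tau} $ exhibits a Lie superalgebra isomorphism $ \widehat{\fg}^{\prime}(\bsigma) \cong \fgps $. Simplicity then follows at once from Theorem~\ref{thm_g'-spec}(1).

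For part~(2), $ \bsigma \in V \setminus V^\times $ forces some $ \sigma_i = 0 $, hence $ \overline{\tau} = 0 $, i.e., $ \tau \in I_\bsigma $; the general result \eqref{A_eta-I semidir} then delivers at once the semidirect product $ \widehat{\fg}^{\prime}(\bsigma) = \mathcal{F}_{\tau,I} \ltimes \mathcal{C}_{\tau,I} $ with $ \mathcal{C}_{\tau,I} $ an Abelian two-sided ideal carrying trivial bracket. What remains is to identify the two factors. Since the even part is never rescaled, $ \mathcal{F}_{\tau,I} \cong {\fgps}_\zero $, and the $ \bsigma $-independent even brackets show that each $ \widehat{\fa}^{ \prime}_i $ is a standard $ \fsl(2) $-triple, with distinct triples commuting, giving $ {\widehat{\fg}^{\prime}(\bsigma)}_\zero \cong {\fsl(2)}^{\oplus 3} $. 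Likewise $ \mathcal{C}_{\tau,I} \cong {\fgps}_\one $ as a module over $ \mathcal{F}_{\tau,I} $, and the $ \bsigma $-independent even--odd brackets realize it as Kaplansky's $ \Box_1 \boxtimes \Box_2 \boxtimes \Box_3 = \Box^{\boxtimes 3} $ (cf.\ Section~\ref{kapl-real}). Assembling these identifications yields the claimed isomorphism and the split short exact sequence.

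The only step requiring genuine care is the $ \bsigma $-independence assertion of the first paragraph, which I would verify by a direct scan of the table in Section~\ref{second-basis}; once that is in hand there is no real obstacle, as the contraction machinery hands over the semidirect structure and the regular-locus simplicity comes free from Theorem~\ref{thm_g'-spec}. The one conceptual point worth flagging---by contrast with Theorem~\ref{thm_g_eta-spec}---is that here $ [ X'_{2\vep_i}, X'_{-2\vep_j} ] = \delta_{i,j} H'_{2\vep_i} $ is itself $ \bsigma $-free, so \emph{all three} even triples stay isomorphic to $ \fsl(2) $; consequently no distinguished vanishing index survives, and the description in part~(2) is uniform across the entire singular locus $ V \setminus V^\times $ (in particular there is no separate $ \bsigma = \boldsymbol{0} $ case to treat).
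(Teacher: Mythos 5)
Your proof is correct and takes essentially the same route as the paper's: the paper likewise deduces the claim from Theorem~\ref{thm_g'-spec} combined with the contraction structure $ \widehat{\fg}^{\prime}(\bsigma) = {\fgpx}_{\tau,I_\bsigma} $ (with $ \tau := x_1 x_2 x_3 $), and explicitly offers the direct analysis of the $ \tau $-rescaled formulas of Section~\ref{second-basis} as the alternative method. Your write-up merely supplies the details the paper leaves to the reader~-- the $ \bsigma $-independence of the even--even and even--odd brackets, the identification of the factors $ \fsl(2)^{\oplus 3} $ and $ \Box^{\boxtimes 3} $, and the observation explaining why no separate $ \bsigma = \boldsymbol{0} $ case is needed.
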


\begin{proof} One can easily deduce the claim from Theorem \ref{thm_g'-spec} along with the fact that each $ \widehat{\fg}^{ \prime}(\bsigma) $ is a specialization of $ {\fgpx}_\tau $, and in particular for singular values $ \bsigma \in V^\times $ any such specialization is indeed a {\it contraction} of $ \widehat{\fg}^{ \prime}(\bsigma) $, namely of the form $ \widehat{\fg}^{ \prime}(\bsigma) = {\fgpx}_{\tau,I} $ for the element $ \tau := x_{1 } x_{2 } x_{ 3} $ and the ideal $ I := \{ ( x_i - \sigma_i )_{i=1,2,3} \} $. As alternative method, one can obtain the statement by means of a direct analysis of the explicit formulas for (linear) generators of $ \widehat{\fg}^{ \prime}(\bx) $: in fact, one easily obtains such formulas as slight modifications~-- taking into account the ``rescaling'' of odd generators by the coefficient $ \tau := x_{1 } x_{2 } x_{3 } $~-- of the formulas in Section~\ref{second-basis}. We leave details to the interested reader.
\end{proof}

\begin{Remark} \label{rmk: compar-fam.s - alg} We considered five families of Lie superalgebras, denoted by $ \{ \fgs \}_{\sigma \in V} $,\linebreak $\{ \fgps\}_{\bsigma \in V} $, $\{ \fgss \}_{\bsigma \in V} $, $ \{ \widehat{\fg}(\bsigma) \}_{\bsigma \in V} $ and $ \{ \widehat{\fg}^{ \prime}(\bsigma) \}_{\bsigma \in V} $, all being indexed by the points of the complex plane~$ V $. Now, our analysis shows that these five families share most of their elements, namely all those indexed by ``general points'' $ \bsigma \in V^\times :=
 V \cap ( \C^\times )^3 $. On the other hand, the five families are drastically different at all points in the ``singular locus'' $ \cS := V \setminus V^\times = V \cap \big( \bigcup_{i=1,2,3} \{ \sigma_i = 0 \} \big) $. In other words, the five sheaves $ \mathcal{L}_{ \fg_{{}_\Cbx}} $, $ \mathcal{L}_{ \fg'_{{}_\Cbx}} $, $ \mathcal{L}_{ \fg''_{{}_\Cbx}} $, $ \mathcal{L}_{ \widehat{\fg}_{{}_\Cbx}} $ and $ \mathcal{L}_{ \widehat{\fg}^{ \prime}_{{}_\Cbx}} $ of Lie superalgebras over $ \operatorname{Spec} (\Cbx) \cong V \cup \{\star\} \big( {\cong} \mathbb{A}_\C^{ 2} \cup \{\star\} \big) $ share the same stalks on all ``general'' points (i.e., those outside $ \cS $), and have different stalks instead on ``singular'' points (i.e., those in~$ \cS $). Likewise, the five fibre bundles
 $ \mathbb{L}_{ \fg_{{}_\Cbx}} $, $ \mathbb{L}_{ \fg'_{{}_\Cbx}} $, $ \mathbb{L}_{ \fg''_{{}_\Cbx}} $, $ \mathbb{L}_{ \widehat{\fg}_{{}_\Cbx}} $ and $ \mathbb{L}_{ \widehat{\fg}^{ \prime}_{{}_\Cbx}} $ over $ \operatorname{Spec} (\Cbx) $ share the same fibres on all general points and have different fibres on singular points.
 Let us also stress that the second family $ \{ \fgps\}_{\bsigma \in V} $ is just Kaplansky's one, as $ \fgps \cong \fg_\bsigma $~-- cf.\ Section~\ref{sect_g'-sigma}.
 The outcome of the previous discussion is, loosely speaking, that {\it our construction provides five different ``completions'' of the family $ \{ \fg_\bsigma \}_{\sigma \in V \setminus \cS} $ of {\it simple} Lie superalgebras, by adding~-- in five different ways~-- some new {\it non-simple} extra elements} on top of each point of the ``singular locus'' $ \cS $. In particular, this shows that {\it it makes no sense to speak of ``taking the limit for $ \bsigma $ going to $ \cS $'' of the simple Lie superalgebras $ \fg_\bsigma $ $($for $ \bsigma \in V )$}, unless one states exactly what is the {\it total} family~-- namely, indexed over {\it all of $ V $}~-- of Lie superalgebras one has chosen to complete the family $ \{ \fg_\bsigma \}_{\sigma \in V \setminus \cS} $. Indeed, as our results show, depending on such a choice one finds very different, non-isomorphic ``limits''.
\end{Remark}

\section[Lie supergroups of type $ D(2,1;\bsigma) $: presentations and degenerations]{Lie supergroups of type $\boldsymbol{D(2,1;\bsigma)}$:\\ presentations and degenerations}\label{section5}

In this section, we introduce (complex) Lie supergroups of type $ D\big(2,1;\bsigma) $, basing on the five families of Lie superalgebras introduced in Section~\ref{forms-degen.s - Lie s-alg.s} and following the approach of Section~\ref{sHCp's->Liesgrp's}. For simplicity, we formulate everything over $ \C $, but the reader may see some subtleties to discuss about the Chevalley groups over a $ \Z[\bsigma] $-algebra. The latter had been discussed in \cite{FG} and \cite{Ga1} for some basis, i.e., for one particular choice of $ \Z[\bsigma] $-integral form (though with slightly different formalism); in the present case everything works similarly, up to paying attention to the $ \bsigma $-dependence of the commutation relations of the $ \Z[\bsigma] $-form one chooses (cf.~Section~\ref{forms-degen.s - Lie s-alg.s}). The details are left to the reader.

\subsection[First family: the Lie supergroups $ \bG_\bsigma $]{First family: the Lie supergroups $\boldsymbol{\bG_\bsigma}$} \label{subsect: LieSGroups G_s}

 Given $ \bsigma = (\sigma_1,\sigma_2,\sigma_3) \in V $, let $ \fg = \fgs $ be the complex Lie superalgebra associated with $ \bsigma $ as in Section~\ref{sect-g-sigma}, and $ \fg_\zero $ its even part. We recall that $ \fg $ is spanned over $ \C $ by $ \{ H_{2\vep_1}, H_{2\vep_2}, H_{2\vep_3}, H_\theta \} \cup {\{ X_\alpha \}}_\Delta $. Like in Section~\ref{sect-g-sigma}, we set $ \fa_i := \C X_{2\varepsilon_i} \oplus \C H_{2\varepsilon_i} \oplus \C X_{-2\varepsilon_i} $ for each $ i $~-- all these being Lie subalgebras of $ \fgs $, with $ {\fgs}_\zero = \oplus_{i=1}^3 \fa_i $. When $ \sigma_i \not= 0 $, the Lie algebra $ \fa_i $ is isomorphic to $ \fsl(2) $: an explicit isomorphism is realized by mapping $ X_{2\varepsilon_i} \mapsto \sigma_i e $, $ H_{2\varepsilon_i} \mapsto \sigma_i h $ and $ X_{-2\varepsilon_i} \mapsto \sigma_i f $, where $ \{ e, h, f \} $ is the standard basis $ \fsl(2) $. When $ \sigma_i = 0 $ instead, $ \fa_i \cong \C^{\oplus 3} $ becomes the 3-dimensional Abelian Lie algebra.
 \par
 Let us now set $ A_i := \text{SL}_2 $ if $ \bsigma_i \not= 0 $ and $ A_i := \C \times \C^\times \times \C $ if $ \bsigma_i = 0 $, and define $ G := \times_{i=1}^3 A_i $~-- a~complex Lie group such that $ \operatorname{Lie}(G) = {\fgs}_\zero $. One sees that the adjoint action of $ {\fgs}_\zero $ onto~$ \fgs $ integrates to a Lie group action of~$ G $ onto~$ \fgs $ again, so that the pair $ \cP_{\bsigma} := \big( G, \fgs \big) $~-- endowed with that action~-- is a super Harish-Chandra pair (cf.\ Section~\ref{sHCp's}); note that its dependence on $ \bsigma $ lies within all its constituents: the structure of $ G $, the Lie superalgebra $ \fgs $, and the action of the former onto the latter.

 Finally, we let
\begin{gather*} \bG_\bsigma := \bG_{{}_{\cP_{{}_{\bsigma}}}} \end{gather*}
 be the complex Lie supergroup associated with the super Harish-Chandra pair $ \cP_{\bsigma} $ trough the category equivalence given in Section~\ref{sHCp's->Liesgrp's}.

\begin{free text} \label{pres-G_s} {\bf A presentation of $\boldsymbol{\bG_\bsigma}$.} We shall now provide an explicit presentation by generators and relations for the supergroups~$ \bG_\bsigma $, i.e., for the abstract groups $ \bG_\bsigma(A) $, $ A \in \Wsalg $.

 To begin with, inside each subgroup $ A_i $ we consider the elements
\begin{gather*} x_{2\varepsilon_i}(c) := \exp (c X_{2\varepsilon_i} ), \qquad
 h_{2\varepsilon_i}(c) := \exp (c H_{2\varepsilon_i} ), \qquad
 x_{-2\varepsilon_i}(c) := \exp (c X_{-2\varepsilon_i} ) \end{gather*}
for every $ c \in \C $; then $ \varGamma_i := \{ x_{2\varepsilon_i}(c), h_{2\varepsilon_i}(c), x_{-2\varepsilon_i}(c) \}_{ c \in \C} $ is a generating set for $ A_i $.

 We define also elements $ h_\theta(c) := \exp\big(c H_\theta\big) $ for all $ c \in \C $: then the commutation relations $ [ H_{2\varepsilon_r}, H_{2\varepsilon_s} ] = 0 $ and $ H_{2\varepsilon_1} + H_{2\varepsilon_2} + H_{2\varepsilon_3} = 2 H_\theta $ inside $ \fgs $ together imply the group relations $ h_{2\varepsilon_1}(c) h_{2\varepsilon_2}(c) h_{2\varepsilon_3}(c) = {h_\theta(c)}^2 $ for all $ c \in \C $.

 The complex Lie group $ G $ is clearly generated by
\begin{gather*} \varGamma_\zero := \{ x_{2\varepsilon_i}(c), h_{2\varepsilon_i}(c), h_\theta(c), x_{-2\varepsilon_i}(c) \}^{i \in \{1,2,3\}}_{c \in \C} \end{gather*}
(the $ h_\theta(c) $'s might be dropped, but we prefer to add them too as generators).

 In addition, when we consider $ G $ as a (totally even) supergroup and we look at it as a~group-valued functor $ G \colon \Wsalg \longrightarrow \text{\rm ($ \cat{grps} $)} $, {\it the abstract group $ G(A) $ of its $ A $-points~-- for $ A \in \Wsalg $~-- is generated by the set}
\begin{gather} \label{eq: gen-set G_+}
 \varGamma_\zero(A) := \{ x_{2\varepsilon_i}(a), h_{2\varepsilon_i}(a), h_\theta(a), x_{-2\varepsilon_i}(a) \}^{i \in \{1,2,3\}}_{a \in A_\zero}.
\end{gather}

 Note that here the generators do make sense~-- as operators in $ \text{GL} ( A \otimes \fgs ) $, but also formally~-- since $ A = \C \oplus \mathfrak{N}(A) $ (cf.\ Section~\ref{subsec:basic-sobjcs}), so each $ a \in A $ reads as $ a = c + n_a $ for some $ c \in \C $ and a nilpotent $ n_a \in \mathfrak{N}(A) $, hence $ \exp(a X_{2\varepsilon_i}) = \exp(c X_{2\varepsilon_i}) \exp(n_a X_{2\varepsilon_i}) $, etc., are all well-defined.

 Following the recipe in Section~\ref{sHCp's->Liesgrp's}, in order to generate the group $ \bG_\bsigma(A) := \bG_{{}_{\cP_{{}_{\bsigma}}}}(A) $, beside the subgroup $ G(A) $ we need also all the elements of the form $ ( 1 + \eta_i Y_i ) $ with $ (i,\eta_i) \in I \times A_\one $~-- cf.\ Section~\ref{sHCp's->Liesgrp's}~-- where now the $ \C $-basis $\{ Y_i \}_{i \in I} $ of $ \fg_\one $ is $ \{ Y_i \}_{i \in I} = \{ X_{\pm\theta}, X_{\pm\beta_i} \}_{i=1,2,3} $. Therefore, we introduce notation $ x_{\pm\theta}(\eta) := ( 1 + \eta X_{\pm\theta} \big) $, $ x_{\pm\beta_i}(\eta) := ( 1 + \eta X_{\pm\beta_i} ) $ for all $ \eta \in A_\one $, $ i \in \{1,2,3\} $, and we consider the set $ \varGamma_\one(A) := \{ x_{\pm\theta}(\eta), x_{\pm\beta_i}(\eta) \,|\, \eta \in A_\one \} $.

 Now, taking into account that $ G(A) $ is generated by $ \varGamma_\zero(A) $, we can modify the set of relations given in Section~\ref{sHCp's->Liesgrp's} by letting $ g \in G(A) $ range inside the set $ \varGamma_\zero(A) $: then we can find the following {\it full set of relations} (where hereafter {\it we freely use notation $ e^Z := \exp(Z) $}):
\begin{gather*}
 1_{{}_G} = 1, \qquad g' \cdot g'' = g' \cdot_{{}_G} g'', \qquad \forall\, g', g'' \in G(A),\\
h_{2\vep_i}(a) x_{\pm\beta_j}(\eta) {h_{2\vep_i}(a)}^{-1} = x_{\pm\beta_j}\big( e^{\pm {(-1)}^{-\delta_{i,j}} \sigma_i a} \eta\big),\\
 h_{2\vep_i}(a) x_{\pm\theta}(\eta) {h_{2\vep_i}(a)}^{-1} = x_{\pm\theta}\big( e^{\pm \sigma_i a} \eta\big),\\
 h_\theta(a) x_{\pm\beta_i}(\eta) {h_\theta(a)}^{-1} = x_{\pm\beta_i}\big( e^{\mp \sigma_i a} \eta\big), \qquad h_\theta(a) x_{\pm\theta}(\eta) {h_\theta(a)}^{-1} = x_{\pm\theta}(\eta),\\
 x_{2\vep_i}(a) x_{\beta_j}(\eta) {x_{2\vep_i}(a)}^{-1} = x_{\beta_j}(\eta) x_{\theta}( \delta_{i,j} \sigma_i a \eta ),\\
 x_{2\vep_i}(a) x_{-\beta_j}(\eta) {x_{2\vep_i}(a)}^{-1} = x_{-\beta_j}(\eta) x_{\beta_k}( (1-\delta_{i,j}) \sigma_i a \eta ),\\
 x_{-2\vep_i}(a) x_{\beta_j}(\eta) {x_{-2\vep_i}(a)}^{-1} = x_{\beta_j}(\eta) x_{-\beta_k}( (1-\delta_{i,j}) \sigma_i a \eta ),\\
 x_{-2\vep_i}(a) x_{-\beta_j}(\eta) {x_{-2\vep_i}(a)}^{-1} = x_{-\beta_j}(\eta) x_{-\theta}( \delta_{i,j} \sigma_i a \eta ),\\
 x_{2\vep_i}(a) x_{\theta}(\eta) {x_{2\vep_i}(a)}^{-1} = x_{\theta}(\eta),\\
 x_{2\vep_i}(a) x_{-\theta}(\eta) {x_{2\vep_i}(a)}^{-1} = x_{-\theta}(\eta) x_{-\beta_i}( \sigma_i a \eta ),\\
 x_{-2\vep_i}(a) x_{\theta}(\eta) {x_{-2\vep_i}(a)}^{-1} = x_{\theta}(\eta) x_{\beta_i}( \sigma_i a \eta ),\\
 x_{-2\vep_i}(a) x_{-\theta}(\eta) {x_{-2\vep_i}(a)}^{-1} = x_{-\theta}(\eta) \cr
 x_{\beta_i}(\eta_i) x_{\beta_j}(\eta'_j) = x_{2\vep_k}( (1-\delta_{i,j}) \eta'_j \eta_i) x_{\beta_j}(\eta'_j) x_{\beta_i}(\eta_i),\\
 x_{-\beta_i}(\eta_i) x_{-\beta_j}(\eta'_j) = x_{-2\vep_k}( -(1-\delta_{i,j}) \eta'_j \eta_i) x_{-\beta_j}(\eta'_j) x_{-\beta_i}(\eta_i),\\
 x_{\beta_i}(\eta_i) x_{-\beta_j}(\eta'_j) = h_{2\vep_i}(\delta_{i,j} \eta'_j \eta_i) h_{\theta}(-\delta_{i,j} \eta'_j \eta_i) x_{-\beta_j}(\eta'_j) x_{\beta_i}(\eta_i),\\
 x_{\beta_i}(\eta_i) x_{\theta}(\eta) = x_{\theta}(\eta) x_{\beta_i}(\eta_i), \qquad x_{\beta_i}(\eta_i) x_{-\theta}(\eta) = x_{-2\vep_i}(\eta \eta_i) x_{-\theta}(\eta) x_{\beta_i}(\eta_i),\\
 x_{-\beta_i}(\eta_i) x_{\theta}(\eta) = x_{2\vep_i}(-\eta \eta_i) x_{\theta}(\eta) x_{-\beta_i}(\eta_i), \qquad x_{-\beta_i}(\eta_i) x_{-\theta}(\eta) = x_{-\theta}(\eta) x_{-\beta_i}(\eta_i),\\
 x_{\theta}(\eta_+) x_{-\theta}(\eta_-) = h_\theta(\eta_- \eta_+) x_{-\theta}(\eta_-) x_{\theta}(\eta_+),\\
 x_{\pm\beta_i}(\eta') x_{\pm\beta_i}(\eta'') = x_{\pm\beta_i}(\eta'+\eta''), \qquad x_{\pm\theta}(\eta') x_{\pm\theta}(\eta'') = x_{\pm\theta}(\eta'+\eta'')
 \end{gather*}
 with $ \{i,j,k\} \in \{1,2,3\} $.
\end{free text}

\begin{free text} \label{sing-special-G_s}
 {\bf Singular specializations of the supergroup(s) $\boldsymbol{\bG_\bsigma}$.} From the very construction of the supergroups $ \bG_\bsigma $, we get that
\begin{gather*}
\text{\it $ \bG_\bsigma $ is simple $($as a Lie supergroup$)$ for all $ \bsigma = (\sigma_1,\sigma_2,\sigma_3) \in V^\times $},
\end{gather*}
 where we recall that a Lie supergroup is said to be {\it simple} if it has no non-trivial normal closed connected Lie sub-supergroup. This follows from the presentation of $ \bG_\bsigma $ in Section~\ref{pres-G_s} above, or it can be seen as a direct consequence of the relation $ \operatorname{Lie}(\bG_\bsigma) = \fg(\bsigma) = \fg_\bsigma $ and of Proposition~\ref{prop: properties-g_sigma}.

 On the other hand, the situation is different at ``singular values'' of the parameter $ \bsigma $: the following records the whole situation.
\end{free text}

\begin{thm} \label{thm_G-spec} Let $ \bsigma \in V $ as usual, and keep notation as above.
\begin{enumerate}\itemsep=0pt
\item[$(1)$] If $ \bsigma \in V^\times $, then the Lie supergroup $ \bG_\bsigma $ is simple.
\item[$(2)$] If $ \bsigma \in V \setminus V^\times $, with $ \sigma_i = 0 $ and $ \sigma_j \not= 0 \not= \sigma_k $ for $ \{i,j,k\} = \{1,2,3\} $, then $ A_i $ is a {\it central} subgroup of $ \bG_\bsigma $, isomorphic to $ \C \times \C^\times \times \C $, and $ \bG_\bsigma $ is the universal central extension of $ \mathbb{P}{\rm SL}(2|2) $ by $ A_i $; in other words,
there exists a short exact sequence of Lie supergroups
\begin{gather*} \boldsymbol{1} \relbar\joinrel\relbar\joinrel\relbar\joinrel\relbar\joinrel\longrightarrow \C \times \C^\times \times \C \cong A_i \relbar\joinrel\relbar\joinrel\relbar\joinrel\relbar\joinrel\longrightarrow \bG_\bsigma \relbar\joinrel\relbar\joinrel\relbar\joinrel\relbar\joinrel\longrightarrow \mathbb{P}\text{SL}(2|2) \relbar\joinrel\relbar\joinrel\relbar\joinrel\relbar\joinrel\longrightarrow \boldsymbol{1}.
\end{gather*}
\item[$(3)$] If $ \bsigma = \boldsymbol{0} ( \!\in V \setminus V^\times ) $, i.e., $ \sigma_h = 0 $ for all $ h \in \{1,2,3\} $, then $
 ( \bG_\bsigma )_{\rm rd} \cong {\big( \C \times \C^\times \times \C \big)}^{\times 3} $ is the center of $ \bG_\bsigma $, and the quotient $ \bG_\bsigma / ( \bG_\bsigma \big)_{\rm rd}
 \cong \C^{8} $ is Abelian; in particular, $ \bG_\bsigma $ is a~central extension of $ \C^{8} $ by $ ( \C \times \C^\times \times \C )^{\times 3} $, i.e., there exists a short exact sequence of Lie supergroups, with non-Abelian middle term,
\begin{gather*}\boldsymbol{1} \relbar\joinrel\relbar\joinrel\relbar\joinrel\relbar\joinrel\longrightarrow {\big( \C \times \C^\times \times \C \big)}^{\times 3} \cong {\big( \bG_\bsigma \big)}_{\rm rd}
\relbar\joinrel\relbar\joinrel\relbar\joinrel\relbar\joinrel\longrightarrow \bG_\bsigma \relbar\joinrel\relbar\joinrel\relbar\joinrel\relbar\joinrel\longrightarrow \C^{8} \relbar\joinrel\relbar\joinrel\relbar\joinrel\relbar\joinrel\longrightarrow \boldsymbol{1}.
\end{gather*}
\end{enumerate}
\end{thm}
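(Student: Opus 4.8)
The plan is to reduce every assertion to its infinitesimal counterpart, Theorem~\ref{thm_g-spec}, by means of the equivalence between Lie supergroups and super Harish-Chandra pairs recalled in Sections~\ref{Liesgrp's->sHCp's} and~\ref{sHCp's->Liesgrp's}. By construction $ \bG_\bsigma = \Psi(\cP_\bsigma) $ with $ \cP_\bsigma = \big( G, \fgs \big) $, so applying the quasi-inverse functor $ \Phi $ recovers $ {(\bG_\bsigma)}_{\rm rd} = G $ and $ \operatorname{Lie}(\bG_\bsigma) = \fgs $. Since $ \Phi $ and $ \Psi $ are mutually quasi-inverse, they match normal closed connected sub-supergroups, central closed sub-supergroups, quotients and extensions of $ \bG_\bsigma $ with the corresponding data for the pair $ (G,\fgs) $; concretely, a normal closed connected sub-supergroup of $ \bG_\bsigma $ amounts to a normal connected subgroup $ N \trianglelefteq G $ together with an ideal $ \fn \trianglelefteq \fgs $ with $ \fn_\zero = \operatorname{Lie}(N) $. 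A second, hands-on route runs entirely through the explicit presentation of $ \bG_\bsigma(A) $ in Section~\ref{pres-G_s}, where one simply specializes the structure constants $ \sigma_\ell $ occurring in the conjugation relations.

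For part~(1), I would take a normal closed connected sub-supergroup $ \bN \trianglelefteq \bG_\bsigma $ and put $ (N,\fn) := \Phi(\bN) $, so that $ \fn $ is an ideal of $ \fgs $. As $ \bsigma \in V^\times $, Theorem~\ref{thm_g-spec}(1) (with Proposition~\ref{prop: properties-g_sigma}) gives that $ \fgs $ is simple, whence $ \fn \in \{0,\fgs\} $. If $ \fn = 0 $ then $ \operatorname{Lie}(N) = \fn_\zero = 0 $, so the connected group $ N $ — and hence $ \bN $ — is trivial; if $ \fn = \fgs $ then $ \operatorname{Lie}(N) = {\fgs}_\zero = \operatorname{Lie}(G) $ forces $ N = G $, so $ \bN = \bG_\bsigma $. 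Thus $ \bG_\bsigma $ is simple.

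For part~(2), with $ \sigma_i = 0 \neq \sigma_j, \sigma_k $, I would first read centrality of $ A_i $ straight off Section~\ref{pres-G_s}: every conjugation relation involving the generators $ h_{2\vep_i}(a), x_{\pm 2\vep_i}(a) $ of $ A_i $ carries a factor $ \sigma_i $ and so collapses to the trivial relation when $ \sigma_i = 0 $ (e.g., $ x_{2\vep_i}(a)\,x_{\beta_j}(\eta)\,{x_{2\vep_i}(a)}^{-1} = x_{\beta_j}(\eta)\,x_{\theta}(\delta_{i,j}\sigma_i a\eta) = x_{\beta_j}(\eta) $, and likewise for the remaining ones). This exhibits $ A_i \cong \C\times\C^\times\times\C $ as a central subgroup, in agreement with $ \fa_i $ being a central ideal of $ \fgs $ by Theorem~\ref{thm_g-spec}(2). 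The quotient $ \bG_\bsigma/A_i $ then corresponds under $ \Phi $ to the pair $ \big( G/A_i,\, \fgs/\fa_i \big) = \big( \text{SL}_2\times\text{SL}_2,\, \mathfrak{psl}(2|2) \big) $, i.e., to $ \mathbb{P}\text{SL}(2|2) $, giving the stated short exact sequence; and since $ \fgs $ is the universal central extension of $ \mathfrak{psl}(2|2) $ by $ \fa_i $ at the infinitesimal level (Theorem~\ref{thm_g-spec}(2)), the functoriality of $ \Psi $ promotes this to the universal central extension of supergroups.

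For part~(3), all three factors degenerate, so $ {(\bG_\bsigma)}_{\rm rd} = G \cong {(\C\times\C^\times\times\C)}^{\times 3} $, and setting every $ \sigma_\ell = 0 $ in Section~\ref{pres-G_s} trivializes all conjugation relations, showing $ G $ is central in $ \bG_\bsigma $ — as predicted by $ {\fg(\boldsymbol{0})}_\zero $ being the center of $ \fg(\boldsymbol{0}) $ in Theorem~\ref{thm_g-spec}(3). The quotient $ \bG_\bsigma/G $ corresponds to $ \big( \{1\},\, \fg(\boldsymbol{0})/{\fg(\boldsymbol{0})}_\zero \big) = \big( \{1\},\, \C^{0|8} \big) $, i.e., the totally odd Abelian supergroup written $ \C^8 $; the middle term is non-Abelian because some $ \bsigma $-independent odd relations survive, for instance $ x_{\beta_1}(\eta_1)\,x_{\beta_2}(\eta_2) = x_{2\vep_3}(\eta_2\eta_1)\,x_{\beta_2}(\eta_2)\,x_{\beta_1}(\eta_1) $ with $ x_{2\vep_3}(\eta_2\eta_1) \neq 1 $. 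The main obstacle in all three parts is the precise identification of the quotient supergroups with the named objects $ \mathbb{P}\text{SL}(2|2) $ and $ \C^8 $, together with the passage of the universal-extension property from the Lie superalgebra to the group; both are settled by the equivalence $ \Phi, \Psi $, which transports central extensions and quotients faithfully between $ \sHCp $ and Lie supergroups.
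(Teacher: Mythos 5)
Your proposal is correct and takes essentially the same approach as the paper: the paper's own (one-sentence) proof invokes exactly the two routes you develop, namely direct inspection of the presentation in Section~\ref{pres-G_s} and the relation $\operatorname{Lie}(\bG_\bsigma) = \fgs$ combined with Theorem~\ref{thm_g-spec} via the super Harish-Chandra pair equivalence of Sections~\ref{Liesgrp's->sHCp's}--\ref{sHCp's->Liesgrp's}. Your write-up simply fills in the details (centrality checks, identification of quotients, the surviving odd relations) that the paper leaves implicit.
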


\begin{proof} The claim follows directly from the presentation of $ \bG_\bsigma $ given in Section~\ref{pres-G_s} above, or also from the relation $ \operatorname{Lie}(\bG_\bsigma) = \fgs $ along with Theorem~\ref{thm_g-spec}.
\end{proof}

\subsection[Second family: the Lie supergroups $ \bG'_\bsigma $]{Second family: the Lie supergroups $\boldsymbol{\bG'_\bsigma}$} \label{subsect: LieSGroups G'_s}

 Given $ \bsigma = (\sigma_1,\sigma_2,\sigma_3) \in V $, let $ \fg' := \fgps $ be the complex Lie superalgebra associated with $ \bsigma $ as in Section~\ref{subsect-g'(sigma)}, and let $ \fg'_\zero $ be its even part. Fix the $ \C $-basis $ \{ X'_{2\varepsilon_i}, H'_{2\varepsilon_i}, X'_{-2\varepsilon_i} \}_{i=1,2,3} $ of $ \fg'_\zero $ as in Section~\ref{bases-g_s}, and set $ \fa'_i := \C X'_{2\varepsilon_i} \oplus \C H'_{2\varepsilon_i} \oplus \C X'_{-2\varepsilon_i} $ for each $ i $: each one of these is a Lie subalgebra of $ \fg'_\zero $, with $ \fg'_\zero = \fa'_1 \oplus \fa'_2 \oplus \fa'_3 $. Moreover, each Lie algebra $ \fa'_i $ is isomorphic to $ \fsl(2) $, an explicit isomorphism being given by $ X'_{2\varepsilon_i} \mapsto e $, $ H'_{2\varepsilon_i} \mapsto h $ and $ X'_{-2\varepsilon_i} \mapsto f $, where $ \{ e, h, f \} $ is the standard basis $ \fsl(2) $. It follows that $ \fg'_\zero $ is isomorphic to $ {\fsl(2)}^{\oplus 3} $.
 For each $ i \in \{1,2,3\} $, let $ A'_i $ be a copy of $ \text{\rm SL}_{ 2} $, and set $ G' := A'_1 \times A'_2 \times A'_3 $. By the previous analysis, $ \operatorname{Lie}(G') $ is isomorphic to $ \fg'_\zero $ and the $ \operatorname{Lie} (G' ) $-action lifts to a holomorphic $ G' $-action on $ \fg' $ again: in fact, one easily sees that this action is faithful too. With this action, $ \cP'_{\bsigma} := \big( G', \fg' \big) $ is a super Harish-Chandra pair (cf.\ Section~\ref{sHCp's}), which overall depends on $ \cP'_\bsigma $ (although~$ G' $ alone does not). Finally, we define
\begin{gather*} \bG'_\bsigma := \bG_{{}_{\cP'_{{}_{\bsigma}}}} \end{gather*}
 to be the complex Lie supergroup associated with the super Harish-Chandra pair $ \cP'_{\bsigma} $ via the equivalence of categories given in Section~\ref{sHCp's->Liesgrp's}.

\begin{free text} \label{pres-G'_s}
 {\bf A presentation of $ \boldsymbol{\bG'_\bsigma }$.} The supergroups $ \bG'_\bsigma $ can be described in concrete terms via an explicit presentation by generators and relations of all the abstract groups $ \bG'_\bsigma(A) $, with $ A $ ranging in $ \Wsalg $. To this end, we first consider the Lie group $ G' = A'_1 \times A'_2 \times A'_3 $ with $ A'_i \cong \text{\rm SL}_{ 2} $. Letting $ \exp\colon \fg'_\zero \cong \operatorname{Lie}\big(G'\big) \relbar\joinrel\relbar\joinrel\relbar\joinrel\longrightarrow G' $ be the exponential map, we consider $ x'_{2\varepsilon_i}(c) := \exp (c X'_{2\varepsilon_i} ) $, $ h'_{2\varepsilon_i}(c) := \exp (c H'_{2\varepsilon_i} ) $, $ x'_{-2\varepsilon_i}(c) := \exp (c X'_{-2\varepsilon_i} ) $ and $ h'_\theta(c) := \exp (c H'_\theta ) $ for all $ c \in \C $. Note that the commutation relations $ [ H'_{2\varepsilon_r}, H'_{2\varepsilon_s} ] = 0 $ along with $ \sigma_1 H'_{2\varepsilon_1} + \sigma_2 H'_{2\varepsilon_2} + \sigma_3 H'_{2\varepsilon_3} = 2 H'_\theta $ inside~$ \fgpsc $ together imply, inside $ G'_+ $, the group relations
 $ h'_{2\varepsilon_1}(\sigma_1 c) h'_{2\varepsilon_2}(\sigma_2 c) h'_{2\varepsilon_3}(\sigma_3 c) = {h'_\theta(c)}^2 $ for all $ c \in \C $.
 The complex Lie group $ G' $ is clearly generated by the set
\begin{gather*} \varGamma'_\zero := \{ x'_{2\varepsilon_i}(c), h'_{2\varepsilon_i}(c), h'_\theta(c), x'_{-2\varepsilon_i}(c) \,|\, c \in \C \}
 \end{gather*}
 (where the $ h'_\theta(c) $'s might be discarded, but we prefer to keep them). Then, looking at $ G' $ as a~(totally even) supergroup thought of as a group-valued functor $ G' \colon \Wsalg \longrightarrow \text{\rm ($ \cat{grps} $)} $, {\it each abstract group $ G'(A) $ of its $ A $-points~-- for $ A \in \Wsalg $~-- is generated by the set}
\begin{gather} \label{eq: gen-set G'}
 \varGamma'_\zero(A) := \{ x'_{2\varepsilon_i}(a), h'_{2\varepsilon_i}(a), h'_\theta(a), x'_{-2\varepsilon_i}(a) \,|\, a \in A_\zero \}.
\end{gather}
 Following Section~\ref{sHCp's->Liesgrp's}, we need as generators of $ \bG'_\bsigma(A) := \bG_{{}_{\cP'_{{}_{\bsigma}}}}(A) $ all the elements of $ G'(A) $ and all those of the form $ x'_{\pm\theta}(\eta) := ( 1 + \eta X'_{\pm\theta} ) $ or $ x'_{\pm\beta_i}(\eta) := ( 1 + \eta X'_{\pm\beta_i} ) $ with $ \eta \in A_\one $ and $ i \in \{1,2,3\} $~-- since now we fix $ {\big\{ Y'_i \big\}}_{i \in I} = \{ X'_{\pm\theta}, X'_{\pm\beta_i} \}_{i=1,2,3} $ as our $ \C $-basis of $ \fg'_\one $; we denote the set of all the latter by $ \varGamma'_\one(A) := \{ x'_{\pm\theta}(\eta), x'_{\pm\beta_i}(\eta) \,|\, \eta \in A_\one \} $.

 Implementing the recipe in Section~\ref{sHCp's->Liesgrp's}, and recalling that $ G'(A) $ is generated by $ \varGamma'_\zero(A) $, we can now slightly modify the relations presented in Section~\ref{sHCp's->Liesgrp's} and consider instead the following, alternative {\it full set of relations} among the generators of $ \bG'_\bsigma(A) $:
\begin{gather*}
 1_{{}_G'} = 1, \qquad g' \cdot g'' = g' \cdot_{{}_{G'}} g'', \qquad \forall\, g', g'' \in G'(A),\\
 h'_{2\vep_i}(a) x'_{\pm\beta_j}(\eta) {h'_{2\vep_i}(a)}^{-1} = x'_{\pm\beta_j}\big( e^{\pm {(-1)}^{-\delta_{i,j}} a} \eta \big),\\
 h'_{2\vep_i}(a) x'_{\pm\theta}(\eta) {h'_{2\vep_i}(a)}^{-1} = x'_{\pm\theta}\big( e^{\pm a} \eta \big),\\
 h'_\theta(a) x'_{\pm\beta_i}(\eta) {h'_\theta(a)}^{-1} = x'_{\pm\beta_i}\big( e^{\mp \sigma_i a} \eta \big), \qquad h'_\theta(a) x'_{\pm\theta}(\eta) {h'_\theta(a)}^{-1} = x'_{\pm\theta}(\eta,\\
 x'_{2\vep_i}(a) x'_{\beta_j}(\eta) {x'_{2\vep_i}(a)}^{-1} = x'_{\beta_j}(\eta) x'_{\theta}( \delta_{i,j} a \eta ),\\
 x'_{2\vep_i}(a) x'_{-\beta_j}(\eta) {x'_{2\vep_i}(a)}^{-1} = x'_{-\beta_j}(\eta) x'_{\beta_k}( (1-\delta_{i,j}) a \eta ),\\
 x'_{-2\vep_i}(a) x'_{\beta_j}(\eta) {x'_{-2\vep_i}(a)}^{-1} = x'_{\beta_j}(\eta) x'_{-\beta_k}( (1-\delta_{i,j}) a \eta ),\\
 x'_{-2\vep_i}(a) x'_{-\beta_j}(\eta) {x'_{-2\vep_i}(a)}^{-1} = x'_{-\beta_j}(\eta) x'_{-\theta}( \delta_{i,j} a \eta ),\\
 x'_{2\vep_i}(a) x'_{\theta}(\eta) {x'_{2\vep_i}(a)}^{-1} = x'_{\theta}(\eta),\\
 x'_{2\vep_i}(a) x'_{-\theta}(\eta) {x'_{2\vep_i}(a)}^{-1} = x'_{-\theta}(\eta) x'_{-\beta_i}( a \eta ),\\
 x'_{-2\vep_i}(a) x'_{\theta}(\eta) {x'_{-2\vep_i}(a)}^{-1} = x'_{\theta}(\eta) x'_{\beta_i}( a \eta ),\\
 x'_{-2\vep_i}(a) x'_{-\theta}(\eta) {x'_{-2\vep_i}(a)}^{-1} = x'_{-\theta}(\eta),\\
 x'_{\beta_i}(\eta_i) x'_{\beta_j}(\eta'_j) = x'_{2\vep_k}( +(1-\delta_{i,j}) \sigma_i \eta'_j \eta_i) x'_{\beta_j}(\eta'_j) x'_{\beta_i}(\eta_i),\\
 x'_{-\beta_i}(\eta_i) x'_{-\beta_j}(\eta'_j) = x'_{-2\vep_k}( -(1-\delta_{i,j}) \sigma_i \eta'_j \eta_i) x'_{-\beta_j}(\eta'_j) x'_{-\beta_i}(\eta_i),\\
 x'_{\beta_i}(\eta_i) x'_{-\beta_j}(\eta'_j) = h'_{2\vep_i}(\delta_{i,j} \sigma_i \eta'_j \eta_i) h'_{\theta}(-\delta_{i,j} \eta'_j \eta_i) x'_{-\beta_j}(\eta'_j) x'_{\beta_i}(\eta_i),\\
 x'_{\beta_i}(\eta_i) x'_{\theta}(\eta) = x'_{\theta}(\eta) x'_{\beta_i}(\eta_i), \qquad x'_{\beta_i}(\eta_i) x'_{-\theta}(\eta) = x'_{-2\vep_i}(+\sigma_i \eta \eta_i) x'_{-\theta}(\eta) x'_{+\beta_i}(\eta_i),\\
 x'_{-\beta_i}(\eta_i) x'_{\theta}(\eta) = x'_{2\vep_i}(-\sigma_i \eta \eta_i) x'_{\theta}(\eta) x'_{-\beta_i}(\eta_i), \qquad x'_{-\beta_i}(\eta_i) x'_{-\theta}(\eta) = x'_{-\theta}(\eta) x'_{-\beta_i}(\eta_i),\\
 x'_{\theta}(\eta_+) x'_{-\theta}(\eta_-) = v h'_{\theta}(\eta_- \eta_+ ) x'_{-\theta}(\eta_-) x'_{\theta}(\eta_+),\\
 x'_{\pm\beta_i}(\eta') x'_{\pm\beta_i}(\eta'') = x'_{\pm\beta_i}(\eta'+\eta''), \qquad x'_{\pm\theta}(\eta') x'_{\pm\theta}(\eta'') = x'_{\pm\theta}(\eta'+\eta'')
 \end{gather*}
 with $ \{i,j,k\} = \{1,2,3\} $.
\end{free text}

\begin{free text} \label{sing-special-G'_s} {\bf Singular specializations of the supergroup(s) $\boldsymbol{\bG'_\bsigma}$.} By construction, for the supergroups $ \bG'_\bsigma $ we have that
\begin{gather*}
\text{\it $ \bG'_\bsigma $ is simple $($as a Lie supergroup$)$ for all $ \bsigma = (\sigma_1,\sigma_2,\sigma_3) \in V^\times $.}
\end{gather*}
 Indeed, this follows from the presentation of $ \bG'_\bsigma $ in Section~\ref{pres-G'_s} above, but also as a fallout of the relation $ \operatorname{Lie}\big(\bG'_\bsigma\big) = \fgpsc = \fg_\bsigma $ along with Proposition~\ref{prop: properties-g_sigma}. The situation is different at ``singular values'' of the parameter $ \bsigma $; the complete result is
\end{free text}

\begin{thm} \label{thm_G'-spec}Given $ \bsigma \in V $, keep notation as above.
\begin{enumerate}\itemsep=0pt
\item[$(1)$] If $ \bsigma \in V^\times $, then the Lie supergroup $ \bG'_\bsigma $ is simple.
\item[$(2)$] If $ \bsigma \in V \setminus V^\times $, with $ \sigma_i = 0 $ and $ \sigma_j \not= 0 \not= \sigma_k $ for $ \{i,j,k\} = \{1,2,3\} $, then letting
 $ \bB'_i $ be the Lie subsupergroup of $ \bG'_\bsigma $ defined on every $ A \in \Wsalg $ by
\begin{gather*}
 \bB'_i(A) := \big\langle \{ h'_{2 \varepsilon_t}(a), x'_{\pm\alpha}(b), x'_{\pm\beta}(\eta)\}^{t \not= i, \alpha \in \Delta_\zero \setminus \{2 \varepsilon_i\}, \beta \in \Delta_\one}_{ a \in A_\zero, b \in A_\zero, \eta \in A_\one} \big\rangle
 \end{gather*}
we have $ \bB'_i \trianglelefteq \bG'_\bsigma $ $($a normal Lie subsupergroup$)$, $ A'_i \leq \bG'_\bsigma $ $($a Lie subsupergroup$)$, and there exist isomorphisms $ \mathbf{B}^{ \prime}_i \cong {\mathbb{P}\text{\rm SL}(2|2)} $, $ A'_i \cong \text{\rm SL}_2 $ and $ \bG'_\bsigma \cong \text{\rm SL}_2 \ltimes {\mathbb{P}\text{\rm SL}(2|2)} $~-- a semidirect product of Lie supergroups. In short, there exists a~split short exact sequence
\begin{gather*} \boldsymbol{1} \relbar\joinrel\relbar\joinrel\relbar\joinrel\longrightarrow {\mathbb{P}\text{\rm SL}(2|2)} \cong \bB'_i \relbar\joinrel\relbar\joinrel\relbar\joinrel\longrightarrow \bG'_\bsigma {\buildrel
{{\displaystyle \dashleftarrow \hskip-4pt \text{$-$} \text{$-$} \text{$-$}}} \over {\relbar\joinrel\relbar\joinrel\relbar\joinrel\relbar\joinrel\longrightarrow}} A'_i \cong \text{\rm SL}_2 \relbar\joinrel\relbar\joinrel\relbar\joinrel\longrightarrow \boldsymbol{1}.
\end{gather*}

\item[$(3)$] If $ \bsigma = \boldsymbol{0} (\! \in V \setminus V^\times ) $, i.e., $ \sigma_h = 0 $ for all $ h \in \{1,2,3\} $, then
 letting $ {\big( \bG'_\bsigma \big)}_\one $ be the Lie subsupergroup of $ \bG'_\bsigma $ defined on every $ A \in \Wsalg $ by
\begin{gather*} ( \bG'_\bsigma )_\one(A) := \big\langle \{ x'_\alpha(\eta) \}^{\alpha \in \Delta_\one}_{\eta \in A_\one} \big\rangle \end{gather*}
we have $( \bG'_\bsigma)_{\rm rd} \cong \text{\rm SL}_2^{ 3} $ and $( \bG'_\bsigma )_\one \cong \C^{ 8} $ as Lie $($super$)$groups,
 $( \bG'_\bsigma)_\one \cong \boxtimes_{i=1}^3 \Box_i \cong \Box^{ \boxtimes 3} $ $ \big({\cong} \C^{ 8} \big) $
 as modules over $( \bG'_\bsigma)_\zero \cong \text{\rm SL}_2^{ 3} $~-- where $ \Box_i \cong \Box := \C \vert + \rangle \oplus \C \vert -\rangle $
 is the tautological $2$-dimensional module over the $ i $-th copy $ \text{\rm SL}^{(i)}_2 $ of $ \text{\rm SL}_2 $ $($for $ i = 1, 2, 3)$.
 Finally, we have{\samepage
\begin{gather*} \bG'_\bsigma \cong ( \bG'_\bsigma )_{\rm rd} \ltimes ( \bG'_\bsigma )_\one \cong \text{\rm SL}_2^{ 3} \ltimes \big( {\boxtimes}_{i=1}^3 \Box_i \big) \cong \text{\rm SL}_2^{ 3} \ltimes \Box^{ \boxtimes 3} \end{gather*}
-- a semidirect product of Lie supergroups.}

 In other words, there is a {\it split} short exact sequence
\begin{gather*} \boldsymbol{1} \relbar\joinrel\relbar\joinrel\relbar\joinrel\relbar\joinrel\longrightarrow
 \Box^{ \boxtimes 3}
 \cong {\big( \bG'_\bsigma \big)}_\one \relbar\joinrel\relbar\joinrel\relbar\joinrel\relbar\joinrel\longrightarrow \bG'_\bsigma {\buildrel
{{\displaystyle \dashleftarrow \hskip-4pt \text{$-$} \text{$-$} \text{$-$}}} \over {\relbar\joinrel\relbar\joinrel\relbar\joinrel\relbar\joinrel\longrightarrow}} {\big( \bG'_\bsigma \big)}_{\rm rd} \cong \text{\rm SL}_2^{\times 3} \relbar\joinrel\relbar\joinrel\relbar\joinrel\relbar\joinrel\longrightarrow \boldsymbol{1}.
\end{gather*}
\end{enumerate}
\end{thm}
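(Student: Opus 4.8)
The plan is to follow verbatim the strategy behind Theorem \ref{thm_G-spec}, transporting the Lie-superalgebra structure results of Theorem \ref{thm_g'-spec} up to the group level through the equivalence between Lie supergroups and super Harish-Chandra pairs of Sections \ref{Liesgrp's->sHCp's}--\ref{sHCp's->Liesgrp's}, using the identity $\operatorname{Lie}(\bG'_\bsigma) = \fgps$; equivalently, everything can be read off the explicit presentation of the abstract groups $\bG'_\bsigma(A)$ given in Section \ref{pres-G'_s}. For part $(1)$, recall that under the functor $\Phi$ a normal, closed, connected Lie subsupergroup of $\bG'_\bsigma$ corresponds to a $\big(\bG'_\bsigma\big)_{\rm rd}$-stable Lie ideal of $\operatorname{Lie}(\bG'_\bsigma) = \fgps$; since the latter is simple for $\bsigma \in V^\times$ by Theorem \ref{thm_g'-spec}$(1)$ (through Proposition \ref{prop: properties-g_sigma}), the only such subsupergroups are the trivial one and $\bG'_\bsigma$ itself, i.e.\ $\bG'_\bsigma$ is simple.

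For part $(2)$, I would integrate the split short exact sequence of Theorem \ref{thm_g'-spec}$(2)$. The complement $\fa'_i \cong \fsl(2)$ is totally even, hence integrates to the (totally even) subsupergroup $A'_i \cong \text{\rm SL}_2$; the ideal $\fb'_i \cong \mathfrak{psl}(2|2)$ has even part $\fa'_j \oplus \fa'_k$, which integrates to the reduced group $A'_j \times A'_k \cong \text{\rm SL}_2^{ 2}$. Thus the subsupergroup $\bB'_i$ attached via $\Psi$ to the sub-super Harish-Chandra pair $\big( A'_j \times A'_k,\, \fb'_i \big)$ is, by definition, the supergroup we denote $\mathbb{P}\text{\rm SL}(2|2)$, and the sHCp splitting $\cP'_\bsigma \cong \big(A'_i, \fa'_i\big) \ltimes \big(A'_j \times A'_k, \fb'_i\big)$ is carried by the functor $\Psi$ to the asserted semidirect decomposition $\bG'_\bsigma \cong \text{\rm SL}_2 \ltimes \mathbb{P}\text{\rm SL}(2|2)$, with $\bB'_i$ normal and $A'_i$ a section; note that the generators of $\bB'_i$ listed in the statement are precisely those integrating a spanning set of $\fb'_i$.

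For part $(3)$, the input is Theorem \ref{thm_g'-spec}$(3)$: at $\bsigma = \boldsymbol{0}$ one has $\fg'(\boldsymbol{0}) \cong \fsl(2)^{\oplus 3} \ltimes \Box^{\boxtimes 3}$ with trivial bracket on the odd part. The even part integrates to the reduced group $\big(\bG'_\bsigma\big)_{\rm rd} \cong \text{\rm SL}_2^{ 3}$, while the vanishing of the odd bracket means that, specializing the presentation of Section \ref{pres-G'_s} at $\bsigma = \boldsymbol{0}$, all odd root generators $x'_\alpha(\eta)$ with $\alpha \in \Delta_\one$ pairwise commute; hence the subsupergroup $\big(\bG'_\bsigma\big)_\one$ they generate is the purely odd abelian supergroup $\Box^{\boxtimes 3} \cong \C^{ 8}$, carrying its natural $\big(\bG'_\bsigma\big)_{\rm rd}$-module structure $\boxtimes_{i=1}^3 \Box_i$. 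Feeding the resulting split sHCp decomposition into $\Psi$ then yields $\bG'_\bsigma \cong \text{\rm SL}_2^{ 3} \ltimes \Box^{\boxtimes 3}$ and the announced split short exact sequence.

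The main obstacle will be the purely group-theoretic verification that the candidate $\bB'_i$ (resp.\ $\big(\bG'_\bsigma\big)_\one$) is genuinely a \emph{closed, normal} subsupergroup possessing a complement, so that the sHCp-level splitting really descends. This rests on exactly the vanishing phenomenon already used for Theorem \ref{thm_g'-spec}: in the presentation of Section \ref{pres-G'_s} every relation that could push a product of generators of $\bB'_i$ outside $\bB'_i$ carries a factor $\sigma_i$, e.g.\ $x'_{\beta_j}(\eta_j)\, x'_{\beta_k}(\eta_k) = x'_{2\vep_i}\big((1-\delta_{j,k})\sigma_i\,\eta_k\eta_j\big)\, x'_{\beta_k}(\eta_k)\, x'_{\beta_j}(\eta_j)$ and the term $h'_{2\vep_i}(\sigma_i\cdots)$ in the $x'_{\beta_i}\,x'_{-\beta_i}$ relation, so that all of these collapse when $\sigma_i = 0$; meanwhile the surviving $h'_\theta(c) = h'_{2\vep_j}(2^{-1}\sigma_j c)\, h'_{2\vep_k}(2^{-1}\sigma_k c)$ lies in $\bB'_i$, since $H'_\theta \in \operatorname{span}\big(H'_{2\vep_j}, H'_{2\vep_k}\big)$ there. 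Once this bookkeeping — entirely parallel to the Lie-algebra computation — is checked, closedness, normality and the splitting follow, completing the proof.
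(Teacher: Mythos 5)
Your proposal is correct and takes essentially the same approach as the paper: the paper's own (one-sentence) proof invokes precisely the two arguments you develop, namely reading the claims off the presentation of Section~\ref{pres-G'_s}, or transporting Theorem~\ref{thm_g'-spec} to the group level through the relation $\operatorname{Lie}\big(\bG'_\bsigma\big) = \fg'(\bsigma)$ and the super Harish-Chandra pair equivalence of Section~\ref{sHCp's->Liesgrp's}. You have merely supplied the bookkeeping the paper leaves implicit (the $\sigma_i$-factors killing the offending relations, and the fact that $h'_\theta$ collapses into $\bB'_i$, resp.\ becomes trivial at $\bsigma = \boldsymbol{0}$), which matches the paper's intent exactly.
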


\begin{proof} Like for Theorem \ref{thm_G-spec}, the present claim can be obtained from the presentation of $ \bG'_\bsigma $ in Section~\ref{pres-G'_s}, or otherwise from the relation $ \operatorname{Lie}\big( \bG'_\bsigma \big) = \fg'(\bsigma) $ along with Theorem \ref{thm_g'-spec}.
\end{proof}

\subsection[Third family: the Lie supergroups $ \bG''_\bsigma $]{Third family: the Lie supergroups $\boldsymbol{\bG''_\bsigma}$} \label{subsect: LieSGroups G''_s}

 Given $ \bsigma = (\sigma_1,\sigma_2,\sigma_3) \in V $, let $ \fg'' := \fgss $ be the complex Lie superalgebra associated with~$ \bsigma $ as in Section~\ref{subsect-g''_Z[ts]}, and let $ \fg''_\zero $ be its even part. Fix the elements $ X_\alpha, H'_{2\varepsilon_i}, H'_\theta $ (with $ \alpha \in \Delta $, $ i \in \{1,2,3\} $) of $ \fg $ as in Section~\ref{third-basis} and set $ \fa''_i := \C X_{2\varepsilon_i} \oplus \C H'_{2\varepsilon_i} \oplus \C X_{-2\varepsilon_i} $ for each $ i $: the latter are Lie subalgebras of $ \fg''_\zero $ such that $ \fg''_\zero = \fa''_1 \oplus \fa''_2 \oplus \fa''_3 $. Moreover, every $ \fa''_i $ is isomorphic to $ \fsl(2) $ when $ \sigma_i \not= 0 $~-- an explicit isomorphism being given by $ X_{2\varepsilon_i} \mapsto \sigma_i e $, $ H'_{2\varepsilon_i} \mapsto h $ and $ X_{-2\varepsilon_i} \mapsto \sigma_i f $, where $ \{ e, h, f \} $ is the standard basis $ \fsl(2) $~-- while for $ \sigma_i = 0 $ it is isomorphic to the Lie subalgebra of $ \mathfrak{b}_+ \oplus \mathfrak{b}_- $, with $ \mathfrak{b}_+ := \C e + \C h $ and $ \mathfrak{b}_- := \C h + \C f $ being the standard Borel subalgebras inside~$ \fsl(2) $, with $ \C $-basis $ \{ (e,0), (h,h), (0,f) \} $.

 Let $ B_\pm $ be the Borel subgroup of $ \text{\rm SL}_{ 2} $ of all upper, resp.\ lower, triangular matrices, and let $ S $ be the subgroup of $ B_+ \times B_- $ whose elements are all the pairs of matrices $ (X_+,X_-) $ such that the diagonal parts of $ X_+ $ and of $ X_- $ are the same. For each $ i \in \{1,2,3\} $, let $ A''_i $ (depending on $ \sigma_i $) respectively be a copy of $ \text{\rm SL}_{ 2} $ if $ \sigma_i \not= 0 $ and a copy of $ S $ otherwise; then set $ G'' := A''_1 \times A''_2 \times A''_3 $. The adjoint action of $ \fg''_\zero \cong \operatorname{Lie}\big(G''\big) $ on $ \fg'' $ lifts to a holomorphic $ G'' $-action on $ \fg'' $, which is faithful again; then the pair $ \cP''_{\bsigma} := \big( G'', \fg'' \big) $ with this action is a super Harish-Chandra pair, in the sense of Section~\ref{sHCp's}.
 At last, we can define
\begin{gather*} \bG''_\bsigma := \bG_{{}_{\cP''_{{}_{\bsigma}}}} \end{gather*}
 as being the complex Lie supergroup associated with the super Harish-Chandra pair $ \cP''_{\bsigma} $ through the equivalence of categories given in Section~\ref{sHCp's->Liesgrp's}.

\begin{free text} \label{pres-G''_s} {\bf A presentation of $ \bG''_\bsigma $.} In order to describe the supergroups $ \bG''_\bsigma $, we aim now for an explicit presentation by generators and relations of the abstract groups $ \bG''_\bsigma(A) $, for all \mbox{$ A \in \Wsalg $}. To start with, let $ G'' = A''_1 \times A''_2 \times A''_3 $ be the complex Lie group considered above, and let $ \exp \colon \fg''_\zero \cong \operatorname{Lie}\big(G''\big) \relbar\joinrel\relbar\joinrel\relbar\joinrel\longrightarrow G'' $ be the exponential map: then consider
 $ x_{2\varepsilon_i}(c) := \exp\big(c X_{2\varepsilon_i}\big) $,
 $ h'_{2\varepsilon_i}(c) := \exp\big(c H'_{2\varepsilon_i}\big) $,
 $ x_{-2\varepsilon_i}(c) := \exp\big(c X_{-2\varepsilon_i}\big) $
and
 $ h'_\theta(c) := \exp (c H'_\theta ) $ for all $ c \in \C $. It is clear that $ G'' $ is generated by the set
\begin{gather*} \varGamma''_\zero := \{ x_{2\varepsilon_i}(c), h'_{2\varepsilon_i}(c), h'_\theta(c), x_{-2\varepsilon_i}(c) \,|\, c \in \C \} \end{gather*}
(actually the $ h'_\theta(c) $'s might be discarded, but we choose to keep them); therefore, looking at $ G'' $ as a supergroup, thought of as a group-valued functor $ G'' \colon \Wsalg \longrightarrow \text{\rm ($ \cat{grps} $)} $, {\it every abstract group $ G''(A) $ of its $ A $-points~-- for $ A \in \Wsalg $~-- is generated by the set}
\begin{gather} \label{eq: gen-set G''}
 \varGamma''_\zero(A) := \{ x_{2\varepsilon_i}(a), h'_{2\varepsilon_i}(a), h'_\theta(a), x_{-2\varepsilon_i}(a) \,|\, a \in A_\zero \}.
\end{gather}

According to Section~\ref{sHCp's->Liesgrp's}, the group $ \bG''_\bsigma(A) := \bG_{{}_{\cP''_{{}_{\bsigma}}}}(A) $ is generated by $ G''(A) $ and all elements of the form $ x_{\pm\theta}(\eta) := \big( 1 + \eta X_{\pm\theta} \big) $ or $ x_{\pm\beta_i}(\eta) := \big( 1 + \eta X_{\pm\beta_i} \big) $ with $ \eta \in A_\one $ and $ i \in \{1,2,3\} $~-- as now the chosen $ \C $-basis of $ \fg''_\one $ is $ {\big\{ Y'_i \big\}}_{i \in I} = \big\{ X_{\pm\theta}, X_{\pm\beta_i} \big\}_{i=1,2,3} $; the set of all the latter is denoted $ \varGamma''_\one(A) := \big\{ x_{\pm\theta}(\eta), x_{\pm\beta_i}(\eta) \,|\, \eta \in A_\one \big\} $~-- coinciding with $ \varGamma'_\one(A) $ in Section~\ref{pres-G'_s}.

 From the recipe in Section~\ref{sHCp's->Liesgrp's}, and the fact that $ G''(A) $ is generated by $ \varGamma''_\zero(A) $, with a~slight modification of the relations in Section~\ref{sHCp's->Liesgrp's} we can find the following {\it full set of relations} among generators of $ \bG''_\bsigma(A) $ (for all $ \{i,j,k\} = \{1,2,3\} $):
\begin{gather*}
 1_{{}_{G''}} = 1, \qquad g' \cdot g'' = g' \cdot_{{}_{G''}} g'', \qquad \forall \, g', g'' \in G''(A),\\
 h'_{2\vep_i}(a) x_{\pm\beta_j}(\eta) {h'_{2\vep_i}(a)}^{-1} = x_{\pm\beta_j}\big( e^{\pm {(-1)}^{\delta_{i,j}} a} \eta \big),\\
 h'_{2\vep_i}(a) x_{\pm\theta}(\eta) {h'_{2\vep_i}(a)}^{-1} = x_{\pm\theta}\big( e^{\pm a} \eta \big),\\
 h_\theta(a) x_{\pm\beta_i}(\eta) {h_\theta(a)}^{-1} = x_{\pm\beta_i}\big( e^{\mp \sigma_i a} \eta \big), \qquad h_\theta(a) x_{\pm\theta}(\eta) {h_\theta(a)}^{-1} = x_{\pm\theta}(\eta),\\
 x_{2\vep_i}(a) x_{\beta_j}(\eta) {x_{2\vep_i}(a)}^{-1} = x_{\beta_j}(\eta) x_{\theta}( \delta_{i,j} \sigma_i a \eta ),\\
 x_{2\vep_i}(a) x_{-\beta_j}(\eta) {x_{2\vep_i}(a)}^{-1} = x_{-\beta_j}(\eta) x_{\beta_k}( (1-\delta_{i,j}) \sigma_i a \eta ),\\
 x_{-2\vep_i}(a) x_{\beta_j}(\eta) {x_{-2\vep_i}(a)}^{-1} = x_{\beta_j}(\eta) x_{-\beta_k}( (1-\delta_{i,j}) \sigma_i a \eta ),\\
 x_{-2\vep_i}(a) x_{-\beta_j}(\eta) {x_{-2\vep_i}(a)}^{-1} = x_{-\beta_j}(\eta) x_{-\theta}( \delta_{i,j} \sigma_i a \eta ),\\
 x_{2\vep_i}(a) x_{\theta}(\eta) {x_{2\vep_i}(a)}^{-1} = x_{\theta}(\eta),\\
 x_{2\vep_i}(a) x_{-\theta}(\eta) {x_{2\vep_i}(a)}^{-1} = x_{-\theta}(\eta) x_{-\beta_i}( \sigma_i a \eta ),\\
 x_{-2\vep_i}(a) x_{\theta}(\eta) {x_{-2\vep_i}(a)}^{-1} = x_{\theta}(\eta) x_{\beta_i}( \sigma_i a \eta ),\\
 x_{-2\vep_i}(a) x_{-\theta}(\eta) {x_{-2\vep_i}(a)}^{-1} = x_{-\theta}(\eta),\\
 x_{\beta_i}(\eta_i) x_{\beta_j}(\eta'_j) = x_{2\vep_k}((1-\delta_{i,j}) \eta'_j \eta_i) x_{\beta_j}(\eta'_j) x_{\beta_i}(\eta_i),\\
 x_{-\beta_i}(\eta_i) x_{-\beta_j}(\eta'_j) = x_{-2\vep_k}( -(1-\delta_{i,j}) \eta'_j \eta_i) x_{-\beta_j}(\eta'_j) x_{-\beta_i}(\eta_i),\\
 x_{\beta_i}(\eta_i) x_{-\beta_j}(\eta'_j) = h'_{2\vep_i}(\delta_{i,j} \sigma_i \eta'_j \eta_i) h'_{\theta}(-\delta_{i,j} \eta'_j \eta_i) x_{-\beta_j}(\eta'_j) x_{\beta_i}(\eta_i),\\
 x_{\beta_i}(\eta_i) x_{\theta}(\eta) = x_{\theta}(\eta) x_{\beta_i}(\eta_i), \qquad x_{\beta_i}(\eta_i) x_{-\theta}(\eta) = x_{-2\vep_i}(\eta \eta_i) x_{-\theta}(\eta) x_{+\beta_i}(\eta_i),\\
 x_{-\beta_i}(\eta_i) x_{\theta}(\eta) = x_{2\vep_i}(-\eta \eta_i) x_{\theta}(\eta) x_{-\beta_i}(\eta_i), \qquad x_{-\beta_i}(\eta_i) x_{-\theta}(\eta) = x_{-\theta}(\eta) x_{-\beta_i}(\eta_i),\\
 x_{\theta}(\eta_+) x_{-\theta}(\eta_-) = h'_{\theta}(\eta_- \eta_+ ) x_{-\theta}(\eta_-) x_{\theta}(\eta_+),\\
 x_{\pm\beta_i}(\eta') x_{\pm\beta_i}(\eta'') = x_{\pm\beta_i}(\eta'+\eta''), \qquad x_{\pm\theta}(\eta') x_{\pm\theta}(\eta'') = x_{\pm\theta}(\eta'+\eta'').
\end{gather*}
\end{free text}

\begin{free text} \label{sing-special-G''_s} {\bf Singular specializations of the supergroup(s) $\boldsymbol{\bG''_\bsigma}$.} One sees easily that for the supergroups $ \bG''_\bsigma $ we have
\begin{gather*}
\text{\it $ \bG''_\bsigma $ is simple $($as a Lie supergroup$)$ for all $ \bsigma = (\sigma_1,\sigma_2,\sigma_3) \in V^\times $.}
\end{gather*}

 This follows from the presentation of $ \bG''_\bsigma $ in Section~\ref{pres-G''_s} above, and also as a consequence of the relation $ \operatorname{Lie}\big(\bG''_\bsigma\big) = \fgss = \fg_\bsigma $ along with Proposition~\ref{prop: properties-g_sigma}.

 Things change, instead, for ``singular values'' of $ \bsigma $: hereafter is the general result.
\end{free text}

\begin{thm} \label{thm_G''-spec} Given $ \bsigma \in V $, keep notation as above.
\begin{enumerate}\itemsep=0pt
\item[$(1)$] If $ \bsigma \in V^\times $, then the Lie supergroup $ \bG''_\bsigma $ is simple.
\item[$(2)$] If $ \bsigma \in V \setminus V^\times $, with $ \sigma_i = 0 $ and $ \sigma_j \not= 0 \not= \sigma_k $ for $ \{i,j,k\} = \{1,2,3\} $, consider the subsupergroup $ \bK''_{ i} $ of $ \bG''_\bsigma $ which is given by
\begin{gather*} \bK''_{ i}(A) := \big\langle \{ x_{2 \vep_i}(a_+), x_{-2 \vep_i}(a_-) \}_{a_+, a_- \in A_\zero} \big\rangle \qquad \forall\, A \in \Wsalg.
\end{gather*}
Then $ \bK''_{ i} $ is an Abelian normal Lie subsupergroup of $ \bG''_\bsigma $, hence~-- letting $ \overline{\bB''_i} := \bG''_\bsigma / \bK''_i $ be the quotient supergroup~-- there exists a short exact sequence
\begin{gather*} \boldsymbol{1} \relbar\joinrel\relbar\joinrel\relbar\joinrel\longrightarrow \bK''_{ i} \relbar\joinrel\relbar\joinrel\relbar\joinrel\longrightarrow \bG''_\bsigma \relbar\joinrel\relbar\joinrel\relbar\joinrel\longrightarrow \overline{\bB''_i} \relbar\joinrel\relbar\joinrel\relbar\joinrel\longrightarrow \boldsymbol{1}.\end{gather*}
Furthermore, defining inside $ \overline{\bB''_i}(A) $ the two subgroups
 $ \overline{\bH''_{ i}}(A) := \big\langle \{ \overline{h'_{2 \vep_i}(a_i)} \}_{a_i \in A_\zero} \big\rangle $ and
 $ \overline{\bD''_i}(A) := \big\langle \{ \overline{h'_{2 \vep_j}(a_j)}, \overline{x_{+2\vep_j}(c_{\scriptscriptstyle +})}, \overline{x_{-2\vep_j}(c_{\scriptscriptstyle -})}, \overline{x_\beta(\eta)} \}_{j \not= i, \beta \in \Delta_\one}^{a_j, c_\pm \in A_\zero, \eta \in A_\one} \big\rangle $~-- for all $ A \in \Wsalg $~-- we overall find two Lie subsupergroups $ \overline{\bD''_i} $ and $ \overline{\bH''_{ i}} $ of $ \overline{\bB''_i} $ such that $ \overline{\bD''_i} $ is a normal Lie subsupergroup with $ \overline{\bD''_i} \cong \mathbb{P}\text{\rm SL}(2|2) $, $ \overline{\bH''_{ i}} \cong \C^\times $, and $ \overline{\bB''_i} \cong \overline{\bH''_{ i}} \ltimes \overline{\bD''_i} $~-- a semidirect product of Lie supergroups. In short, there exists a {\it split} short exact sequence
 \begin{gather*} \boldsymbol{1} \relbar\joinrel\relbar\joinrel\relbar\joinrel\relbar\joinrel\longrightarrow \mathbb{P}\text{\rm SL}(2|2) \cong \overline{\bD''_i} \relbar\joinrel\relbar\joinrel\relbar\joinrel\relbar\joinrel\longrightarrow \overline{\bB''_i} {\buildrel
{{\displaystyle \dashleftarrow \hskip-4pt \text{$-$} \hskip-0pt \text{$-$} \hskip0pt \text{$-$}}} \over {\relbar\joinrel\relbar\joinrel\relbar\joinrel\relbar\joinrel\longrightarrow}} \overline{\bH''_{ i}} \cong \C^\times \relbar\joinrel\relbar\joinrel\relbar\joinrel\relbar\joinrel\longrightarrow \boldsymbol{1}.
\end{gather*}
\item[$(3)$] If $ \bsigma = \boldsymbol{0}$ $(\! \in V \setminus V^\times ) $, i.e., $ \sigma_h = 0 $ for all $ h \in \{1,2,3\} $, then $ \bK'' := \mathop{\times}\limits_{i=1}^3 \bK''_{ i} $ is an Abelian normal Lie subsupergroup, hence $ \overline{\bB''} := \bG''_\bsigma / \bK'' $ is a quotient Lie supergroup $($of~$ \bG''_\bsigma )$; therefore, there exists a short exact sequence
\begin{gather*} \boldsymbol{1} \relbar\joinrel\relbar\joinrel\relbar\joinrel\relbar\joinrel\longrightarrow \bK'' := {\textstyle \mathop{\times}\limits_{i=1}^3} \bK''_{ i} \relbar\joinrel\relbar\joinrel\relbar\joinrel\relbar\joinrel\longrightarrow \bG''_\bsigma \relbar\joinrel\relbar\joinrel\relbar\joinrel\relbar\joinrel\longrightarrow \overline{\bB''} \relbar\joinrel\relbar\joinrel\relbar\joinrel\relbar\joinrel\longrightarrow \boldsymbol{1}.\end{gather*}
Moreover, setting
 $ \overline{\bO''}(A) := \big\langle \{ \overline{x_\beta(\eta)} \}_{\beta \in \Delta_\one}^{\eta \in A_\one} \big\rangle $
 and
 $ \overline{\bT''}(A) := \big\langle \{ \overline{h'_{2\vep_i}(a)} \}_{i \in \{1,2,3\} }^{a \in A_\zero} \big\rangle $
 inside $ \overline{\bB''}(A)\!$ for all $ A \in \Wsalg $ we overall find two subsupergroups $ \overline{\bO''} $ and $ \overline{\bT''} $ of $ \overline{\bB''} $ such that $ \overline{\bO''} $ is normal Abelian, isomorphic to $ \mathbb{A}_{\scriptscriptstyle C}^{0|8} $~-- the $($totally odd$)$ complex affine Abelian supergroup of superdimension $ (0|8) $~--
and $ \overline{\bT''} $ is Abelian, isomorphic to $ \mathbb{T}_{\scriptscriptstyle \C}^{ 3} $~-- the $($totally even$)$ $ 3 $-dimensional complex torus~-- with $ \overline{\bB''} \cong \overline{\bT''} \ltimes \overline{\bO''} $~-- a semidirect product of Lie supergroups. In other words, there exists a second, {\it split} short exact sequence
\begin{gather*} \boldsymbol{1} \relbar\joinrel\relbar\joinrel\relbar\joinrel\relbar\joinrel\longrightarrow \overline{\bO''} \cong \mathbb{A}_{\scriptscriptstyle C}^{0|8} \relbar\joinrel\relbar\joinrel\relbar\joinrel\relbar\joinrel\longrightarrow \overline{\bB''} {\buildrel {{\displaystyle \dashleftarrow \hskip-4pt \text{$-$} \text{$-$} \text{$-$}}} \over {\relbar\joinrel\relbar\joinrel\relbar\joinrel\relbar\joinrel\longrightarrow}} \mathbb{T}_{\scriptscriptstyle \C}^{ 3} \cong \overline{\bT''} \relbar\joinrel\relbar\joinrel\relbar\joinrel\relbar\joinrel\longrightarrow \boldsymbol{1}.\end{gather*}
\end{enumerate}
\end{thm}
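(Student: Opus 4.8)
The plan is to run the same two-track argument already used for Theorems~\ref{thm_G-spec} and~\ref{thm_G'-spec}: on the one hand reading the structural statements directly off the explicit presentation of $\bG''_\bsigma(A)$ in Section~\ref{pres-G''_s}, and on the other hand transporting the Lie-superalgebra results of Theorem~\ref{thm_g''-spec} through the category equivalence $\Phi \colon (\cat{Lsgrp}) \longrightarrow \sHCp$, $\Psi \colon \sHCp \longrightarrow (\cat{Lsgrp})$ of Section~\ref{sHCp's->Liesgrp's}, using that by construction $(\bG''_\bsigma)_{\rm rd} = G''$ and $\operatorname{Lie}(\bG''_\bsigma) = \fgss = \fg_\bsigma$. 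The leverage is that $\Psi$, being an equivalence that preserves products and hence group objects, carries normal sub-super-Harish-Chandra pairs, their quotients and their semidirect-product splittings into normal Lie sub-supergroups, quotient supergroups and semidirect-product splittings; so each claim will be first established at the level of the pair $\cP''_\bsigma = (G'',\fgss)$ and then pushed to $\bG''_\bsigma$. For part~(1) this is immediate: $\bG''_\bsigma$ is connected and $\operatorname{Lie}(\bG''_\bsigma) = \fg_\bsigma$ is simple for $\bsigma \in V^\times$ (Proposition~\ref{prop: properties-g_sigma}, or Theorem~\ref{thm_g''-spec}(1)), so any nontrivial normal closed connected Lie sub-supergroup would yield a proper nonzero ideal of $\fg_\bsigma$, which is impossible.

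For part~(2), with $\sigma_i = 0$, I would first read from the relations of Section~\ref{pres-G''_s} that $x_{2\vep_i}(a_+)$ and $x_{-2\vep_i}(a_-)$ commute (the bracket $[X_{2\vep_i},X_{-2\vep_i}] = \sigma_i^{2} H'_{2\vep_i}$ vanishes) and are normalized by every other generator; this is the group-level shadow of the fact, proved in Theorem~\ref{thm_g''-spec}(2), that $\fc''_i = \C X_{2\vep_i}\oplus\C X_{-2\vep_i}$ is an Abelian ideal. Hence $\bK''_i$ is an Abelian normal Lie sub-supergroup and the quotient $\overline{\bB''_i}$ exists, giving the first short exact sequence. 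To obtain the split sequence I would apply $\Psi$ to the split sequence $0 \to \mathfrak{psl}(2|2)\cong\overline{\fd''_i}\to\overline{\fb''_i}\to\C\,\overline{H'}_{2\vep_i}\to 0$ of Theorem~\ref{thm_g''-spec}(2): the one-dimensional toral summand integrates to $\overline{\bH''_i}\cong\C^\times$, namely the torus inside the solvable factor $A''_i = S$ that survives the quotient by the unipotent directions generating $\bK''_i$, while $\overline{\fd''_i}\cong\mathfrak{psl}(2|2)$, together with its $(\text{SL}_2\times\text{SL}_2)$-type reduced even part, integrates to $\overline{\bD''_i}\cong\mathbb{P}\text{\rm SL}(2|2)$, yielding $\overline{\bB''_i}\cong\overline{\bH''_i}\ltimes\overline{\bD''_i}$.

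Part~(3) is the all-zero specialization and follows the same route: the $x_{\pm2\vep_i}(a)$ for all $i$ generate the Abelian normal $\bK''=\times_i\bK''_i$, and in $\overline{\bB''}$ the relation $\sigma_1 H'_{2\vep_1}+\sigma_2 H'_{2\vep_2}+\sigma_3 H'_{2\vep_3}=2H'_\theta$ collapses to $H'_\theta=0$, so that every bracket among the surviving odd generators $x_{\pm\beta_i},x_{\pm\theta}$ vanishes; hence $\overline{\bO''}$ is totally odd Abelian, $\cong\mathbb{A}_{\scriptscriptstyle\C}^{0|8}$ and normal, while the $\overline{h'_{2\vep_i}(a)}$'s generate the $3$-torus $\overline{\bT''}\cong\mathbb{T}_{\scriptscriptstyle\C}^{3}$, and the semidirect splitting $\overline{\bB''}\cong\overline{\bT''}\ltimes\overline{\bO''}$ is the integrated form of the decomposition of $\overline{\fb''}$ in Theorem~\ref{thm_g''-spec}(3).

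The main obstacle I anticipate is not the infinitesimal bookkeeping, which is inherited from Theorem~\ref{thm_g''-spec}, but rather the global group-theoretic claims at every $A\in\Wsalg$: one must check that the subfunctors defined by the listed generators are honest closed Lie sub-supergroups, that normality holds at all $A$-points (not merely at the Lie-algebra level), and above all that the even reduced groups come out as asserted. The last point is where the solvable factor $A''_i = S$, present exactly when $\sigma_i=0$, must be analyzed directly from the construction in Section~\ref{subsect: LieSGroups G''_s}: its decomposition as $\C^\times\ltimes(\C\times\C)$ into a torus and a two-dimensional unipotent Abelian part is what simultaneously produces $\overline{\bH''_i}\cong\C^\times$ and the even part of $\bK''_i$ — data invisible to $\fgss$ alone, and hence requiring the explicit relations of Section~\ref{pres-G''_s} together with the known global structure of $G''$.
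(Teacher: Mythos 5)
Your proposal is correct and follows essentially the same route as the paper: the paper's own proof of Theorem~\ref{thm_G''-spec} consists precisely of the two tracks you describe -- reading the claims off the presentation of $ \bG''_\bsigma(A) $ in Section~\ref{pres-G''_s}, or transporting Theorem~\ref{thm_g''-spec} through $ \operatorname{Lie}\big(\bG''_\bsigma\big) = \fgss $ and the equivalence of Section~\ref{sHCp's->Liesgrp's} -- and your extra attention to the solvable factor $ A''_i = S \cong \C^\times \ltimes (\C \times \C) $ when $ \sigma_i = 0 $ is exactly the global datum the paper leaves implicit. One small slip to fix: the chain $ \operatorname{Lie}\big(\bG''_\bsigma\big) = \fgss = \fg_\bsigma $ in your setup holds only for $ \bsigma \in V^\times $ (for singular $ \bsigma $ the last equality fails, which is the whole point of the construction), but since you invoke that identification only in part~(1) it does not affect the argument.
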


\begin{proof} Like for Theorem \ref{thm_G-spec}, one can deduce the claim from the presentation of $ \bG''_\bsigma $ in Section~\ref{pres-G''_s}, or also from the relation $ \operatorname{Lie}\big( \bG''_\bsigma \big) = \fg''(\bsigma) $ along with Theorem \ref{thm_g''-spec}.
\end{proof}

\subsection[Lie supergroups from contractions: the family of the $ \widehat{\bG}_\bsigma $'s]{Lie supergroups from contractions: the family of the $\boldsymbol{\widehat{\bG}_\bsigma}$'s} \label{subsect: LieSGroups hatG_s}

 Given $ \bsigma = (\sigma_1,\sigma_2,\sigma_3) \in V $, following Section~\ref{contractions} we fix the element $ \tau := x_{1 } x_{2 } x_{3 } \in \C[\bx] $ and the ideal $ I = I_\bsigma := ( \{ x_i - \sigma_i \}_{i=1,2,3} ) $, and we consider the corresponding complex Lie algebra $ \widehat{\fg}(\bsigma) $, with $ {\widehat{\fg}(\bsigma)}_\zero $ and $ {\widehat{\fg}(\bsigma)}_\one $ as its even and odd part, respectively. With a slight abuse of notation, for any element $ Z \in \widehat{\fg}(\bx) $ we denote again by $ Z $ its corresponding coset in $ \widehat{\fg}(\bsigma) = \C[\bsigma] \otimes_{{}_{\C[\bx]}} \widehat{\fg}(\bx) \cong \widehat{\fg}(\bx) / I_\bsigma \widehat{\fg}(\bx) $ (see Section~\ref{contractions} for notation). By construction, $ \widehat{\fg}(\bsigma) $ admits as $ \C $-basis the set
\begin{gather*} \widehat{B} := \{ X_\alpha, H_{2\varepsilon_i} \,|\, \alpha \in \Delta, i \in \{1,2,3\} \} \cup \big\{ \widehat{X}_\beta := \tau X_\beta \,|\, \beta \in \Delta_\one \big\}.
\end{gather*}

 We consider also $ \widehat{\fa}_i := \fa_i $ ($ := \C X_{2\vep_i} \oplus \C H_{2\vep_i} \oplus \C X_{-2\vep_i} $) for all $ 1 = 1, 2, 3 $, that all are Lie subalgebras of $ \widehat{\fg}_\zero $, with $ \widehat{\fa}_i \cong \fsl_i(2) $ when $ \sigma_i \not= 0 $ and $ \fa_i \cong \C^{\oplus 3} $~-- the 3-dimensional Abelian Lie algebra~-- if $ \sigma_i = 0 $ (see also Section~\ref{subsect: LieSGroups G_s}).

 Recalling the construction of $ \bG_\bsigma $ in Section~\ref{subsect: LieSGroups G_s}, for each $ i \in \{1,2,3\} $ we set $ \widehat{A}_i := A_i $ (isomorphic to either $ \text{SL}_2 $ or $ \C \times \C^\times \times \C $ depending on $ \sigma_i \not=0 $ or $ \sigma_i = 0 $) and $ \widehat{G} := \times_{i=1}^3 \widehat{A}_i = G $, a complex Lie group such that $ \operatorname{Lie}\big( \widehat{G} \big) = {\widehat{\fg}(\bsigma)}_\zero $. Like in Section~\ref{subsect: LieSGroups G_s}, the adjoint action of $ {\widehat{\fg}(\bsigma)}_\zero $ onto $ \widehat{\fg}(\bsigma) $ integrates to a Lie group action of $ \widehat{G} $ onto $ \widehat{\fg}(\bsigma) $: endowed with this action, the pair $ \widehat{\cP}_{\bsigma} := \big( \widehat{G}, \widehat{\fg}(\bsigma) \big) $ is a super Harish-Chandra pair (cf.\ Section~\ref{sHCp's}). Eventually, we can define
\begin{gather*} \widehat{\bG}_\bsigma := \bG_{{}_{\widehat{\cP}_{{}_{\bsigma}}}}\end{gather*}
 to be the complex Lie supergroup associated with $ \widehat{\cP}_{\bsigma} $ following Section~\ref{sHCp's->Liesgrp's}.

\begin{free text} \label{pres-Ghat_s} {\bf A presentation of $\boldsymbol{\widehat{\bG}_\bsigma}$.} To describe the supergroups $ \widehat{\bG}_\bsigma $, we provide hereafter an explicit presentation by generators and relations of all the abstract groups $ \widehat{\bG}_\bsigma(A) $, with $ A \in \Wsalg $. To begin with, let $ \exp\colon \widehat{\fg}_\zero \cong \operatorname{Lie} (\widehat{G} ) \relbar\joinrel\relbar\joinrel\relbar\joinrel\longrightarrow \widehat{G} $ be the exponential map. Like we did in Section~\ref{pres-G_s} for the supergroup $ \bG_\bsigma $, inside each subgroup $ A_i $ we consider
\begin{gather*} x_{2\varepsilon_i}(c) := \exp (c X_{2\varepsilon_i} ), \qquad
 h_{2\varepsilon_i}(c) := \exp (c H_{2\varepsilon_i} ), \qquad
 x_{-2\varepsilon_i}(c) := \exp (c X_{-2\varepsilon_i} ) \end{gather*}
for every $ c \in \C $; then $ \widehat{\varGamma}_i := \{ x_{2\varepsilon_i}(c), h_{2\varepsilon_i}(c), x_{-2\varepsilon_i}(c) \}_{ c \in \C} $ is a generating set for $ \widehat{A}_i $; also, we consider elements $ h_\theta(c) := \exp(c H_\theta) $ for all $ c \in \C $. It follows that the complex Lie group $ \widehat{G} = \widehat{A}_1 \times \widehat{A}_2 \times \widehat{A}_3 $ is generated by
\begin{gather*} \widehat{\varGamma}_\zero := \{ x_{2\varepsilon_i}(c), h_{2\varepsilon_i}(c), h_\theta(c), x_{-2\varepsilon_i}(c) \}^{i \in \{1,2,3\}}_{c \in \C}\end{gather*}
(we could drop the $ h_\theta(c) $'s, but we prefer to keep them among the generators).

 When we think of $ \widehat{G} $ as a (totally even) supergroup, looking at it as a group-valued functor $ \widehat{G}\colon \Wsalg \longrightarrow \text{\rm ($ \cat{grps} $)} $, {\it the abstract group $ \widehat{G}(A) $ of its $ A $-points~-- for $ A \in \Wsalg $~-- is generated by the set}
\begin{gather} \label{eq: gen-set Ghat_+}
 \widehat{\varGamma}_\zero(A) := \{ x_{2\varepsilon_i}(a), h_{2\varepsilon_i}(a), h_\theta(a), x_{-2\varepsilon_i}(a) \}^{i \in \{1,2,3\}}_{a \in A_\zero}.
\end{gather}

In fact, we would better stress that, by construction (cf.\ Section~\ref{subsect: LieSGroups G_s}), we have an obvious isomorphism $ \widehat{G} \cong G $ (see Section~\ref{pres-G_s} for the definition of $ G $) as complex Lie groups.

 To generate the group $ \widehat{\bG}_\bsigma(A) := \bG_{{}_{\widehat{\cP}_{{}_{\bsigma}}}}(A) $ applying the recipe in Section~\ref{sHCp's->Liesgrp's}, we fix in $ {\widehat{\fg}(\bsigma)}_\one $ the $ \C $-basis $\{ Y_i \}_{i \in I} = \big\{ \widehat{X}_\beta := \tau X_\beta \,|\, \beta \in \Delta_\one = \{ \pm \theta, \pm \beta_1, \pm \beta_2, \pm \beta_3 \} \big\} $. Thus, besides the generating elements from $ \widehat{G}(A) $, we take as generators also those of the set
\begin{gather*} \widehat{\varGamma}_\one(A) := \big\{ \widehat{x}_{\pm\theta}(\eta) := \big( 1 + \eta \widehat{X}_{\pm \theta} \big)
, \widehat{x}_{\pm\beta_i}(\eta) := \big( 1 + \eta \widehat{X}_{\pm \beta_i} \big) \big\}_{\eta \in A_\one}^{i \in \{1,2,3\}}.
 \end{gather*}
 Taking into account that $ \widehat{G}(A) $ is generated by $ \widehat{\varGamma}_\zero(A) $, we can modify the set of relations in Section~\ref{sHCp's->Liesgrp's} by letting $ g_+ \in \widehat{G}(A) $ range inside the set $ \widehat{\varGamma}_\zero(A) $: then we can find the following {\it full set of relations} (freely using notation $ e^Z := \exp(Z) $):
\begin{gather*}
 1_{{}_{\widehat{G}}} = 1, \qquad g' \cdot g'' = g' \cdot_{{}_{\widehat{G}}} g'', \qquad \forall\, g', g'' \in \widehat{G}(A),\\
 h_{2\vep_i}(a) \widehat{x}_{\pm\beta_j}(\eta) {h_{2\vep_i}(a)}^{-1} = \widehat{x}_{\pm\beta_j}\big( e^{\pm {(-1)}^{-\delta_{i,j}} \sigma_i a} \eta\big),\\
 h_{2\vep_i}(a) \widehat{x}_{\pm\theta}(\eta) {h_{2\vep_i}(a)}^{-1} = \widehat{x}_{\pm\theta}\big( e^{\pm \sigma_i a} \eta\big),\\
 h_\theta(a) \widehat{x}_{\pm\beta_i}(\eta) {h_\theta(a)}^{-1} = \widehat{x}_{\pm\beta_i}\big( e^{\mp \sigma_i a} \eta\big), \qquad h_\theta(a) \widehat{x}_{\pm\theta}(\eta) {h_\theta(a)}^{-1} = \widehat{x}_{\pm\theta}(\eta),\\
 x_{2\vep_i}(a) \widehat{x}_{\beta_j}(\eta) {x_{2\vep_i}(a)}^{-1} = \widehat{x}_{\beta_j}(\eta) \widehat{x}_{\theta}( \delta_{i,j} \sigma_i a \eta ),\\
 x_{2\vep_i}(a) \widehat{x}_{-\beta_j}(\eta) {x_{2\vep_i}(a)}^{-1} = \widehat{x}_{-\beta_j}(\eta) \widehat{x}_{\beta_k}( (1-\delta_{i,j}) \sigma_i a \eta ),\\
 x_{-2\vep_i}(a) \widehat{x}_{\beta_j}(\eta) {x_{-2\vep_i}(a)}^{-1} = \widehat{x}_{\beta_j}(\eta) \widehat{x}_{-\beta_k}( (1-\delta_{i,j}) \sigma_i a \eta ),\\
 x_{-2\vep_i}(a) \widehat{x}_{-\beta_j}(\eta) {x_{-2\vep_i}(a)}^{-1} = \widehat{x}_{-\beta_j}(\eta) \widehat{x}_{-\theta}( \delta_{i,j} \sigma_i a \eta ),\\
 x_{2\vep_i}(a) \widehat{x}_{\theta}(\eta) {x_{2\vep_i}(a)}^{-1} = \widehat{x}_{\theta}(\eta),\\
 x_{2\vep_i}(a) \widehat{x}_{-\theta}(\eta) {x_{2\vep_i}(a)}^{-1} = \widehat{x}_{-\theta}(\eta) \widehat{x}_{-\beta_i}( \sigma_i a \eta ),\\
 x_{-2\vep_i}(a) \widehat{x}_{\theta}(\eta) {x_{-2\vep_i}(a)}^{-1} = \widehat{x}_{\theta}(\eta) \widehat{x}_{\beta_i}( \sigma_i a \eta ),\\
 x_{-2\vep_i}(a) \widehat{x}_{-\theta}(\eta) {x_{-2\vep_i}(a)}^{-1} = \widehat{x}_{-\theta}(\eta),\\
 \widehat{x}_{\beta_i}(\eta_i) \widehat{x}_{\beta_j}(\eta'_j) = x_{2\vep_k}\big( (1-\delta_{i,j}) \tau_\bsigma^2 \eta'_j \eta_i \big) \widehat{x}_{\beta_j}(\eta'_j) \widehat{x}_{\beta_i}(\eta_i),\\
 \widehat{x}_{-\beta_i}(\eta_i) \widehat{x}_{-\beta_j}(\eta'_j) = x_{-2\vep_k}\big( {-}(1-\delta_{i,j}) \tau_\bsigma^2 \eta'_j \eta_i \big) \widehat{x}_{-\beta_j}(\eta'_j) \widehat{x}_{-\beta_i}(\eta_i),\\
 \widehat{x}_{\beta_i}(\eta_i) \widehat{x}_{-\beta_j}(\eta'_j) =
 h_{2\vep_i}\big( \delta_{i,j} \tau_\bsigma^2 \eta'_j \eta_i \big) h_{\theta}\big( {-} \delta_{i,j} \tau_\bsigma^2 \eta'_j \eta_i \big) \widehat{x}_{-\beta_j}(\eta'_j) \widehat{x}_{\beta_i}(\eta_i),\\
 \widehat{x}_{\beta_i}(\eta_i) \widehat{x}_{\theta}(\eta) = \widehat{x}_{\theta}(\eta) \widehat{x}_{\beta_i}(\eta_i), \qquad \widehat{x}_{\beta_i}(\eta_i) \widehat{x}_{-\theta}(\eta) = x_{-2\vep_i}\big( \tau_\bsigma^2 \eta \eta_i \big) \widehat{x}_{-\theta}(\eta) \widehat{x}_{\beta_i}(\eta_i),\\
 \widehat{x}_{-\beta_i}(\eta_i) \widehat{x}_{\theta}(\eta) = x_{2\vep_i}\big( {-} \tau_\bsigma^2 \eta \eta_i \big) \widehat{x}_{\theta}(\eta) \widehat{x}_{-\beta_i}(\eta_i), \qquad \widehat{x}_{-\beta_i}(\eta_i) \widehat{x}_{-\theta}(\eta) = \widehat{x}_{-\theta}(\eta) \widehat{x}_{-\beta_i}(\eta_i),\\
 \widehat{x}_{\theta}(\eta_+) \widehat{x}_{-\theta}(\eta_-) = h_\theta\big( \tau_\bsigma^2 \eta_- \eta_+ \big) \widehat{x}_{-\theta}(\eta_-) \widehat{x}_{\theta}(\eta_+),\\
 \widehat{x}_{\pm\beta_i}(\eta') \widehat{x}_{\pm\beta_i}(\eta'') = \widehat{x}_{\pm\beta_i}(\eta'+\eta''), \qquad \widehat{x}_{\pm\theta}(\eta') \widehat{x}_{\pm\theta}(\eta'') = \widehat{x}_{\pm\theta}(\eta'+\eta'')
\end{gather*}
with $ \{i,j,k\} \in \{1,2,3\} $.
\end{free text}

\begin{free text} \label{sing-special-Ghat_s} {\bf Singular specializations of the supergroup(s) $\boldsymbol{\widehat{\bG}_\bsigma}$.} The very construction of the supergroups $ \widehat{\bG}_\bsigma $ implies that
\begin{gather*}
\text{\it $ \widehat{\bG}_\bsigma $ is simple $($as a Lie supergroup$)$ for all $ \bsigma = (\sigma_1,\sigma_2,\sigma_3) \in V^\times $.}
\end{gather*}
 This also follows from the presentation of $ \widehat{\bG}_\bsigma $ in Section~\ref{pres-Ghat_s} above, or as a direct consequence of the relation $ \operatorname{Lie}\big(\widehat{\bG}_\bsigma\big) = \widehat{\fg}(\bsigma) = \widehat{\fg}_\bsigma $ and of the fact that $ \widehat{\fg}_\bsigma \cong \fg_\bsigma $ when $ \sigma_i \not= 0 $ for all $ i $.
\end{free text}

Things change instead at ``singular values'' of $ \bsigma $. The complete result is the following:

\begin{thm} \label{thm_Ghat-spec} Given $ \bsigma \in V $, keep notation as above.
\begin{enumerate}\itemsep=0pt
\item[$(1)$] If $ \bsigma \in V^\times $, then the Lie supergroup $ \widehat{\bG}_\bsigma $ is simple.
\item[$(2)$] If $ \bsigma \in V \setminus V^\times $, with $ \sigma_i = 0 $ and $ \sigma_j \not= 0 \not= \sigma_k $ for $ \{i,j,k\} = \{1,2,3\} $, then
 $ \widehat{A}_i \trianglelefteq \widehat{\bG}_\bsigma $ is a {\it central} Lie subsupergroup of $ \widehat{\bG}_\bsigma $, with $ \widehat{A}_i \cong \C \times \C^\times \times \C $, while $ \widehat{A}_j \cong \widehat{A}_k \cong \text{\rm SL}(2) $ for $ \{j,k\} = \{1,2,3\} \setminus \{i\} $. Also, we have a semidirect product splitting
\begin{gather*} \widehat{\bG}(\bsigma) \cong {\widehat{\bG}(\bsigma)}_{\rm rd} \ltimes {\widehat{\bG}(\bsigma)}_\one\end{gather*}
 with $ {\widehat{\bG}(\bsigma)}_{\rm rd} = \mathop{\times}\limits_{\ell=1}^{ 3} \widehat{A}_\ell \cong ( \C \times \C^\times \times \C ) \times \text{\rm SL}(2) \times \text{\rm SL}(2) $ while $ {\widehat{\bG}(\bsigma)}_\one $ is the subsupergroup of~$ \widehat{\bG}_\bsigma $ generated by the $ \widehat{x}_{\pm \theta} $'s and the $ \widehat{x}_{\pm \beta_i} $'s $($for all $ i )$, which is normal Abelian, isomorphic to $ \mathbb{A}_{\scriptscriptstyle C}^{0|8} $~-- the $($totally odd$)$ complex affine Abelian supergroup of superdimension $ (0|8) $~-- and such that
 $ {\widehat{\bG}(\bsigma)}_\one \cong ( {\raise1,3pt\hbox{$ \sqbullet $}} {\raise1pt\hbox{$ \hskip2pt \oplus \hskip1pt $}} {\raise1,3pt\hbox{$ \sqbullet $}} ) \boxtimes \Box \boxtimes \Box $~-- where $ {\raise1,3pt\hbox{$ \sqbullet $}} $ is the trivial representation~-- as a module over $ {\widehat{\bG}(\bsigma)}_{\rm rd} \cong \big( \C \times \C^\times \times \C \big) \times \text{\rm SL}(2) \times \text{\rm SL}(2) $. In other words, there exists a {\it split} short exact sequence of Lie supergroups
\begin{gather*}\boldsymbol{1} \relbar\joinrel\longrightarrow ( {\raise1,3pt\hbox{$ \sqbullet $}} {\raise1pt\hbox{$ \hskip2pt \oplus \hskip1pt $}} {\raise1,3pt\hbox{$ \sqbullet $}} ) \boxtimes \Box \boxtimes \Box \cong {\widehat{\bG}(\bsigma)}_\one \relbar\joinrel\longrightarrow \widehat{\bG}(\bsigma) {\buildrel
{{\displaystyle \dashleftarrow \hskip-4pt \text{$-$}}} \over {\relbar\joinrel\longrightarrow}} {\widehat{\bG}(\bsigma)}_{\rm rd} \\
\qquad{} \cong \big( \C \times \C^\times \times \C \big) \times {\text{\rm SL}(2)}^2 \relbar\joinrel\longrightarrow \boldsymbol{1}.
\end{gather*}
\item[$(3)$] If $ \bsigma = \boldsymbol{0}$ $(\!\in V \setminus V^\times ) $, i.e., $ \sigma_h = 0 $ for all $ h \in \{1,2,3\} $, then $ \widehat{\bG}_\bsigma $ is the Abelian Lie supergroup
 $ \widehat{\bG}_\bsigma \cong ( \C \times \C^\times \times \C )^3 \times ( {\raise1,2pt\hbox{$ \sqbullet $}} {\raise1pt\hbox{$ \hskip2pt \oplus \hskip1pt $}} {\raise1,2pt\hbox{$ \sqbullet $}} )^{\boxtimes 3} $.
 \end{enumerate}
\end{thm}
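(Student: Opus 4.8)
The plan is to obtain all three parts from the corresponding Lie-superalgebra statement, Theorem~\ref{thm_g_eta-spec}, transported to the group level through the category equivalence $\Phi$ with quasi-inverse $\Psi$ between Lie supergroups and super Harish-Chandra pairs (Sections~\ref{Liesgrp's->sHCp's} and~\ref{sHCp's->Liesgrp's}), exactly in the spirit of the proofs of Theorems~\ref{thm_G-spec}, \ref{thm_G'-spec} and~\ref{thm_G''-spec}. By construction $\widehat{\bG}_\bsigma = \Psi(\widehat{\cP}_\bsigma)$ with $\widehat{\cP}_\bsigma = (\widehat{G},\widehat{\fg}(\bsigma))$ and $\operatorname{Lie}(\widehat{\bG}_\bsigma) = \widehat{\fg}(\bsigma)$, while $\widehat{\bG}(\bsigma)_{\rm rd} = \widehat{G} = \times_{\ell=1}^3 \widehat{A}_\ell$. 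Since $\Psi$ is an equivalence preserving products and group objects, it carries ideals to normal subsupergroups, central ideals to central subsupergroups, and (split) short exact sequences of sHCp's to (split) short exact sequences of Lie supergroups; hence the decompositions of $\widehat{\fg}(\bsigma)$ recorded in Theorem~\ref{thm_g_eta-spec} lift to $\widehat{\bG}_\bsigma$. In particular, for part~(1) one has $\widehat{\fg}(\bsigma) \cong \fg_\bsigma$ simple for $\bsigma \in V^\times$ (Theorem~\ref{thm_g_eta-spec}(1) and Proposition~\ref{prop: properties-g_sigma}): a nontrivial normal closed connected subsupergroup of $\widehat{\bG}_\bsigma$ would yield, via $\Phi$, a proper nonzero ideal of $\widehat{\fg}(\bsigma)$, which is impossible, so $\widehat{\bG}_\bsigma$ is simple.

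For the two singular cases I prefer the concrete route, reading everything off the presentation of $\widehat{\bG}_\bsigma(A)$ in Section~\ref{pres-Ghat_s}. The single fact driving both parts is that when some $\sigma_i = 0$ the specialized coefficient $\tau_\bsigma = \sigma_1\sigma_2\sigma_3$ vanishes, so $\tau_\bsigma^2 = 0$. Every odd--odd relation in Section~\ref{pres-Ghat_s} carries a factor $\tau_\bsigma^2$ on its right-hand side — as in $\widehat{x}_{\beta_i}(\eta_i)\widehat{x}_{\beta_j}(\eta_j') = x_{2\vep_k}\big((1-\delta_{i,j})\tau_\bsigma^2\eta_j'\eta_i\big)\widehat{x}_{\beta_j}(\eta_j')\widehat{x}_{\beta_i}(\eta_i)$, and likewise for the $\widehat{x}_{\beta_i}\widehat{x}_{-\beta_j}$, $\widehat{x}_{\pm\beta_i}\widehat{x}_{\mp\theta}$ and $\widehat{x}_\theta\widehat{x}_{-\theta}$ products — so each collapses to a plain commutation once $\tau_\bsigma^2 = 0$. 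Thus the subsupergroup $\widehat{\bG}(\bsigma)_\one$ generated by $\widehat{\varGamma}_\one(A)$ is totally odd and Abelian, hence isomorphic to $\mathbb{A}_\C^{0|8}$; and as $\widehat{\bG}_\bsigma(A)$ is generated by $\widehat{G}(A)$ together with $\widehat{\varGamma}_\one(A)$, with $\widehat{G}(A)$ normalizing the odd part, we obtain the splitting $\widehat{\bG}(\bsigma) \cong \widehat{\bG}(\bsigma)_{\rm rd} \ltimes \widehat{\bG}(\bsigma)_\one$.

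For part~(2), with $\sigma_i = 0$ and $\sigma_j,\sigma_k \neq 0$, the even factors are $\widehat{A}_i \cong \C \times \C^\times \times \C$ and $\widehat{A}_j \cong \widehat{A}_k \cong \mathrm{SL}(2)$ by the construction of Section~\ref{subsect: LieSGroups hatG_s}. Centrality of $\widehat{A}_i$ follows because every conjugation relation of its generators with odd generators — for instance $h_{2\vep_i}(a)\widehat{x}_{\pm\beta_j}(\eta)h_{2\vep_i}(a)^{-1} = \widehat{x}_{\pm\beta_j}\big(e^{\pm(-1)^{-\delta_{i,j}}\sigma_i a}\eta\big)$ and $x_{2\vep_i}(a)\widehat{x}_{-\beta_j}(\eta)x_{2\vep_i}(a)^{-1} = \widehat{x}_{-\beta_j}(\eta)\widehat{x}_{\beta_k}\big((1-\delta_{i,j})\sigma_i a\eta\big)$ — carries a factor $\sigma_i$ that kills it when $\sigma_i = 0$, while $\widehat{A}_i$ commutes with $\widehat{A}_j,\widehat{A}_k$ as a direct factor and is itself Abelian. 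Reading the same relations for the indices $j,k$ shows that $\widehat{A}_j$ and $\widehat{A}_k$ act on $\widehat{\bG}(\bsigma)_\one$ through their standard two-dimensional representations $\Box$, whereas $\widehat{A}_i$ acts trivially on all eight odd generators; this is exactly the module $(\text{two-dimensional trivial})\boxtimes\Box\boxtimes\Box$ asserted, giving the split short exact sequence. For part~(3) every $\sigma_h$ vanishes, so again $\tau_\bsigma = 0$; now all even--odd conjugation relations are trivial and the odd part is Abelian, whence the two halves commute and $\widehat{\bG}_{\boldsymbol{0}} \cong (\C \times \C^\times \times \C)^3 \times (\text{two-dimensional trivial})^{\boxtimes 3}$, each $\widehat{A}_h \cong \C \times \C^\times \times \C$.

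The routine work is the specialization of the relations; the one point needing care is identifying the odd part not merely as an abstract Abelian group but as the supergroup $\mathbb{A}_\C^{0|8}$ equipped with the precise $\widehat{G}$-module structure. I will secure this by checking that the exponentiated adjoint $\widehat{A}_\ell$-action on $\widehat{\fg}(\bsigma)_\one$ built into the sHCp $\widehat{\cP}_\bsigma$ in Section~\ref{subsect: LieSGroups hatG_s} matches the functorial action used by $\Psi$ in the relations of Section~\ref{pres-Ghat_s}; once this compatibility is in place, the abstract transfer through $\Psi$ and the hands-on reading of the presentation agree and pin down the stated representations. Everything else is bookkeeping, which I omit.
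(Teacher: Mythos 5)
Your proposal is correct and takes essentially the same approach as the paper: the paper's proof is a two-line remark that the claim follows either from the presentation in Section~\ref{pres-Ghat_s} or from $\operatorname{Lie}\big(\widehat{\bG}_\bsigma\big)=\widehat{\fg}(\bsigma)$ together with Theorem~\ref{thm_g_eta-spec}. You have simply fleshed out both of these routes~-- the sHCp/Lie-functor transfer for part~(1), and the vanishing of the $\sigma_i$ and $\tau_\bsigma^2$ coefficients in the presentation's relations for parts~(2) and~(3)~-- which is exactly the intended argument.
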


\begin{proof} As for the parallel results for $ \bG_\bsigma $, $ \bG'_\bsigma $ and $ \bG''_\bsigma $, we can deduce the claim from the presentation of $ \widehat{\bG}_\bsigma $ in Section~\ref{pres-Ghat_s}, or from the link $ \operatorname{Lie} ( \widehat{\bG}_\bsigma) = \widehat{\fg}(\bsigma) $ along with Theo\-rem~\ref{thm_g_eta-spec}.
\end{proof}

\subsection[Lie supergroups from contractions: the family of the $ \widehat{\bG}^{ \prime}_\bsigma $'s]{Lie supergroups from contractions: the family of the $\boldsymbol{\widehat{\bG}^{ \prime}_\bsigma}$'s} \label{subsect: LieSGroups Ghat'_s}

 Given $ \bsigma = (\sigma_1,\sigma_2,\sigma_3) \in V $, we follow again Section~\ref{contractions} and set $ \tau := x_{1 } x_{2 } x_{3 } \in \C[\bx] $ and $ I = I_\bsigma := ( \{ x_i - \sigma_i \}_{i=1,2,3}) $; but now we consider the corresponding complex Lie algebra $ \widehat{\fg}^{ \prime}(\bsigma) $, with $ {\widehat{\fg}^{ \prime}(\bsigma)}_\zero $ and $ {\widehat{\fg}^{ \prime}(\bsigma)}_\one $ as its even and odd part, respectively (and we still make use of some abuse of notation as in Section~\ref{subsect: LieSGroups hatG_s}). By construction, a $ \C $-basis of $ \widehat{\fg}^{ \prime}(\bsigma) $ is
\begin{gather*} \widehat{B}^{ \prime} := \{ X'_\alpha, H'_{2\varepsilon_i} \,|\, \alpha \in \Delta, i \in \{1,2,3\} \} \cup \big\{ \widehat{X}^{ \prime}_\beta := \tau X'_\beta \,|\, \beta \in \Delta_\one \big\}.
\end{gather*}
Consider also $ \widehat{\fa}^{ \prime}_i := \fa^{ \prime}_i $ ($ := \C X'_{2\vep_i} \oplus \C H'_{2\vep_i} \oplus \C X'_{-2\vep_i} $, cf.\ Section~\ref{subsect-g'(sigma)}) for all $ 1 = 1, 2, 3 $: all these are Lie subalgebras in $ \widehat{\fg}^{ \prime}(\bsigma) $, isomorphic to $ \fsl(2) $, and $ {\widehat{\fg}^{ \prime}(\bsigma)}_\zero = \oplus_{i=1}^3 \widehat{\fa}^{ \prime}_i $.
 The {\it faithful} adjoint action of $ {\widehat{\fg}^{ \prime}(\bsigma)}_\zero $ onto $ \widehat{\fg}^{ \prime}(\bsigma) $ gives a Lie algebra monomorphism $ {\widehat{\fg}^{ \prime}(\bsigma)}_\zero \lhook\joinrel\longrightarrow \fgl ( \widehat{\fg}^{ \prime}(\bsigma) ) $, by which we identify $ {\widehat{\fg}^{ \prime}(\bsigma)}_\zero $ with its image in $ \fgl ( \widehat{\fg}^{ \prime}(\bsigma) ) $. Then $ \exp \colon \fgl\big( \widehat{\fg}^{ \prime}(\bsigma) \big) \longrightarrow \text{GL} ( \widehat{\fg}^{ \prime}(\bsigma) ) $ yields a~Lie subgroup $ \widehat{G}^{ \prime} := \exp ( {\widehat{\fg}^{ \prime}(\bsigma)}_\zero ) $ in $ \text{GL} ( \widehat{\fg}^{ \prime}(\bsigma) ) $ which faithfully acts onto $ \widehat{\fg}^{ \prime}(\bsigma) $ and is such that $ \operatorname{Lie} (\widehat{G}^{ \prime} ) = ( \widehat{\fg}^{ \prime}(\bsigma) )_\zero $; finally, the pair $ \widehat{\cP}^{ \prime}_{\bsigma} := ( \widehat{G}^{ \prime}, \widehat{\fg}^{ \prime}(\bsigma) ) $ with this action is then a super Harish-Chandra pair (cf.\ Section~\ref{sHCp's}).

 As alternative method, we might also construct the super Harish-Chandra pair $ \widehat{\cP}^{ \prime}_{\bsigma} $ via the same procedure, up to the obvious, minimal changes, adopted for $ \cP^{ \prime}_{\bsigma} $ in Section~\ref{subsect: LieSGroups G'_s}; indeed, one can also do the converse, namely use the present method to construct $ \cP^{ \prime}_{\bsigma} $ as well.

 Once we have the super Harish-Chandra pair $ \widehat{\cP}^{ \prime}_{\bsigma} $, it makes sense to define
\begin{gather*}\widehat{\bG}^{ \prime}_\bsigma := \bG_{{}_{\widehat{\cP}^{ \prime}_{{}_{\bsigma}}}} \end{gather*}
 that is the complex Lie supergroup associated with $ \widehat{\cP}^{ \prime}_{\bsigma} $ after the recipe in Section~\ref{sHCp's->Liesgrp's}.

\begin{free text} \label{pres-Ghat'_s}
{\bf A presentation of $\boldsymbol{\widehat{\bG}^{ \prime}_\bsigma}$.} We shall presently describe the supergroups $ \widehat{\bG}^{ \prime}_\bsigma $ by means of an explicit presentation by generators and relations of all the abstract groups $ \widehat{\bG}^{ \prime}_\bsigma(A) $, for all $ A \in \Wsalg $. To begin with, let $ \exp \colon \widehat{\fg}^{ \prime}_\zero \cong \operatorname{Lie} (\widehat{G}^{ \prime} ) \relbar\joinrel\relbar\joinrel\longrightarrow \widehat{G}^{ \prime} $ be the exponential map. Just like for the supergroup $ \bG_\bsigma $ in Section~\ref{pres-G_s}, inside each subgroup $ A'_i $ we consider
\begin{gather*} x'_{2\varepsilon_i}(c) := \exp (c X'_{2\varepsilon_i} ), \qquad
 h'_{2\varepsilon_i}(c) := \exp (c H'_{2\varepsilon_i} ), \qquad
 x'_{-2\varepsilon_i}(c) := \exp (c X'_{-2\varepsilon_i} ) \end{gather*}
for every $ c \in \C $; then $ \widehat{\varGamma}^{ \prime}_i := \{ x'_{2\varepsilon_i}(c), h'_{2\varepsilon_i}(c), x'_{-2\varepsilon_i}(c) \}_{ c \in \C} $ is a generating set for $ \widehat{A}^{ \prime}_i = A'_i $; also, we consider elements $ h'_\theta(c) := \exp\big(c H'_\theta\big) $ for all $ c \in \C $. It follows that the complex Lie group $ \widehat{G}^{ \prime} = \widehat{A}^{ \prime}_1 \times \widehat{A}^{ \prime}_2 \times \widehat{A}^{ \prime}_3 $ is generated by
\begin{gather*} \widehat{\varGamma}^{ \prime}_\zero := \{ x'_{2\varepsilon_i}(c), h'_{2\varepsilon_i}(c), h'_\theta(c), x'_{-2\varepsilon_i}(c) \}^{i \in \{1,2,3\}}_{c \in \C}
\end{gather*}
(as before, we could drop the $ h'_\theta(c) $'s, but we prefer to keep them among the generators).

 When thinking of $ \widehat{G}^{ \prime} $ as a (totally even) supergroup, considered as a group-valued functor $ \widehat{G}^{ \prime} \colon \Wsalg \longrightarrow \text{\rm ($ \cat{grps} $)} $, {\it the abstract group $ \widehat{G}^{ \prime}(A) $ of its $ A $-points~-- for $ A \in \Wsalg $~-- is gene\-ra\-ted by the set}
\begin{gather} \label{eq: gen-set Ghat'_+}
 \widehat{\varGamma}^{ \prime}_\zero(A) := \{ x'_{2\varepsilon_i}(a), h'_{2\varepsilon_i}(a), h_\theta(a), x'_{-2\varepsilon_i}(a) \}^{i \in \{1,2,3\}}_{a \in A_\zero}.
\end{gather}

Indeed, we can also stress that, by construction (cf.\ Section~\ref{subsect: LieSGroups G_s}), there exists an obvious isomorphism $ \widehat{G}^{ \prime} \cong G' $ as complex Lie groups.
 Now, to generate the group $ \widehat{\bG}^{ \prime}_\bsigma(A) := \bG_{{}_{\widehat{\cP}^{ \prime}_{{}_{\bsigma}}}}(A) $ following the recipe in Section~\ref{sHCp's->Liesgrp's}, we fix in $( {\widehat{\fg}^{ \prime}(\bsigma)}_{{}_\C} )_\one $ the $ \C $-basis $ \{ Y_i \}_{i \in I} = \big\{ \widehat{X}'_\beta := \tau X'_\beta \,|\, \beta \in \Delta_\one = \{ \pm \theta, \pm \beta_1, \pm \beta_2, \pm \beta_3 \} \big\} $. Then, beside the generating elements from $ \widehat{G}^{ \prime}(A) $ we take as generators also those of the set
\begin{gather*} \widehat{\varGamma}^{ \prime}_\one(A) := \big\{ \widehat{x}^{ \prime}_{\pm\theta}(\eta) :=
 \big( 1 + \eta \widehat{X}'_{\pm \theta} \big), \widehat{x}^{ \prime}_{\pm\beta_i}(\eta) := \big( 1 + \eta \widehat{X}'_{\pm \beta_i} \big) \big\}_{\eta \in A_\one}^{i \in \{1,2,3\}}.
 \end{gather*}
 Knowing that $ \widehat{G}^{ \prime}(A) $ is generated by $ \widehat{\varGamma}^{ \prime}_\zero(A) $, we can modify the set of relations in Section~\ref{sHCp's->Liesgrp's} by letting $ g \in \widehat{G}^{ \prime}(A) $ range inside $ \widehat{\varGamma}^{ \prime}_\zero(A) $; eventually, we can find the following {\it full set of relations} (freely using notation $ e^Z := \exp(Z) $):
\begin{gather*}
1_{{}_{\widehat{G}^{ \prime}}} = 1, \qquad g' \cdot g'' = g' \cdot_{{}_{\widehat{G}^{ \prime}}} g'', \qquad \forall\, g', g'' \in \widehat{G}^{ \prime}(A),\\
 h'_{2\vep_i}(a) \widehat{x}^{ \prime}_{\pm\beta_j}(\eta) {h'_{2\vep_i}(a)}^{-1} = \widehat{x}^{ \prime}_{\pm\beta_j}\big( e^{\pm {(-1)}^{-\delta_{i,j}} a} \eta \big),\\
 h'_{2\vep_i}(a) \widehat{x}^{ \prime}_{\pm\theta}(\eta) {h'_{2\vep_i}(a)}^{-1} = \widehat{x}^{ \prime}_{\pm\theta}\big( e^{\pm a} \eta \big),\\
 h'_\theta(a) \widehat{x}^{ \prime}_{\pm\beta_i}(\eta) {h'_\theta(a)}^{-1} = \widehat{x}^{ \prime}_{\pm\beta_i}\big( e^{\mp \sigma_i a} \eta \big), \qquad h'_\theta(a) \widehat{x}^{ \prime}_{\pm\theta}(\eta) {h'_\theta(a)}^{-1} = \widehat{x}^{ \prime}_{\pm\theta}(\eta),\\
 x'_{2\vep_i}(a) \widehat{x}^{ \prime}_{\beta_j}(\eta) {x'_{2\vep_i}(a)}^{-1} = \widehat{x}^{ \prime}_{\beta_j}(\eta) \widehat{x}^{ \prime}_{\theta}( \delta_{i,j} a \eta ),\\
 x'_{2\vep_i}(a) \widehat{x}^{ \prime}_{-\beta_j}(\eta) {x'_{2\vep_i}(a)}^{-1} = \widehat{x}^{ \prime}_{-\beta_j}(\eta) \widehat{x}^{ \prime}_{\beta_k}( (1-\delta_{i,j}) a \eta ),\\
 x'_{-2\vep_i}(a) \widehat{x}^{ \prime}_{\beta_j}(\eta) {x'_{-2\vep_i}(a)}^{-1} = \widehat{x}^{ \prime}_{\beta_j}(\eta) \widehat{x}^{ \prime}_{-\beta_k}( (1-\delta_{i,j}) a \eta ),\\
 x'_{-2\vep_i}(a) \widehat{x}^{ \prime}_{-\beta_j}(\eta) {x'_{-2\vep_i}(a)}^{-1} = \widehat{x}^{ \prime}_{-\beta_j}(\eta) \widehat{x}^{ \prime}_{-\theta}( \delta_{i,j} a \eta ),\\
 x'_{2\vep_i}(a) \widehat{x}^{ \prime}_{\theta}(\eta) {x'_{2\vep_i}(a)}^{-1} = \widehat{x}^{ \prime}_{\theta}(\eta),\\
 x'_{2\vep_i}(a) \widehat{x}^{ \prime}_{-\theta}(\eta) {x'_{2\vep_i}(a)}^{-1} = \widehat{x}^{ \prime}_{-\theta}(\eta) \widehat{x}^{ \prime}_{-\beta_i}( a \eta ),\\
 x'_{-2\vep_i}(a) \widehat{x}^{ \prime}_{\theta}(\eta) {x'_{-2\vep_i}(a)}^{-1} = \widehat{x}^{ \prime}_{\theta}(\eta) \widehat{x}^{ \prime}_{\beta_i}( a \eta ),\\
 x'_{-2\vep_i}(a) \widehat{x}^{ \prime}_{-\theta}(\eta) {x'_{-2\vep_i}(a)}^{-1} = \widehat{x}^{ \prime}_{-\theta}(\eta),\\
 \widehat{x}^{ \prime}_{\beta_i}(\eta_i) \widehat{x}^{ \prime}_{\beta_j}(\eta'_j) = x'_{2\vep_k}\big((1-\delta_{i,j}) \tau_\bsigma^2 \sigma_i \eta'_j \eta_i\big) \widehat{x}^{ \prime}_{\beta_j}(\eta'_j) \widehat{x}^{ \prime}_{\beta_i}(\eta_i),\\
 \widehat{x}^{ \prime}_{-\beta_i}(\eta_i) \widehat{x}^{ \prime}_{-\beta_j}(\eta'_j) = x'_{-2\vep_k}\big( {-}(1-\delta_{i,j}) \tau_\bsigma^2 \sigma_i \eta'_j \eta_i\big) \widehat{x}^{ \prime}_{-\beta_j}(\eta'_j) \widehat{x}^{ \prime}_{-\beta_i}(\eta_i),\\
 \widehat{x}^{ \prime}_{\beta_i}(\eta_i) \widehat{x}^{ \prime}_{-\beta_j}(\eta'_j) = h'_{2\vep_i}\big( \delta_{i,j} \tau_\bsigma^2 \sigma_i \eta'_j \eta_i \big) h'_{\theta}\big( {-} \delta_{i,j} \tau_\bsigma^2 \eta'_j \eta_i \big) \widehat{x}^{ \prime}_{-\beta_j}(\eta'_j) \widehat{x}^{ \prime}_{\beta_i}(\eta_i),\\
 \widehat{x}^{ \prime}_{\beta_i}(\eta_i) \widehat{x}^{ \prime}_{\theta}(\eta) = \widehat{x}^{ \prime}_{\theta}(\eta) \widehat{x}^{ \prime}_{\beta_i}(\eta_i), \qquad \widehat{x}^{ \prime}_{\beta_i}(\eta_i) \widehat{x}^{ \prime}_{-\theta}(\eta) = x'_{-2\vep_i}\big( \tau_\bsigma^2 \sigma_i \eta \eta_i \big) \widehat{x}^{ \prime}_{-\theta}(\eta) \widehat{x}^{ \prime}_{+\beta_i}(\eta_i),\\
 \widehat{x}^{ \prime}_{-\beta_i}(\eta_i) \widehat{x}^{ \prime}_{\theta}(\eta) = x'_{2\vep_i}\big( {-} \tau_\bsigma^2 \sigma_i \eta \eta_i \big) \widehat{x}^{ \prime}_{\theta}(\eta) \widehat{x}^{ \prime}_{-\beta_i}(\eta_i), \quad \widehat{x}^{ \prime}_{-\beta_i}(\eta_i) \widehat{x}^{ \prime}_{-\theta}(\eta) = \widehat{x}^{ \prime}_{-\theta}(\eta) \widehat{x}^{ \prime}_{-\beta_i}(\eta_i),\\
 \widehat{x}^{ \prime}_{\theta}(\eta_+) \widehat{x}^{ \prime}_{-\theta}(\eta_-) = h'_{\theta}\big( \tau_\bsigma^2 \eta_- \eta_+ \big) \widehat{x}^{ \prime}_{-\theta}(\eta_-) \widehat{x}^{ \prime}_{\theta}(\eta_+),\\
 \widehat{x}^{ \prime}_{\pm\beta_i}(\eta') \widehat{x}^{ \prime}_{\pm\beta_i}(\eta'') = \widehat{x}^{ \prime}_{\pm\beta_i}(\eta'+\eta''), \qquad \widehat{x}^{ \prime}_{\pm\theta}(\eta') \widehat{x}^{ \prime}_{\pm\theta}(\eta'') = \widehat{x}^{ \prime}_{\pm\theta}(\eta'+\eta'')
\end{gather*}
 with $ \{i,j,k\} \in \{1,2,3\} $.
\end{free text}

\begin{free text} \label{sing-special-Ghat'_s} {\bf Singular specializations of the supergroup(s) $\boldsymbol{\widehat{\bG}^{ \prime}_\bsigma}$.} From the very construction of the supergroups $ \widehat{\bG}^{ \prime}_\bsigma $ we get
\begin{gather*}
\text{\it $ \widehat{\bG}^{ \prime}_\bsigma $ is simple $($as a Lie supergroup$)$ for all $ \bsigma = (\sigma_1,\sigma_2,\sigma_3) \in V^\times $.}
\end{gather*}
 This follows from the presentation of $ \widehat{\bG}^{ \prime}_\bsigma $ in Section~\ref{pres-Ghat'_s} above, but also as direct consequence of the relation $ \operatorname{Lie} (\widehat{\bG}^{ \prime}_\bsigma ) = \widehat{\fg}^{ \prime}(\bsigma) = \widehat{\fg}^{ \prime}_\bsigma $ and of $ \widehat{\fg}^{ \prime}_\bsigma \cong \fg_\bsigma $ when $ \sigma_i \not= 0 $ for all $ i $.
\end{free text}

At ``singular values'' of $ \bsigma $ instead things are quite different; the precise claim is as follows:

\begin{thm} \label{thm_Ghat'-spec} Given $ \bsigma \in V $, keep notation as above.
\begin{enumerate}\itemsep=0pt
\item[$(1)$] If $ \bsigma \in V^\times $, then the Lie supergroup $ \widehat{\bG}^{ \prime}_\bsigma $ is simple.
\item[$(2)$] If $ \bsigma \in V \setminus V^\times $, let $( \widehat{\bG}^{ \prime}_\bsigma)_\one $ be the subsupergroup of $ \widehat{\bG}^{ \prime}_\bsigma $ generated by the~$ \widehat{x}^{ \prime}_{\pm \theta} $'s and the~$ \widehat{x}^{ \prime}_{\pm \beta_i} $'s $($for all~$ i )$. Then $( \widehat{\bG}^{ \prime}_\bsigma)_\one $ is Abelian and normal in $ \widehat{\bG}^{ \prime}_\bsigma $, and there exist isomorphisms $(\widehat{\bG}^{ \prime}_\bsigma)_{\rm rd} \cong {\text{\rm SL}(2)}^{\times 3} $ and $( \widehat{\bG}^{ \prime}_\bsigma)_\one \cong \boxtimes_{ i=1}^{ 3} \Box_i $ as a $( \widehat{\bG}^{ \prime}_\bsigma)_{\rm rd} $-module; in particular, $( \widehat{\bG}^{ \prime}_\bsigma)_\one $ is Abelian; moreover, there exists an isomorphism
\begin{gather*} \widehat{\bG}^{ \prime}_\bsigma \cong \big( \widehat{\bG}^{ \prime}_\bsigma \big)_{\rm rd} \ltimes \big( \widehat{\bG}^{ \prime}_\bsigma \big)_\one \cong {\text{\rm SL}(2)}^{\times 3} \ltimes \Box^{ \boxtimes 3}.
\end{gather*}
In other words, there exists a {\it split} short exact sequence
\begin{gather*} \boldsymbol{1} \relbar\joinrel\relbar\joinrel\relbar\joinrel\relbar\joinrel\longrightarrow \Box^{ \boxtimes 3} \cong \big( \widehat{\bG}^{ \prime}_\bsigma \big)_\one \relbar\joinrel\relbar\joinrel\relbar\joinrel\relbar\joinrel\longrightarrow \widehat{\bG}^{ \prime}_\bsigma {\buildrel
{{\displaystyle \dashleftarrow \hskip-4pt \text{$-$} \text{$-$} \text{$-$}}} \over {\relbar\joinrel\relbar\joinrel\relbar\joinrel\relbar\joinrel\longrightarrow}} \big( \widehat{\bG}^{ \prime}_\bsigma \big)_{\rm rd} \cong {\text{\rm SL}(2)}^{\times 3} \relbar\joinrel\relbar\joinrel\relbar\joinrel\relbar\joinrel\longrightarrow \boldsymbol{1}.
\end{gather*}
\end{enumerate}
\end{thm}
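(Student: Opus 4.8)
The plan is to reduce everything to the companion Lie-superalgebra result, Theorem~\ref{thm_g'_eta-spec}, via the super Harish-Chandra pair equivalence of categories $ \Phi $, $ \Psi $ from Section~\ref{sHCp's->Liesgrp's}. By the very construction in Section~\ref{subsect: LieSGroups Ghat'_s} one has $ \operatorname{Lie}\big( \widehat{\bG}^{ \prime}_\bsigma \big) = \widehat{\fg}^{ \prime}(\bsigma) $ and $ \big( \widehat{\bG}^{ \prime}_\bsigma \big)_{\rm rd} = \widehat{G}^{ \prime} = \exp\big( {\widehat{\fg}^{ \prime}(\bsigma)}_\zero \big) $. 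Since Theorem~\ref{thm_g'_eta-spec}(2) gives $ {\widehat{\fg}^{ \prime}(\bsigma)}_\zero \cong {\fsl(2)}^{\oplus 3} $ \emph{uniformly} for every $ \bsigma \in V $, the underlying Lie group is $ {\text{\rm SL}(2)}^{\times 3} $ for all $ \bsigma $ at once. This uniformity of the even part is exactly the structural reason why, in contrast with the previous theorems, no separate treatment of the point $ \bsigma = \boldsymbol{0} $ is required here.

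For part~(1), when $ \bsigma \in V^\times $ we have $ \tau_\bsigma := \sigma_1 \sigma_2 \sigma_3 \neq 0 $, so $ \widehat{\fg}^{ \prime}(\bsigma) \cong \fg_\bsigma $ is simple by Theorem~\ref{thm_g'_eta-spec}(1). Under the equivalence, a closed connected normal Lie subsupergroup of $ \widehat{\bG}^{ \prime}_\bsigma $ corresponds to a sub-pair $ ( H, \fh ) $ with $ \fh \trianglelefteq \widehat{\fg}^{ \prime}(\bsigma) $ a $ \widehat{G}^{ \prime} $-stable Lie ideal and $ \operatorname{Lie}(H) = \fh_\zero $. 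Simplicity of $ \widehat{\fg}^{ \prime}(\bsigma) $ forces $ \fh \in \{ 0, \widehat{\fg}^{ \prime}(\bsigma) \} $; the case $ \fh = 0 $ gives a discrete, hence trivial, connected subsupergroup, while $ \fh = \widehat{\fg}^{ \prime}(\bsigma) $ forces $ \operatorname{Lie}(H) = {\widehat{\fg}^{ \prime}(\bsigma)}_\zero $ and thus $ H = \widehat{G}^{ \prime} $ by connectedness. Hence $ \widehat{\bG}^{ \prime}_\bsigma $ is simple (alternatively, one reads off the relations of Section~\ref{pres-Ghat'_s} and invokes Proposition~\ref{prop: properties-g_sigma}).

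For part~(2) I would argue directly from the presentation in Section~\ref{pres-Ghat'_s}, using that $ \bsigma \in V \setminus V^\times $ forces $ \tau_\bsigma = 0 $. First, every commutation relation among the odd generators $ \widehat{x}^{ \prime}_{\pm\theta} $, $ \widehat{x}^{ \prime}_{\pm\beta_i} $ carries in its ``correction'' term a factor $ \tau_\bsigma^2 $; since $ \tau_\bsigma = 0 $ these corrections collapse to the identity, so $ \big( \widehat{\bG}^{ \prime}_\bsigma \big)_\one $ is Abelian. Second, the conjugation (``action'') relations of $ \widehat{x}^{ \prime} $ by $ h' $ and $ x' $ carry no $ \tau_\bsigma $ factor and return odd generators, so $ \big( \widehat{\bG}^{ \prime}_\bsigma \big)_\one $ is normalized by $ \widehat{G}^{ \prime}(A) = \big( \widehat{\bG}^{ \prime}_\bsigma \big)_{\rm rd}(A) $ for every $ A \in \Wsalg $, hence is normal. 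The $ \widehat{G}^{ \prime} $-module structure on $ \big( \widehat{\bG}^{ \prime}_\bsigma \big)_\one $ integrates the adjoint $ {\fsl(2)}^{\oplus 3} $-action on $ {\widehat{\fg}^{ \prime}(\bsigma)}_\one \cong {\boxtimes}_{i=1}^3 \Box_i $ supplied by Theorem~\ref{thm_g'_eta-spec}(2), whence $ \big( \widehat{\bG}^{ \prime}_\bsigma \big)_\one \cong \Box^{ \boxtimes 3} $. Finally, the canonical section $ \widehat{G}^{ \prime} \lhook\joinrel\longrightarrow \widehat{\bG}^{ \prime}_\bsigma $ induced by the unit embeddings $ u_A \colon \C \lhook\joinrel\longrightarrow A $ (cf.\ Section~\ref{Lie supergroups - functors}) splits the resulting short exact sequence, yielding $ \widehat{\bG}^{ \prime}_\bsigma \cong \big( \widehat{\bG}^{ \prime}_\bsigma \big)_{\rm rd} \ltimes \big( \widehat{\bG}^{ \prime}_\bsigma \big)_\one $.

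The main obstacle I anticipate is the faithful transfer of the \emph{algebraic} semidirect decomposition of $ \widehat{\fg}^{ \prime}(\bsigma) $ to a decomposition of the \emph{group-valued functor} $ A \mapsto \widehat{\bG}^{ \prime}_\bsigma(A) $ that is natural in $ A \in \Wsalg $, together with the correct reading of ``simple'' across the equivalence in part~(1). Both difficulties are defused by the explicit presentation of Section~\ref{pres-Ghat'_s}: the $ \tau_\bsigma^2 $-bookkeeping in the odd-odd relations is the only nonformal input, and once it is recorded the Abelian, normal, module and semidirect-product assertions follow by inspection uniformly in $ A $, exactly paralleling the proofs of Theorems~\ref{thm_G-spec}, \ref{thm_G'-spec}, \ref{thm_G''-spec} and~\ref{thm_Ghat-spec}.
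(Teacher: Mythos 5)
Your proposal is correct and takes essentially the same approach as the paper: the paper's own proof is a one-line reduction to exactly the two routes you develop, namely the explicit presentation of Section~\ref{pres-Ghat'_s} (your $\tau_\bsigma^{2}$-bookkeeping, which yields Abelianness, normality and the splitting in part~(2)) and the relation $\operatorname{Lie}\big(\widehat{\bG}^{\prime}_\bsigma\big) = \widehat{\fg}^{\prime}(\bsigma)$ combined with Theorem~\ref{thm_g'_eta-spec} (your part~(1) and the identification of $\big(\widehat{\bG}^{\prime}_\bsigma\big)_\one$ as the module $\Box^{\boxtimes 3}$). You merely fill in the details the paper leaves implicit, at the same level of rigor as its companion proofs of Theorems~\ref{thm_G-spec}--\ref{thm_Ghat-spec}.
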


\begin{proof} Much like the similar result for $ \widehat{\bG}_\bsigma $, we can deduce the present claim from the presentation of $ \widehat{\bG}^{ \prime}_\bsigma $ in Section~\ref{pres-Ghat'_s}, or from the relation $ \operatorname{Lie}( \widehat{\bG}^{ \prime}_\bsigma) = \widehat{\fg}^{ \prime}(\bsigma) $ along with Theo\-rem~\ref{thm_g'_eta-spec}.
\end{proof}

\begin{free text} \label{int-case_G_s-alg-sgrp} {\bf The integral case: $\boldsymbol{\bG_\bsigma}$, $\boldsymbol{\bG'_\bsigma}$, $\boldsymbol{\bG''_\bsigma}$, $\boldsymbol{\widehat{\bG}_\bsigma}$ and $\boldsymbol{\widehat{\bG}^{ \prime}_\bsigma}$ as algebraic supergroups.} In the integral case, i.e., when $ \bsigma \in \Z^3 $, the Lie supergroups we have introduced above are, in fact, complex {\it algebraic} supergroups: indeed, this follows as a consequence of an alternative presentation of them, that makes sense if and only if~$ \bsigma \in \Z^3 $.

 Let us look at $ \bG_\bsigma $, for some fixed $ \bsigma \in \Z^3 $. Consider the generating set \eqref{eq: gen-set G_+} for the groups $ \varGamma_\zero(A) $, and for each $ \alpha \in \{ 2 \varepsilon_1, 2 \varepsilon_2, 2 \varepsilon_3, \theta \} $ replace the generators $ h_\alpha(a) := \exp ( a H_\alpha) $~-- for all $ a \in A_\zero $~-- therein with $ \widetilde{h}_\alpha(u) $~-- for all $ u \in U(A_\zero) $, the group of units of $ A_\zero $. Every such $ \widetilde{h}_\alpha(u) $ is the toral element in $ G(A) $ whose adjoint action on $ \fg_\bsigma $ is given by $ \operatorname{Ad} ( \widetilde{h}_\alpha(u) )(X_\gamma) = u^{\gamma(H_\alpha)} X_\gamma $ for all $ \gamma \in \Delta $; note that this makes sense, since we have $ \gamma(H_\alpha) \in \Z $ just because $ \bsigma \in \Z^3 $. Now, the set
\begin{gather*}
 \widetilde{\varGamma}_\zero(A) := \big\{ x_{2\varepsilon_i}(a), \widetilde{h}_{2\varepsilon_i}(u), \widetilde{h}_\theta(u), x_{-2\varepsilon_i}(a) \,|\, i \in \{1,2,3\}, a \in A_\zero, u \in U(A_\zero) \big\}
 \end{gather*}
 still generates $ G(A) $. A moment's thought then shows that $ \bG_\bsigma(A) $ can be realized as the group generated by $ \widetilde{\varGamma}(A) := G(A) \cup \varGamma_\one(A) $ with the same relations as in Section~\ref{pres-G_s} up to the following changes: all relations that involve no generators of type $ h_\alpha(a) $ are kept the same, while the others are replaced by the following ones (with $ \{i,j,k\} \in \{1,2,3\} $):
\begin{gather*}
 \widetilde{h}_{2\vep_i}(u) x_{\pm\beta_j}(\eta) {\widetilde{h}_{2\vep_i}(u)}^{-1} = x_{\pm\beta_j}\big( u^{\pm {(-1)}^{-\delta_{i,j}} \sigma_i} \eta\big),\\
 \widetilde{h}_{2\vep_i}(u) x_{\pm\theta}(\eta) {\widetilde{h}_{2\vep_i}(u)}^{-1} = x_{\pm\theta}\big( u^{\pm \sigma_i} \eta\big),\\
 \widetilde{h}_\theta(u) x_{\pm\beta_i}(\eta) {\widetilde{h}_\theta(u)}^{-1} = x_{\pm\beta_i}\big( u^{\mp \sigma_i} \eta\big), \qquad \widetilde{h}_\theta(u) x_{\pm\theta}(\eta) {\widetilde{h}_\theta(u)}^{-1} = x_{\pm\theta}(\eta),\\
 x_{\theta}(\eta_+) x_{-\theta}(\eta_-) = \widetilde{h}_\theta( \eta_- \eta_+ ) x_{-\theta}(\eta_-) x_{\theta}(\eta_+).
 \end{gather*}
In fact, the key point here is that if (and only if) $ \bsigma \in \Z^3 $, then all our construction does make sense in the framework of algebraic supergeometry, namely $ \cP_{\bsigma} := ( G, \fgs) $ is a super Harish-Chandra pair in the algebraic sense~-- like in \cite{Ga2}~-- and $ \bG_\bsigma := \bG_{{}_{\cP_{{}_{\bsigma}}}} $ is nothing but the corresponding algebraic supergroup associated with $ \cP_{\bsigma} $ trough the algebraic version of category equivalence in Section~\ref{sHCp's->Liesgrp's}~-- cf.~\cite{Ga2} again. If we present the groups $ G(A) $ using $ \widetilde{\varGamma}_\zero(A) $ as generating set, we can also extend such a description~-- as $ \bsigma \in \Z^3 $~-- to a presentation of the groups $ \bG_\bsigma(A) $ as above. Leaving details to the reader, the same analysis applies when we look at $ \bG'_\bsigma $, $ \bG''_\bsigma $, $ \widehat{\bG}_\bsigma $ or~$ \widehat{\bG}^{ \prime}_\bsigma $ instead of~$ \bG_\bsigma $: whenever $ \bsigma \in \Z^3 $, all of them are in fact complex {\it algebraic} supergroups.
\end{free text}

\begin{free text} \label{subsubsect: compar-fam.s - grps} {\bf A geometrical interpretation.} In the previous discussion we considered five families of Lie supergroups indexed by the points of $ V $, namely $\{ \bG_\bsigma\}_{\sigma \in V} $, $\{ \bG'_\bsigma \}_{\sigma \in V} $, $\{ \bG''_\bsigma\}_{\sigma \in V} $, $\{ \widehat{\bG}_\bsigma\}_{\sigma \in V} $ and $\{ \widehat{\bG}^{ \prime}_\bsigma\}_{\bsigma \in V} $. Our analysis shows that these families have in common all the elements indexed by ``general'' points, i.e., elements $ \bsigma \in V^\times := V \setminus \cS $ with $ \cS := \bigcup\limits_{i=1}^3 \{ \sigma_i = 0 \} $. On the other hand, these families are entirely different at all points in the ``singular locus''~$\cS$.

In geometrical terms, each family forms a fibre space, say $ \mathbb{L}_{ \bG_\Cbx} $, $ \mathbb{L}_{ \bG'_\Cbx} $, $ \mathbb{L}_{ \bG''_\Cbx} $, $ \mathbb{L}_{ \widehat{\bG}_\Cbx} $, and~$ \mathbb{L}_{ \widehat{\bG}'_\Cbx} $, respectively, over the base space $ \operatorname{Spec} (\Cbx) \cong V \cup \{\star\}$ $ \big( {\cong} \mathbb{A}_\C^{ 2} \cup \{\star\} \big) $, whose fibres are Lie supergroups. Our results show that the fibres in any two of these fibre spaces do coincide at general points~-- where they are {\it simple} Lie supergroups~-- and do differ instead at singular points~-- where they are non-simple indeed.

As an outcome, loosely speaking we can say that {\it our construction provides {\it five different} ``completions'' of the family $\{\bG_\bsigma\}_{\sigma \in V \setminus \cS} $ of simple Lie supergroups, by adding~-- in five different ways~-- some new {\it non-simple} extra elements}. Note also that, a priori, many other such ``completions'', more or less similar, could be devised: we just presented these ones as significant, interesting examples, with no claims whatsoever of being exhaustive.
\end{free text}

\subsection*{Acknowledgements}
The first author is partially supported by the French {\it Agence Nationale de la Recherche $($ANR GeoLie project ANR-15-CE40-0012$)$}. The second author acknowledges the MIUR {\it Excellence Department Project} awarded to the Department of Mathematics, University of Rome ``Tor Vergata'', CUP E83C18000100006. The authors also would like to thank the anonymous referees for their useful comments and suggestions to improve the presentation of this article.

\LastPageEnding

\end{document}